\theoremstyle{plain}
\newtheorem{theorem}{Theorem}[section]
\newtheorem{proposition}[theorem]{Proposition}
\newtheorem{lemma}[theorem]{Lemma}
\newtheorem{corollary}[theorem]{Corollary}
\theoremstyle{definition}
\newtheorem{definition}[theorem]{\it Definition}
\newtheorem{algorithm}[theorem]{\it Algorithm}
\newtheorem{example}[theorem]{\it Example}
\theoremstyle{remark}
\newtheorem{remark}[theorem]{Remark}
\begin{document}

\title{Monotonicity equivalence and synchronizability for a system of probability distributions}

\author{
Motoya Machida \\
{\normalsize Department of Mathematics, Tennessee Technological University} \\
{\normalsize Cookeville, TN 38505, USA}
}

\date{\today}

\maketitle

\begin{abstract}
A system $(P_\alpha: \alpha\in\mathcal{A})$ of probability distributions
on a partially ordered set (poset) $\mathcal{S}$ indexed by another poset
$\mathcal{A}$
can be realized by a system of
$\mathcal{S}$-valued random variables $X_\alpha$'s marginally distributed as
$P_\alpha$.
It is called realizably monotone if
$X_\alpha\le X_\beta$ in $\mathcal{S}$ whenever $\alpha\le\beta$ in $\mathcal{A}$.
Such a system necessarily is stochastically monotone,
that is, it satisfies $P_\alpha\preceq P_\beta$ in stochastic ordering
whenever $\alpha \le \beta$.
It has been known exactly when these notions of monotonicity are equivalent
except for a certain subclass of acyclic posets, called Class~W.
In this paper we introduce inverse probability transforms
and synchronizing bijections recursively
when $\mathcal{S}$ is a poset of Class~W and $\mathcal{A}$ is synchronizable,
and validate monotonicity equivalence by
constructing $(X_\alpha: \alpha\in\mathcal{A})$ explicitly.
We also show that synchronizability is necessary for monotonicity
equivalence when $\mathcal{S}$ is in Class~W.

\medskip\par\noindent
{\em AMS\/} 2020 {\em subject classifications.\/}
Primary 60E05; 
secondary 06A06, 05C05, 05C38.



\medskip\par\noindent
{\em Keywords:\/}
Realizable monotonicity, stochastic monotonicity, synchronizability,
monotonicity equivalence,
partially ordered set,
recursive inverse transforms, recursive synchronizing bijections.
\end{abstract}

\section{Introduction}
\setcounter{equation}{0}\setcounter{figure}{0}

We express a finite partially ordered set (poset) by a calligraphic letter $\mathcal{S}$,
and the corresponding Hasse diagram by a plain uppercase letter $S$.
A subset $U$ of $\mathcal{S}$ is called an up-set if $x\in U$ and $x\le y$ in
$\mathcal{S}$ imply $y\in U$. A down-set $V$ of $\mathcal{S}$ is dually
defined, that is, it is an up-set of the dual $\mathcal{S}^*$
(which is equipped with the reversed ordering).
Throughout the present paper we view the Hasse diagram as an undirected graph,
and assume that it is connected.
We represent a graph $S = (V,E)$ with vertex set $V$ and edge set $E$,
but simply write $v\in S$ and $e\in S$
respectively for $v\in V$ and $e\in E$ if $v$ or $e$ is clearly identified as a vertex or an edge.
If $S$ is the induced subgraph of some supergraph $G$
and $v$ is an vertex of $G$ then
we define the subgraph $S-v$
of $S$ restricted on the vertex set $V\setminus\{v\}$,
and the supergraph $S+v$ by adding all the edges from $v$ to $S$ in $G$.
Similarly for any edge $e=\{a,b\}\in G$
we define the subgraph $S-e$ by $(V,E\setminus\{e\})$,
and the supergraph $S+e$ by $(V\cup\{a,b\},E\cup\{e\})$.

We say that $\mathcal{S}$ is a poset of Class~W
if (i) $S$ is a tree (i.e., an acyclic and connected graph)
and (ii) $\mathcal{S}$ has no induced Y-poset
[Figure~\ref{named.poset}(a)].
A poset $\mathcal{S}$ of Class~W can contain
$\mathrm{W}^{\star}$-posets or $\mathrm{W}_{\star}$-posets
[Figure~\ref{named.poset}(b)--(c)]
as induced subposets,
and it is called a poset of Class W$^{\star}$ if it contains only
$\mathrm{W}^{\star}$-posets.
We can dually characterize Class W$_{\star}$ by
$\mathrm{W}_{\star}$-posets.
Furthermore, we call it an up-down poset
if there is no induced $\mathrm{W}^{\star}$-poset nor $\mathrm{W}_{\star}$-poset
(cf. \cite{gansner1982}).
In Algorithm~\ref{rp.tree} we identify
a Hasse diagram $S$ of tree with
a rooted tree $(S,\tau)$ started from a leaf $\tau\in S$,
and generate subtrees $(S^{(\kappa)},u_1^{(\kappa)})$, $\kappa\in K$, recursively
along with rooted plane tree $K$ of indices.
In Definition~\ref{classw}
we formally characterize Class~W in terms of
subtrees $(S^{(\kappa)},u_1^{(\kappa)})$'s by Algorithm~\ref{rp.tree}.

\usetikzlibrary{positioning}
\tikzset{mynode/.style={draw,circle,inner sep=1.5pt,outer sep=0pt}}

\begin{figure}[h]\begin{center}
\begin{tabular}{ccc}
\begin{tikzpicture}
  \node [mynode] (w) at (0,0) {};
  \node [mynode] (z) at (0,-1) {};
  \node [mynode] (x) at (-1,-2) {};
  \node [mynode] (y) at ( 1,-2) {};
  \draw[thick]
  (w) -- (z)
  (z) -- (x)
  (z) -- (y);
  \node [mynode] (w) at (3,-2) {};
  \node [mynode] (z) at (3,-1) {};
  \node [mynode] (x) at (2, 0) {};
  \node [mynode] (y) at (4, 0) {};
  \draw[thick]
  (w) -- (z)
  (z) -- (x)
  (z) -- (y);
\end{tikzpicture}
&
\begin{tikzpicture}
  \node [mynode] (w) at ( 0,-1) {};
  \node [mynode] (x) at (-1, 0) {};
  \node [mynode] (y) at ( 0, 0) {};
  \node [mynode] (z) at ( 1, 0) {};
  \draw[thick]
  (w) -- (x)
  (w) -- (y)
  (w) -- (z);
\end{tikzpicture}
&
\begin{tikzpicture}
  \node [mynode] (w) at (0,0) {};
  \node [mynode] (x) at (-1,-1) {};
  \node [mynode] (y) at ( 0,-1) {};
  \node [mynode] (z) at ( 1,-1) {};
  \draw[thick]
  (w) -- (x)
  (w) -- (y)
  (w) -- (z);
\end{tikzpicture}
\\
(a) Y-posets
&
(b) $\mathrm{W}^{\star}$-poset
&
(c) $\mathrm{W}_{\star}$-poset
\end{tabular}
\caption{
  Two types of acyclic Hasse diagram.
}\label{named.poset}
\end{center}\end{figure}
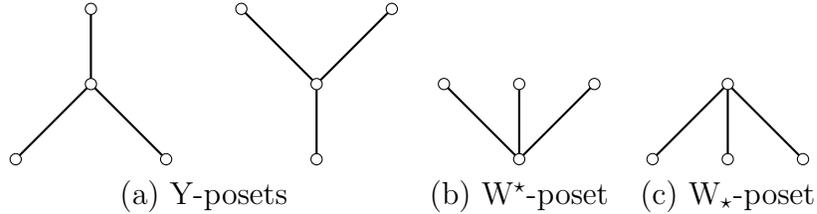

For probability measures $P$ and $P'$ on a poset $\mathcal{S}$,
we say that $P$ is stochastically smaller than $P'$, denoted by
$P\preceq P'$, if $P(U)\le P'(U)$ for every up-set $U$ of $\mathcal{S}$.
An important characterization of stochastic ordering was established
by Strassen~\cite{strassen} and further studied by Kamae et al.~\cite{kko}.
They showed that the stochastic ordering is necessary and sufficient
for the existence of a pair of $S$-valued random variables
$X$ and $X'$ distributed respectively as $P$ and $P'$
and satisfying $X\le X'$ in $\mathcal{S}$.
We can consider another poset $\mathcal{A}$,
and produce a system $(P_\gamma:\gamma\in\mathcal{A})$ of probability
measures on $\mathcal{S}$.
Then it is called \emph{stochastically monotone}
if $P_\alpha\preceq P_\beta$
whenever $\alpha < \beta$ in $\mathcal{A}$.
It is called \emph{realizably monotone} if there is a system
$(X_\gamma:\gamma\in\mathcal{A})$ of $\mathcal{S}$-valued random
variables marginally distributed as $P_\gamma$ for each $\gamma\in\mathcal{A}$
and satisfying $X_\alpha\le X_\beta$ in $\mathcal{S}$ whenever $\alpha<\beta$ in $\mathcal{A}$.
The two notions of monotonicity are not equivalent in general.
If they are equivalent for a pair
$(\mathcal{A},\mathcal{S})$ of particular posets
then we say that monotonicity equivalence holds.
In~\cite{fm2001,fm2002} we identified such pairs completely
except for the case when
$\mathcal{S}$ is in Class~W.

If $\mathcal{S}$ is an up-down poset
then the corresponding rooted tree $(S,\tau)$
is equipped with the linear ordering, denoted by $\le_{\tau}$,
with the maximum element $\tau$.
We can introduce the probability distribution function
$F_\gamma$ from $S$ to $[0,1]$ by
\begin{equation*}
  F_\gamma(x)=P_\gamma(\{z\in S: z\le_{\tau}x\})
\end{equation*}
and the inverse transform $X_\gamma$ from $[0,1)$ to $S$ by
\begin{equation*}
  X_\gamma(\omega)
  = \bigwedge\nolimits_{\tau}\{x\in S: \omega<F_\gamma(x)\}
\end{equation*}
which takes the minimum in terms of the linear ordering $\le_{\tau}$.
We set a probability space $(\lambda,\mathcal{B}([0,1)),[0,1))$
by the standard Lebesgue measure $\lambda$.
Then $X_\gamma$ can be viewed as an $S$-valued random variable,
and distributed as $P_\gamma$.
If $(P_\gamma:\gamma\in\mathcal{A})$ is stochastically monotone
then the system
$(X_\gamma:\gamma\in\mathcal{A})$ realizes the monotonicity;
thus, monotonicity equivalence holds when $\mathcal{S}$ is an up-down poset.
This assertion will be established
for the case (iv) of Theorem~\ref{me.bounded}.
In Section~\ref{me.sec} we further generalize an inverse transform
$X_{\mu,\gamma}$ when $\mathcal{S}$ is a poset of Class~W,
provided that there exists a distribution function $\mu$ on $K$
interlaced with $F_\gamma$.
Such distribution function $\mu$ is readily found when $\mathcal{A}$
has the minimum and the maximum element
by Lemma~\ref{interlace.lem},
resulting in the first sufficient condition
[i.e., the case (i) of Theorem~\ref{me.bounded}]
for monotonicity equivalence.

In Section~\ref{sync.sec} we extend sufficient conditions for monotonicity equivalence.
For a poset $\mathcal{A}$
we denote by $D_{\mathcal{A}}$ the collection of all the minimal
elements in $\mathcal{A}$.
We can introduce a connected graph
$G_{\mathcal{A}} = (D_{\mathcal{A}},E_{\mathcal{A}})$
with vertex set $D_{\mathcal{A}}$
and edge set $E_{\mathcal{A}}$
where $(\beta,\beta')\in E_{\mathcal{A}}$
if and only if there is some $\alpha\in\mathcal{A}$
such that $\beta,\beta'<\alpha$ in $\mathcal{A}$.
We call $G_{\mathcal{A}}$ the graph of \emph{interlaced relation}
in the sense of Remark~\ref{interlace.rem}(b).
A spanning tree $T$ of $G_{\mathcal{A}}$ is said to be
\emph{locally connected} if the subgraph of $T$ restricted
on the vertex set
\begin{equation}\label{d.set}
  D_{\mathcal{A}}(\alpha) = \{\beta\in D_{\mathcal{A}}:
  \beta\le\alpha \mbox{ in $\mathcal{A}$}\}
\end{equation}
is connected (i.e., it is a subtree) for each $\alpha\in\mathcal{A}$.
We call $\mathcal{A}$ \emph{synchronizable} for Class~$\mathrm{W}_{\star}$
if there is a locally connected spanning tree $T$ of $G_{\mathcal{A}}$.
Dually we can define a synchronizable poset for Class~$\mathrm{W}^{\star}$,
and we simply call a poset synchronizable
if it is synchronizable for both Class~$\mathrm{W}_{\star}$
and Class~$\mathrm{W}^{\star}$.

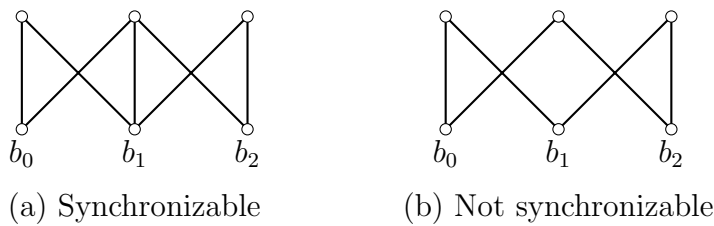
\begin{figure}[h]\begin{center}
\begin{tabular}{ccc}
\begin{tikzpicture}[xscale=1.5,yscale=1.5]
\draw[thick] (0,0) -- (0,1) -- (1,0) -- (2,1) -- (2,0) -- (1,1) -- (0,0);
\draw[thick] (1,0) -- (1,1);

\draw[fill=white] (0,0) circle [radius=0.05];
\node[below] at (0,0) {$b_0$};
\draw[fill=white] (1,0) circle [radius=0.05];
\node[below] at (1,0) {$b_1$};
\draw[fill=white] (2,0) circle [radius=0.05];
\node[below] at (2,0) {$b_2$};
\draw[fill=white] (0,1) circle [radius=0.05];
\draw[fill=white] (1,1) circle [radius=0.05];
\draw[fill=white] (2,1) circle [radius=0.05];
\end{tikzpicture}
&
\hspace{5ex}
&
\begin{tikzpicture}[xscale=1.5,yscale=1.5]
\draw[thick] (0,0) -- (0,1) -- (1,0) -- (2,1) -- (2,0) -- (1,1) -- (0,0);

\draw[fill=white] (0,0) circle [radius=0.05];
\node[below] at (0,0) {$b_0$};
\draw[fill=white] (1,0) circle [radius=0.05];
\node[below] at (1,0) {$b_1$};
\draw[fill=white] (2,0) circle [radius=0.05];
\node[below] at (2,0) {$b_2$};
\draw[fill=white] (0,1) circle [radius=0.05];
\draw[fill=white] (1,1) circle [radius=0.05];
\draw[fill=white] (2,1) circle [radius=0.05];
\end{tikzpicture}
\\
(a) Synchronizable
&
&
(b) Not synchronizable
\end{tabular}
\caption{
  Example and counter example of synchronizable poset
}\label{a.poset}
\end{center}\end{figure}

\begin{example}
Consider a poset $\mathcal{A}$ of Figure~\ref{a.poset}(a) or (b).
In either case we have the graph $G_{\mathcal{A}}$
with edge set
\begin{math}
  E_{\mathcal{A}}=\{\{b_0,b_1\},\{b_0,b_2\},\{b_1,b_2\}\} .
\end{math}
In (a) we can find a locally connected spanning tree $T$
with edges $\{b_0,b_1\}$ and $\{b_1,b_2\}$;
thus, $\mathcal{A}$ is synchronizable.
In (b) no spanning tree is locally connected, and therefore,
$\mathcal{A}$ is not synchronizable.
\end{example}

As examined in Section~\ref{me.proof.sec} and~\ref{rsb.proof.sec},
the inverse transform $X_{(\alpha,\beta),\gamma}$ is associated with
$(\alpha,\beta)\in D_{\mathcal{A}}\times D_{\mathcal{A}^*}$ and
$\gamma\in\mathcal{A}$.
In Section~\ref{rsb.sec} and~\ref{rsb.comp.sec}
we develop a well-defined measure-preserving bijection
$\Phi_{(\alpha,\beta)}$ when $\mathcal{A}$ is synchronizable.
In Section~\ref{rsb.proof.sec}
we show that an inverse transform $X_\gamma$ is well-defined
by $X_{(\alpha,\beta),\gamma}\circ\Phi_{(\alpha,\beta)}$,
and that the system $(X_\gamma:\gamma\in\mathcal{A})$ realizes the monotonicity;
thus, monotonicity equivalence holds when $\mathcal{A}$ is synchronizable.
In Section~\ref{wstar.sec} we conclude the proof of Theorem~\ref{me.sync}.
In Section~\ref{mi.sec}
we prove Proposition~\ref{main.claim},
claiming that the synchronizability of $\mathcal{A}$
is somewhat necessary for monotonicity equivalence
when $\mathcal{S}$ is in Class W.

\section{Monotonicity equivalence for Class W}\label{me.sec}
\setcounter{equation}{0}\setcounter{figure}{0}

In this section
we present a constructive proof for the following theorem
of monotonicity equivalence on a poset $\mathcal{S}$ of Class W.

\begin{theorem}\label{me.bounded}
Let $\mathcal{S}$ be a poset of Class W.
Then each of the following conditions sufficiently guarantees
monotonicity equivalence for $(\mathcal{A},\mathcal{S})$:
\begin{enumerate}
\renewcommand{\labelenumi}{(\roman{enumi})}
\item
$\mathcal{A}$ has the minimum $a_*$ and the maximum $a^*$;
\item
$\mathcal{A}$ has the minimum $a_*$
and $\mathcal{S}$ is in Class W$_{\star}$;
\item
$\mathcal{A}$ has the maximum $a^*$
and $\mathcal{S}$ is in Class W$^{\star}$;
\item
$\mathcal{S}$ is an up-down poset.
\end{enumerate}
\end{theorem}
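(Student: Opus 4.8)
The plan is to prove all four sufficiency claims by a single device: realize every $X_\gamma$ on the common Lebesgue probability space $([0,1),\mathcal{B}([0,1)),\lambda)$ through an explicit inverse-transform formula built from the distribution function $F_\gamma$, and then check just two things — that $X_\gamma$ pushes $\lambda$ forward to $P_\gamma$, and that $\alpha<\beta$ in $\mathcal{A}$ forces $X_\alpha(\omega)\le X_\beta(\omega)$ in $\mathcal{S}$ for every $\omega\in[0,1)$. Since stochastic monotonicity is necessary for realizable monotonicity (Strassen~\cite{strassen}, Kamae et al.~\cite{kko}), these two checks give the equivalence. Cases (i)--(iii) will all run off one master construction, the recursive inverse transform $X_{\mu,\gamma}$ attached to the subtree decomposition $(S^{(\kappa)},u_1^{(\kappa)})$, $\kappa\in K$, of Algorithm~\ref{rp.tree}, and the whole weight of those three cases lies in producing a single distribution function $\mu$ on the index tree $K$ that is interlaced with $F_\gamma$ simultaneously for all $\gamma$. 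Case (iv), where $\mathcal{S}$ is an up-down poset, is separate: there the one-dimensional transform $X_\gamma$ of the introduction already works for an arbitrary $\mathcal{A}$.

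For (iv) I would first record that an up-down poset, being a tree with no induced $\mathrm{Y}$-poset, $\mathrm{W}^{\star}$-poset, or $\mathrm{W}_{\star}$-poset, has no vertex of degree $\ge 3$ (those three are exactly the possible orientations of a degree-three claw), so its Hasse diagram $S$ is a path and $\le_{\tau}$ is just the traversal of that path from the end opposite $\tau$. With $F_\gamma$ and $X_\gamma$ as displayed in the introduction one gets $\{\omega:X_\gamma(\omega)=x\}=[\,F_\gamma(x^{-}),F_\gamma(x))$, where $x^{-}$ is the $\le_{\tau}$-predecessor of $x$; this yields the marginal at once. For monotonicity one observes that, for two $\le_{\tau}$-consecutive vertices $x^{-},x$, the stochastic inequality applied to the up-set of $\mathcal{S}$ they separate pins the relative lengths of the corresponding step-intervals for $\alpha$ and for $\beta$ in exactly the needed direction, and a short case split on whether the cover relation between $x^{-}$ and $x$ runs with or against $\le_{\tau}$ shows that $X_\alpha(\omega)$ and $X_\beta(\omega)$ are always $\mathcal{S}$-comparable the right way. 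This is the assertion the introduction defers to case (iv).

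For (i)--(iii) I would first prove the core implication in isolation: \emph{if} some distribution function $\mu$ on $K$ is interlaced with $F_\gamma$ for every $\gamma\in\mathcal{A}$, \emph{then} $X_{\mu,\gamma}\sim P_\gamma$ and $\alpha<\beta$ implies $X_{\mu,\alpha}\le X_{\mu,\beta}$ pointwise. The marginal part is layer-by-layer bookkeeping down the recursion of Algorithm~\ref{rp.tree}; the monotonicity part is an induction on recursion depth, the point being that $\mu$ is held fixed while $F_\alpha,F_\beta$ stay stochastically ordered, so the interlacing data handed to each subtree $S^{(\kappa)}$ remain nested and the inductive hypothesis applies there. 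Granting this, (i) is immediate: with $a_*$ and $a^*$ both present, $P_{a_*}\preceq P_\gamma\preceq P_{a^*}$ for all $\gamma$, and Lemma~\ref{interlace.lem} supplies a $\mu$ interlaced with every $F_\gamma$ (read off from $F_{a_*}$ and $F_{a^*}$). For (ii) only the minimum $a_*$ is needed: when $\mathcal{S}$ is in Class~$\mathrm{W}_{\star}$ the subtrees produced by Algorithm~\ref{rp.tree} are rooted at minimal-type vertices, so being interlaced with $F_\gamma$ degenerates to a one-sided (bounded-below) condition that is implied by being interlaced with the single smallest distribution $F_{a_*}$, hence holds for all $\gamma$ at once. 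Case (iii) is the order-dual statement, using $a^*$ and Class~$\mathrm{W}^{\star}$.

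The main obstacle is twofold. First, making the recursive transform $X_{\mu,\gamma}$ precise — well defined, measurable, with marginal $P_\gamma$ — requires unwinding Algorithm~\ref{rp.tree} carefully and choosing the definition of ``$\mu$ interlaced with $F_\gamma$'' to be exactly strong enough to legalize each recursive step; this is the technical heart of Section~\ref{me.sec}. Second, and subtler, is the structural fact behind (ii)--(iii): that the Class~$\mathrm{W}_{\star}$ (resp.\ Class~$\mathrm{W}^{\star}$) restriction genuinely collapses interlacing to a one-sided requirement, so that the extreme element of $\mathcal{A}$ that is missing there costs nothing. Once those are settled, the monotonicity verification is the routine parallel induction described above, and the equivalence follows because stochastic monotonicity comes for free.
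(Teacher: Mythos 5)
Your plan is essentially the paper's own proof: the same recursive inverse transform $X_{\mu,\gamma}$ over the subtree decomposition of Algorithm~\ref{rp.tree}, the same core implication (an interlaced $\mu$ common to all $F_\gamma$ yields realizable monotonicity, i.e., Proposition~\ref{map.prop} and Corollary~\ref{map.cor}), and the same choice of $\mu$ from $F_{a_*}$ and/or $F_{a^*}$ in cases (i)--(iii) via Lemma~\ref{interlace.lem}, with the Class~W$_{\star}$/W$^{\star}$ structure making the single extreme distribution suffice. Your separate path argument for (iv) is just the paper's degenerate case $K=\{1\}$, $\mu(1)=1$ (the base case of the induction), so the approaches coincide there as well.
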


\subsection{Rooted plane tree}
\label{rp.tree.sec}

Let $S$ be a tree, and let $\tau$ be a leaf of the tree $S$.
We can introduce a partial ordering for vertices $x,y\in S$,
denoted by $x\le_{\tau}y$, if the path from $\tau$ to $x$ on $S$
contains the path from $\tau$ to $y$.
We call the top element $\tau$ the \emph{root},
and the poset the \emph{rooted tree}, denoted by $(S,\tau)$.
In Algorithm~\ref{rp.tree}
we construct a collection of rooted subtrees
$(S^{(\kappa)},u_1^{(\kappa)})$, $\kappa\in K$,
indexed by a rooted tree $(K,1)$.

\begin{algorithm}\label{rp.tree}
We set $(S^{(1)},u_1^{(1)}) = (S,\tau)$ as the rooted tree at the root index
$1$ and start with $K=\{1\}$.
Provided the rooted subtree $(S^{(\kappa)},u_1^{(\kappa)})$ at a leaf
$\kappa\in K$, we can pursue children of it recursively.
\begin{enumerate}
\renewcommand{\labelenumi}{(\roman{enumi})}
\item
In the first step we identify the unique path
\begin{equation}\label{u.path}
  \mathbf{u}^{(\kappa)} = (u_1^{(\kappa)},u_2^{(\kappa)},\ldots,u_*^{(\kappa)})
\end{equation}
until it hits the vertex $u_*^{(1)}$ with no successor or more than one
successor in the rooted tree $(S^{(\kappa)},u_1^{(\kappa)})$.
If $u_*^{(1)}$ has no successor, there is no children and $\kappa\in K$
is labeled as ``terminated,''
indicated by $C(\kappa)=\varnothing$.
Otherwise, we can find the collection
$\{u_1^{(\sigma_1)},\ldots,u_1^{(\sigma_M)}\}$
of successors
[that is, the collection of vertices that $u_*^{(\kappa)}$ covers
in the rooted tree $(S,\tau)$]
indexed by
\begin{equation}\label{plane.order}
  C(\kappa) = \{\sigma_1,\ldots,\sigma_M\},
\end{equation}
and enumerate them
from the first $\sigma_1$ to the last $\sigma_M$;
thus, introducing a linear ordering on $C(\kappa)$.
\item
In the second step
we extend $K$ to $K\cup C(\kappa)$,
where each successor $u_1^{(\sigma_i)}$ is identified with a leaf of a subtree component
$S^{(\sigma_i)}$ of the forest $S^{(\kappa)}-\mathbf{u}^{(\kappa)}$,
and each rooted subtree $(S^{(\sigma_i)},u_1^{(\sigma_i)})$
becomes a child of $(S^{(\kappa)},u_1^{(\kappa)})$.
\end{enumerate}
We identify all the rooted subtree $(S^{(\kappa)},u_1^{(\kappa)})$
by repeating these steps recursively until
all the leaves $\kappa\in K$ are labeled as ``terminated.''
\end{algorithm}

In Algorithm~\ref{rp.tree}
the collection $K$ of indices $\kappa$'s
starts from the root $1$,
and each successor $\sigma_i$ is associated with
linearly ordered set $C(\kappa)$ of \eqref{plane.order}.
Then the rooted tree $(K,1)$ is viewed as a \emph{rooted plane tree} (cf.~\cite{bender2013}).
By $\le_{1}$ we denote the partial ordering introduced by the rooted tree $(K,1)$.
We have a natural linear extension of the rooted tree $(K,1)$ by
lexicographic ordering, denoted by $\le_{\mathrm{lex}}$.
When a pair $\{\kappa,\kappa'\}$ of $K$ is not comparable in $\le_{1}$
we can find the least upper bound $\hat{\kappa}$ of the pair
with $\kappa\le_{1}\sigma_i$ and $\kappa'\le_{1}\sigma_j$
for some distinct $\sigma_i,\sigma_j\in C(\hat{\kappa})$,
and therefore, we can introduce $\kappa<_{\mathrm{lex}}\kappa'$
if $\sigma_i<\sigma_j$ in the linearly ordered $C(\hat{\kappa})$.

In Definition~\ref{classw}
we consider a poset $\mathcal{S}$ whose Hasse diagram $S$ is a tree,
and arbitrarily select a rooted tree $(S,\tau)$ with leaf $\tau\in S$.
Then Class W is equivalently characterized by the tails $u_*^{(\kappa)}$'s
of (\ref{u.path}) generated by Algorithm~\ref{rp.tree}.

\begin{definition}\label{classw}
The poset $\mathcal{S}$ is in Class W
if and only if the tail $u_*^{(\kappa)}$ of (\ref{u.path}) is minimal or maximal in
$\mathcal{S}$.
Assuming $C(1)\neq\varnothing$,
the poset $\mathcal{S}$ of Class W belongs to Class W$_{\star}$
if the tail $u_*^{(\kappa)}$ is always maximal in $\mathcal{S}$ whenever $C(\kappa)\neq\varnothing$.
Class W$^{\star}$ is dually characterized by the minimality of
$u_*^{(\kappa)}$ whenever $C(\kappa)\neq\varnothing$.
If $C(1)=\varnothing$ then $\mathcal{S}$ becomes an up-down poset.
\end{definition}

\subsection{Recursive inverse transforms}

Here we assume that $\mathcal{S}$ is a poset of Class W,
and continue the setting of Definition~\ref{classw}
where the rooted tree $(S,\tau)$ is selected
and the rooted tree $(K,1)$ is generated by Algorithm~\ref{rp.tree}.
For each $x\in S$ we can define
the closed section $(\leftarrow,x] = \{z\in S: z\le_{\tau}x\}$
and the open section $(\leftarrow,x) = \{z\in S: z<_{\tau}x\}$
by the rooted tree $(S,\tau)$.
We can observe that each of these sections is either a down-set or an up-set
of $\mathcal{S}$.
For a measure $P$ (i.e., a nonnegative additive set function) on $S$
we can introduce distribution functions $F(x)$ and $F(x-)$ on $S$ respectively by
\begin{equation*}
  F(x) = P((\leftarrow,x])
    \mbox{ and }
  F(x-) = P((\leftarrow,x)) = F(x) - P(\{x\})
\end{equation*}
for $x\in S$.
$F$ is said to be smaller than $F'$ on $\mathcal{S}$, denoted by $F\preceq F'$,
if
\begin{equation*}
  \begin{cases}
    F(x) \ge F'(x)
    & \mbox{if $(\leftarrow,x]$ is a down-set of $\mathcal{S}$; } \\
    F(x) \le F'(x)
    & \mbox{if $(\leftarrow,x]$ is an up-set of $\mathcal{S}$ }
  \end{cases}
\end{equation*}
for each $x\in S$.
Note that the comparability between $F$ and $F'$ implies $F(\tau)=F'(\tau)$.
When $F$ and $F'$ correspond to the respective probability measures $P$ and $P'$,
$F\preceq F'$ is equivalent to the stochastic order $P\preceq P'$.

Similarly we can define a distribution function $\mu(\kappa)$ on $K$ if it satisfies
\begin{equation*}
  \mu(\kappa)
  \ge\mu(\kappa-) = \sum_{\sigma_i\in C(\kappa)}\mu(\sigma_i)
  \mbox{ for each $\kappa\in K$, }
\end{equation*}
where the summation above is zero when $C(\kappa)=\varnothing$.
We say that $\mu$ is interlaced with a distribution function $F$ on $S$
if $\mu$ and $F$ satisfy $\mu(1)=F(\tau)$ and
\begin{equation}\label{interlace}
  \mu(\kappa-)\le F(u_*^{(\kappa)})\le F(u_1^{(\kappa)})\le\mu(\kappa)
\end{equation}
for each path $\mathbf{u}^{(\kappa)}$ of \eqref{u.path}
constructed by Algorithm~\ref{rp.tree}.
For each vertex $\sigma_j$ of the linearly ordered set
$C(\kappa) = \{\sigma_1,\ldots,\sigma_M\}$ of \eqref{plane.order}
we can introduce $\mu^{(\kappa)}\lceil\sigma_j\rceil$ and
$\mu^{(\kappa)}\lfloor\sigma_j\rfloor$ by
\begin{align}\label{lex.ceil}
  \mu^{(\kappa)}\lceil\sigma_j\rceil &= \sum_{i=1}^j \mu(\sigma_i);
  \\ \label{lex.floor}
  \mu^{(\kappa)}\lfloor\sigma_j\rfloor &= \sum_{i=1}^{j-1} \mu(\sigma_i)
\end{align}

In what follows we consider a system $(F_\alpha:\alpha\in A)$ of
distribution functions on $S$ satisfying $F_\alpha(\tau) = c > 0$ for each
$\alpha\in A$,
and assume that there exists an interlaced distribution function $\mu$ on $K$
with all $F_\alpha$'s.
We set $\hat{S}^{(1)} = S$, and extend
$S^{(\kappa)}$ to $\hat{S}^{(\kappa)} = S^{(\kappa)}+u_*^{(\hat{\kappa})}$
if $\hat{\kappa}$ is the parent of $\kappa\in K\setminus\{1\}$.
Similarly we extend the path $\mathbf{u}^{(\kappa)}$
by setting
$\hat{\mathbf{u}}^{(1)} =\mathbf{u}^{(1)}$ and
\begin{equation}\label{u.extend}
  \hat{\mathbf{u}}^{(\kappa)} =\mathbf{u}^{(\kappa)}+u_*^{(\hat{\kappa})}
  =(u_*^{(\hat{\kappa})},u_1^{(\kappa)},\ldots,u_*^{(\kappa)})
\end{equation}
to include the tail element
$u_*^{(\hat{\kappa})}$ of the parent path $\mathbf{u}^{(\hat{\kappa})}$
if $\kappa\in K\setminus\{1\}$.
Then we can introduce a distribution function $F_\alpha^{(\kappa)}$ on $\hat{S}^{(\kappa)}$ by setting
$F_\alpha^{(1)}=F_\alpha$ and
\begin{equation}\label{f.recursion}
  F_\alpha^{(\kappa)}(x) = \begin{cases}
    F_\alpha(x) & \mbox{if $x\in S^{(\kappa)}$;} \\
    \mu(\kappa) & \mbox{if $x=u_*^{(\hat{\kappa})}$}
  \end{cases}
\end{equation}
for $\kappa\in K\setminus\{1\}$.
In Algorithm~\ref{map.alg}
for each $\alpha\in A$
we can recursively construct an inverse transform
$X_{\mu,\alpha}^{(\kappa)}$
from $[0,\mu(\kappa))$ to $\hat{S}^{(\kappa)}$,
and simply write $X_{\mu,\alpha}$ for the inverse transform
$X_{\mu,\alpha}^{(1)}$ from $[0,\mu(1))$ to $S$.

\begin{algorithm}\label{map.alg}
If $C(\kappa)=\varnothing$ then $\mu(\kappa-)=0$.
Otherwise, we have the linearly ordered set $C(\kappa)$ of \eqref{plane.order}
for which the inverse transform $X_{\mu,\alpha}^{(\sigma_i)}$
from $[0,\mu(\sigma_i))$ to $\hat{S}^{(\sigma_i)}$ can be constructed by recursion.
We set for $\omega\in [0,\mu(\kappa-))$
\begin{equation}\label{map.recursion}
  X_{\mu,\alpha}^{(\kappa)}(\omega) = X_{\mu,\alpha}^{(\sigma_i)}(\omega-\mu^{(\kappa)}\lfloor\sigma_i\rfloor)
  \mbox{ if $\mu^{(\kappa)}\lfloor\sigma_i\rfloor\le\omega<\mu^{(\kappa)}\lceil\sigma_i\rceil$}
\end{equation}
for some $\sigma_i\in C(\kappa)$.
We extend the map $X_{\mu,\alpha}^{(\kappa)}$ from $[0,\mu(\kappa))$
to $\hat{S}^{(\kappa)}$ by setting for $\omega\in [\mu(\kappa-),\mu(\kappa))$,
\begin{equation}\label{map.inverse}
  X_{\mu,\alpha}^{(\kappa)}(\omega)
  = \bigwedge\nolimits_{\tau}\{x\in\hat{\mathbf{u}}^{(\kappa)}: \omega<F_\alpha^{(\kappa)}(x)\}
\end{equation}
which returns the minimum in the linearly ordered set
$(\hat{\mathbf{u}}^{(\kappa)},\le_{\tau})$ of \eqref{u.extend}.
\end{algorithm}

\begin{remark}\label{map.rem}
In the construction of $X_{\mu,\alpha}^{(\kappa)}$ in Algorithm~\ref{map.alg}
we can observe that $X_{\mu,\alpha}^{(\kappa)}(\omega) = u_*^{(\kappa)}$
if and only if
\begin{equation}\label{map.i}
  \omega\in
  I_{\mu,\alpha}^{(\kappa)} =
  \left[\mu(\kappa-),F_\alpha(u_*^{(\kappa)})\right)
    \cup
    \bigcup_{\sigma_i\in C(\kappa)}\left\{
    \mu^{(\kappa)}\lfloor\sigma_i\rfloor +
       \left[F_\alpha(u_1^{(\sigma_i)}),\mu(\sigma_i)\right)
         \right\}
\end{equation}
where the union over $C(\kappa)$ is the empty set if $C(\kappa)=\varnothing$.
\end{remark}

\begin{proposition}\label{map.prop}
The recursive inverse transform $X_{\mu,\alpha}^{(\kappa)}$ of Algorithm~\ref{map.alg} realizes
$F_\alpha^{(\kappa)}$ in the sense that
\begin{equation}\label{map.realize}
  \lambda\left(\left\{\omega:
  X_{\mu,\alpha}^{(\kappa)}(\omega)\in(\leftarrow,x]
  \right\}\right)
  = F_\alpha^{(\kappa)}(x),
  \quad x\in\hat{S}^{(\kappa)},
\end{equation}
where $\lambda$ is the Lebesgue measure on $[0,\mu(\kappa))$.
Furthermore, it is monotone so that
\begin{equation}\label{map.monotone}
  X_{\mu,\alpha}^{(\kappa)}(\omega)\le X_{\mu,\beta}^{(\kappa)}(\omega)
  \mbox{ in $\mathcal{S}$ for any $\omega\in [0,\mu(\kappa))$}
\end{equation}
whenever $F_\alpha^{(\kappa)}\preceq F_\beta^{(\kappa)}$.
\end{proposition}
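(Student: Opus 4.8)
The plan is to prove \eqref{map.realize} and \eqref{map.monotone} together by induction on the rooted plane tree $(K,1)$ produced by Algorithm~\ref{rp.tree}, verifying them for an index $\kappa$ under the assumption that they hold for every child $\sigma_i\in C(\kappa)$. Throughout I would lean on the elementary observation that, in a tree, a section $(\leftarrow,x]$ (computed in the ambient rooted subtree) is an up-set of $\mathcal S$ exactly when $x$ covers its neighbour $y$ towards $\tau$ in $\mathcal S$, and a down-set exactly when $y$ covers $x$: the section meets the rest of the tree only across the edge $\{x,y\}$, so no monotone path can re-enter it once that edge blocks it, and $x,y$ are comparable in $\mathcal S$ because they are adjacent in the Hasse diagram. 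In the base case $C(\kappa)=\varnothing$ we have $\mu(\kappa-)=0$ and $\hat S^{(\kappa)}=\hat{\mathbf u}^{(\kappa)}$ is a chain, so \eqref{map.inverse} makes $X^{(\kappa)}_{\mu,\alpha}$ the ordinary quantile function of the non-increasing $F^{(\kappa)}_\alpha$ on that chain; then \eqref{map.realize} is immediate because $X^{(\kappa)}_{\mu,\alpha}(\omega)\le_\tau x$ iff $\omega<F^{(\kappa)}_\alpha(x)$, and \eqref{map.monotone} is the chain lemma stated below.

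For the inductive step with $C(\kappa)=\{\sigma_1,\dots,\sigma_M\}$ I would first note that Algorithm~\ref{map.alg} is well posed: the intervals occurring in \eqref{map.recursion} partition $[0,\mu(\kappa-))$ by \eqref{lex.ceil}--\eqref{lex.floor}, each $X^{(\sigma_i)}_{\mu,\alpha}$ maps into $\hat S^{(\sigma_i)}\subseteq\hat S^{(\kappa)}$, and for $\omega\in[\mu(\kappa-),\mu(\kappa))$ the set in \eqref{map.inverse} contains $u_*^{(\hat\kappa)}$, hence is nonempty. Next, $F^{(\kappa)}_\alpha\preceq F^{(\kappa)}_\beta$ forces $F^{(\sigma_i)}_\alpha\preceq F^{(\sigma_i)}_\beta$ for every $i$: for $x\in S^{(\sigma_i)}$ the section $(\leftarrow,x]$, its neighbour towards $\tau$, and hence its up/down-set status coincide whether computed in $\hat S^{(\sigma_i)}$ or in $\hat S^{(\kappa)}$, while at the adjoined maximum $u_*^{(\kappa)}$ of $\hat S^{(\sigma_i)}$ both distribution functions equal $\mu(\sigma_i)$ by \eqref{f.recursion}, so no constraint arises there; thus the inductive hypothesis is available for all children.

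For \eqref{map.realize} I split $[0,\mu(\kappa))$ into the $M$ child blocks and the tail block $[\mu(\kappa-),\mu(\kappa))$ and compute $\lambda(\{\omega:X^{(\kappa)}_{\mu,\alpha}(\omega)\in(\leftarrow,x]\})$ in the two possible locations of $x\in\hat S^{(\kappa)}$. If $x\in S^{(\sigma_i)}$, then $(\leftarrow,x]$ is contained in $S^{(\sigma_i)}$, disjoint from $\hat{\mathbf u}^{(\kappa)}$ and from every $\hat S^{(\sigma_j)}$ with $j\ne i$, so only the $\sigma_i$-block contributes, and by the inductive realization it contributes $F^{(\sigma_i)}_\alpha(x)=F^{(\kappa)}_\alpha(x)$. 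If $x=w_k$ lies on the chain $\hat{\mathbf u}^{(\kappa)}=(w_0,\dots,w_L)$ (with $w_0$ the $\le_\tau$-maximum, $w_L=u_*^{(\kappa)}$, and $w_0=u_*^{(\hat\kappa)}$ when $\kappa\ne1$), then $(\leftarrow,w_k]$ consists of $\{w_k,\dots,w_{L-1}\}$ together with the full subtrees $\hat S^{(\sigma_i)}$ (which overlap only in $w_L=u_*^{(\kappa)}$), so each child block contributes its whole mass $\mu(\sigma_i)$ (summing to $\mu(\kappa-)$), while the tail block contributes $F^{(\kappa)}_\alpha(w_k)-\mu(\kappa-)$; here the interlacing \eqref{interlace}, in the form $\mu(\kappa-)\le F_\alpha(u_*^{(\kappa)})\le F_\alpha(u_1^{(\kappa)})\le\mu(\kappa)$ (which also makes $F^{(\kappa)}_\alpha$ non-increasing along $\hat{\mathbf u}^{(\kappa)}$), is precisely what makes this last mass come out right, and the two contributions telescope to $F^{(\kappa)}_\alpha(w_k)$. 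Remark~\ref{map.rem} is the instance $x=u_*^{(\kappa)}$ of this computation.

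Finally, for \eqref{map.monotone} assume $F^{(\kappa)}_\alpha\preceq F^{(\kappa)}_\beta$. On each child block, $X^{(\kappa)}_{\mu,\alpha}$ and $X^{(\kappa)}_{\mu,\beta}$ are translates of $X^{(\sigma_i)}_{\mu,\alpha}$ and $X^{(\sigma_i)}_{\mu,\beta}$, which are comparable in $\mathcal S$ by induction. On the tail block both values lie on $\hat{\mathbf u}^{(\kappa)}$, and there one invokes the following chain lemma: if $G_\alpha,G_\beta$ are non-increasing along $(w_0,\dots,w_L)$, agree at $w_0$, and satisfy $G_\alpha(w_j)\le G_\beta(w_j)$ when the edge $\{w_{j-1},w_j\}$ is ascending and $G_\alpha(w_j)\ge G_\beta(w_j)$ when it is descending, then the quantile maps $Y_\gamma(\omega)=w_{\max\{i:G_\gamma(w_i)>\omega\}}$ satisfy $Y_\alpha(\omega)\le Y_\beta(\omega)$ in $\mathcal S$ for every $\omega<G_\alpha(w_0)$ (the hypotheses hold here with $G_\gamma=F^{(\kappa)}_\gamma$ on $\hat{\mathbf u}^{(\kappa)}$). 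To prove the lemma, write $c_\alpha,c_\beta$ for the two maxima; if $c_\alpha<c_\beta$ then monotonicity of $G_\alpha,G_\beta$ together with the defining maxima gives $G_\alpha(w_j)\le\omega<G_\beta(w_j)$ for every $j$ with $c_\alpha<j\le c_\beta$, so each such edge $\{w_{j-1},w_j\}$ must be ascending, whence $w_{c_\alpha}<w_{c_\alpha+1}<\cdots<w_{c_\beta}$ in $\mathcal S$ and $Y_\alpha(\omega)\le Y_\beta(\omega)$; the case $c_\alpha>c_\beta$ is dual, with descending edges forcing $w_{c_\beta}>\cdots>w_{c_\alpha}$, and $c_\alpha=c_\beta$ is trivial. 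The part I expect to be the real work is the realization bookkeeping in the inductive step --- pinning down exactly how $(\leftarrow,x]$ sits inside $\hat S^{(\kappa)}$ and checking that the child-block masses plus the tail-block mass add up to $F^{(\kappa)}_\alpha(x)$, with the interlacing inequalities \eqref{interlace} being exactly what makes this balance; by contrast the monotonicity assertion is short once the chain lemma is isolated.
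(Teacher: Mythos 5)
Your proposal is correct and takes essentially the same route as the paper's proof: induction on the rooted plane tree $(K,1)$, with the chain (quantile) argument handling the base case and the tail block $[\mu(\kappa-),\mu(\kappa))$, and the recursion \eqref{map.recursion} together with the induction hypothesis handling the child blocks. The only difference is presentational: you spell out the realization bookkeeping for the two locations of $x$ and the fact that $F_\alpha^{(\kappa)}\preceq F_\beta^{(\kappa)}$ restricts to the children (agreement at the adjoined vertex), details the paper leaves implicit.
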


\begin{proof}
The realization \eqref{map.realize} and the monotonicity \eqref{map.monotone}
can be shown by induction on the rooted plane tree $(K,1)$.
First consider the base case when $C(\kappa)=\varnothing$.
Then $\hat{S}^{(\kappa)}$
becomes the linearly ordered set
\begin{equation}\label{u.hat}
  \hat{\mathbf{u}}^{(\kappa)} = (x_1,\ldots,x_N),
\end{equation}
of \eqref{u.extend} with root $x_1=u_*^{(\hat{\kappa})}$
if the parent $\hat{\kappa}$ of $\kappa$ exists;
otherwise, $\kappa = 1$ and $x_1=\tau$.
By (\ref{map.inverse}) we obtain
for $\omega\in [0,\mu(\kappa))$
\begin{equation}\label{map.interval}
  X_{\mu,\alpha}^{(\kappa)}(\omega)=x_i
  \mbox{ if and only if }
  F_\alpha^{(\kappa)}(x_i-)\le\omega<F_\alpha^{(\kappa)}(x_i)
\end{equation}
which immediately implies (\ref{map.realize}).
Suppose that $F_\alpha^{(\kappa)}\preceq F_\beta^{(\kappa)}$,
and that $x_j=X_{\mu,\beta}^{(\kappa)}(\omega)$ for an arbitrarily fixed
$\omega\in [0,\mu(\kappa))$.
If $i\ge j+1$ then
\begin{equation*}
  F_\beta^{(\kappa)}(x_k)\le F_\beta^{(\kappa)}(x_j-)
  \le\omega<F_\alpha^{(\kappa)}(x_i)\le F_\alpha^{(\kappa)}(x_k)
\end{equation*}
for $k=j+1,\ldots,i$;
thus, $(\leftarrow,x_k]=\{x_k,\ldots,x_N\}$ must be
a down-set of $\mathcal{S}$,
and consequently,
$X_{\mu,\alpha}^{(\kappa)}(\omega)=x_i<\cdots<x_{j+1}<x_j=X_{\mu,\beta}^{(\kappa)}(\omega)$ in $\mathcal{S}$.
Similarly if $j\ge i+1$ then
$(\leftarrow,x_k]$ is an up-set of $\mathcal{S}$
for $k=i+1,\ldots,j$, and therefore,
$X_{\mu,\beta}^{(\kappa)}(\omega)=x_j>\cdots>x_{i+1}>x_i=X_{\mu,\alpha}^{(\kappa)}(\omega)$ in $\mathcal{S}$.
In either case the monotonicity (\ref{map.monotone}) has been verified.

Suppose that $C(\kappa)\neq\varnothing$
and that the induction hypothesis holds for each $\sigma_i\in C(\kappa)$.
For $\omega\in [\mu(\kappa-),\mu(\kappa))$
we can express \eqref{map.inverse} equivalently by \eqref{map.interval} on
$\hat{\mathbf{u}}^{(\kappa)}$ of \eqref{u.hat}.
By the recursive construction of \eqref{map.recursion}
the induction hypothesis implies \eqref{map.realize}
for $x\in\hat{S}^{(\kappa)}$
as well as the monotonicity of \eqref{map.monotone}
for $\omega\in [0,\mu(\kappa-))$.
As it was demonstrated for the base case,
the construction of \eqref{map.interval}
ensures (\ref{map.monotone}) for $\omega\in [\mu(\kappa-),\mu(\kappa))$
whenever $F_\alpha^{(\kappa)}\preceq F_\beta^{(\kappa)}$.
Thus, we have completed the proof.
\end{proof}

Let $\mathcal{A}$ be a poset,
and let $(F_\alpha:\alpha\in\mathcal{A})$ be a stochastically monotone system
of probability distribution functions on $\mathcal{S}$
[i.e., $F_\alpha(\tau)=1$].
By Algorithm~\ref{map.alg} we can construct an inverse transform $X_{\mu,\alpha}=X_{\mu,\alpha}^{(1)}$
from $[0,1)$ to $\mathcal{S}$,
and view it as $\mathcal{S}$-valued random variable on the sample space
$\Omega=[0,1)$ with probability measure $\lambda$.
Thus, the following corollary is an immediate consequence of
Proposition~\ref{map.prop}.

\begin{corollary}\label{map.cor}
If there exists an interlaced distribution function $\mu$ on $K$
with all $F_\alpha$'s
then the stochastically monotone system $(F_\alpha:\alpha\in\mathcal{A})$ is
realizably monotone.
\end{corollary}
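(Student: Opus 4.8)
The plan is to read the corollary off Proposition~\ref{map.prop} applied at the root index $\kappa=1$, with the specialization $c=1$. Since each $F_\alpha$ is a probability distribution function we have $F_\alpha(\tau)=1$, and the interlacing requirement forces $\mu(1)=F_\alpha(\tau)=1$; hence Algorithm~\ref{map.alg} produces, for every $\alpha\in\mathcal{A}$, an inverse transform $X_{\mu,\alpha}=X_{\mu,\alpha}^{(1)}$ from $([0,1),\mathcal{B}([0,1)),\lambda)$ to $\mathcal{S}$, which I regard as an $\mathcal{S}$-valued random variable on that probability space.

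First I would check that $X_{\mu,\alpha}$ is marginally distributed as $P_\alpha$. Specializing the realization identity \eqref{map.realize} to $\kappa=1$, where $\hat{S}^{(1)}=S$, gives $\lambda(\{\omega:X_{\mu,\alpha}(\omega)\in(\leftarrow,x]\})=F_\alpha(x)$ for every $x\in S$; that is, the distribution function on the rooted tree $(S,\tau)$ of the law of $X_{\mu,\alpha}$ equals $F_\alpha$. Because $S$ is finite, a distribution function on $(S,\tau)$ determines its underlying measure uniquely (the point masses are recovered via $P(\{x\})=F(x)-F(x-)$), so the law of $X_{\mu,\alpha}$ is exactly $P_\alpha$.

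Next I would establish the monotonicity. Fix $\alpha<\beta$ in $\mathcal{A}$. Stochastic monotonicity of $(F_\alpha:\alpha\in\mathcal{A})$ means $P_\alpha\preceq P_\beta$ in stochastic order, which by the equivalence recorded just before Algorithm~\ref{map.alg} is the same as $F_\alpha\preceq F_\beta$ on $\mathcal{S}$, i.e. $F_\alpha^{(1)}\preceq F_\beta^{(1)}$. Applying the monotonicity statement \eqref{map.monotone} at $\kappa=1$ then yields $X_{\mu,\alpha}(\omega)\le X_{\mu,\beta}(\omega)$ in $\mathcal{S}$ for every $\omega\in[0,1)$, in particular almost surely. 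Thus the system $(X_{\mu,\alpha}:\alpha\in\mathcal{A})$ of $\mathcal{S}$-valued random variables has the prescribed marginals and realizes the monotonicity, so $(F_\alpha:\alpha\in\mathcal{A})$ (equivalently $(P_\alpha:\alpha\in\mathcal{A})$) is realizably monotone.

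Since every ingredient is already supplied by Proposition~\ref{map.prop} and the remarks preceding it, there is no genuine obstacle; the only points that merit a sentence of care are the passage from ``matching distribution functions'' to ``matching laws'' on the finite tree $(S,\tau)$, and the already-stated equivalence between the stochastic order of the $P$'s and the order $\preceq$ of distribution functions. The corollary is therefore just the packaging of Proposition~\ref{map.prop} at $\kappa=1$, $c=1$.
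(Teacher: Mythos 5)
Your proposal is correct and follows the paper's own route: the paper also obtains the corollary as an immediate consequence of Proposition~\ref{map.prop} applied at the root index $\kappa=1$ with $F_\alpha(\tau)=1$, viewing $X_{\mu,\alpha}=X_{\mu,\alpha}^{(1)}$ as an $\mathcal{S}$-valued random variable on $([0,1),\lambda)$. Your added remarks on recovering the law from the distribution function and on the equivalence $P_\alpha\preceq P_\beta \Leftrightarrow F_\alpha\preceq F_\beta$ simply spell out details the paper leaves implicit.
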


\subsection{Proof of Theorem~\ref{me.bounded}}
\label{me.proof.sec}

To complete the proof it suffices to find an interlaced distribution function
of Corollary~\ref{map.cor}
under each of the four conditions (i)--(iv) of Theorem~\ref{me.bounded}.

\begin{lemma}\label{interlace.lem}
Let $\mathcal{S}$ be a poset of Class W, and
let $F_\alpha,F_\beta,F$ be distribution functions on $\mathcal{S}$.
\begin{enumerate}
\renewcommand{\labelenumi}{(\roman{enumi})}
\item
If $F_\alpha\preceq F\preceq F_\beta$ then
\begin{equation}\label{ab.dist}  
  \mu_{(\alpha,\beta)}(\kappa)=F_{\alpha}\vee F_{\beta}(u_1^{(\kappa)}),
  \quad\kappa\in K,
\end{equation}
is interlaced with $F$.
\item
If $F_\alpha\preceq F$ and $\mathcal{S}$ is in Class $\mathrm{W}_{\star}$ then
\begin{equation}\label{a.dist}  
  \mu_\alpha(\kappa)=F_{\alpha}(u_1^{(\kappa)}),
  \quad\kappa\in K,
\end{equation}
is interlaced with $F$.
\item
If $F\preceq F_\alpha$ and $\mathcal{S}$ is in Class $\mathrm{W}^{\star}$
then $\mu_\alpha$ of \eqref{a.dist} is interlaced with $F$.
\end{enumerate}
\end{lemma}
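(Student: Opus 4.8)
The plan is to verify, for each of the three candidates $\mu$, the two requirements hidden in the word ``interlaced'': that $\mu$ is a distribution function on $K$, i.e. $\mu(\kappa)\ge\sum_{\sigma_i\in C(\kappa)}\mu(\sigma_i)$ for every $\kappa$, and that $\mu(1)=F(\tau)$ together with the chain $\mu(\kappa-)\le F(u_*^{(\kappa)})\le F(u_1^{(\kappa)})\le\mu(\kappa)$ of \eqref{interlace}. Two pieces are free: the normalization $\mu(1)=F(\tau)$ holds because every comparability hypothesis forces $F_\alpha(\tau)=F(\tau)=F_\beta(\tau)$ and $u_1^{(1)}=\tau$; and the middle inequality $F(u_*^{(\kappa)})\le F(u_1^{(\kappa)})$ holds because $(\leftarrow,u_*^{(\kappa)}]\subseteq(\leftarrow,u_1^{(\kappa)}]$ and $F(\cdot)=P((\leftarrow,\cdot])$ is monotone for the underlying nonnegative measure $P$. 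So the real content is the two outer inequalities and the distribution-function inequality.

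Next I would isolate two structural facts about $(S,\tau)$ and Algorithm~\ref{rp.tree}. \emph{(1) Additivity at a tail:} $S^{(\kappa)}=(\leftarrow,u_1^{(\kappa)}]$, and deleting the path $\mathbf{u}^{(\kappa)}$ strips off exactly the branches hanging from $u_*^{(\kappa)}$, so
\begin{equation*}
  (\leftarrow,u_*^{(\kappa)}]=\{u_*^{(\kappa)}\}\ \sqcup\ \bigsqcup_{\sigma_i\in C(\kappa)}(\leftarrow,u_1^{(\sigma_i)}],\qquad\text{hence}\qquad F(u_*^{(\kappa)})=P(\{u_*^{(\kappa)}\})+\sum_{\sigma_i\in C(\kappa)}F(u_1^{(\sigma_i)}),
\end{equation*}
and the same identity for $F_\alpha,F_\beta$; in particular $\sum_{\sigma_i\in C(\kappa)}F_\alpha(u_1^{(\sigma_i)})\le F_\alpha(u_*^{(\kappa)})$, etc. \emph{(2) Section type:} for a non-root vertex $v$ of $S$ the only edge leaving $(\leftarrow,v]$ joins $v$ to its $\le_\tau$-parent $p$, so $(\leftarrow,v]$ is a down-set of $\mathcal{S}$ when $v<p$ and an up-set when $v>p$, one of which always holds since that edge is a covering relation. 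Combining (2) with Definition~\ref{classw}: if the tail $u_*^{(\kappa)}$ is maximal then $(\leftarrow,u_*^{(\kappa)}]$ is an up-set while each $(\leftarrow,u_1^{(\sigma_i)}]$, $\sigma_i\in C(\kappa)$, is a down-set; if $u_*^{(\kappa)}$ is minimal these are reversed. Likewise, in Class $\mathrm W_\star$ the parent tail $u_*^{(\hat\kappa)}$ — which is the $\le_\tau$-parent of $u_1^{(\kappa)}$ — is maximal, so $(\leftarrow,u_1^{(\kappa)}]$ is a down-set; dually in Class $\mathrm W^\star$.

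For part~(i), fix $\kappa$ with $C(\kappa)\neq\varnothing$. From $F_\alpha\preceq F\preceq F_\beta$ one has $F_\alpha\ge F\ge F_\beta$ on every down-set section and $F_\alpha\le F\le F_\beta$ on every up-set section. If $u_*^{(\kappa)}$ is maximal, (2) gives $\mu_{(\alpha,\beta)}(\sigma_i)=F_\alpha\vee F_\beta(u_1^{(\sigma_i)})=F_\alpha(u_1^{(\sigma_i)})$, so by (1) $\mu_{(\alpha,\beta)}(\kappa-)=\sum_iF_\alpha(u_1^{(\sigma_i)})\le F_\alpha(u_*^{(\kappa)})$; since $(\leftarrow,u_*^{(\kappa)}]$ is then an up-set, $F_\alpha(u_*^{(\kappa)})\le F(u_*^{(\kappa)})$, which is the lower bound, while $F_\alpha(u_*^{(\kappa)})\le F_\alpha(u_1^{(\kappa)})\le F_\alpha\vee F_\beta(u_1^{(\kappa)})=\mu_{(\alpha,\beta)}(\kappa)$ is the distribution-function inequality. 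The minimal case is the mirror image with $F_\beta$ and $(\leftarrow,u_*^{(\kappa)}]$ a down-set, giving $F(u_*^{(\kappa)})\ge F_\beta(u_*^{(\kappa)})\ge\mu_{(\alpha,\beta)}(\kappa-)$. The remaining bound $F(u_1^{(\kappa)})\le\mu_{(\alpha,\beta)}(\kappa)$ holds for every $\kappa$ because $F(u_1^{(\kappa)})$ is sandwiched between $F_\alpha(u_1^{(\kappa)})$ and $F_\beta(u_1^{(\kappa)})$ regardless of the section type, and when $C(\kappa)=\varnothing$ the bound $\mu_{(\alpha,\beta)}(\kappa-)=0$ makes the lower/distribution-function claims vacuous. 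Parts~(ii) and (iii) are the one-sided specializations: in Class $\mathrm W_\star$ every tail is maximal, so the same chains go through with only $F_\alpha\preceq F$ and $\mu_\alpha(\kappa)=F_\alpha(u_1^{(\kappa)})$ (using the last remark of the previous paragraph to get $F(u_1^{(\kappa)})\le F_\alpha(u_1^{(\kappa)})$ for $\kappa\neq1$, the case $\kappa=1$ being the normalization), and (iii) is just (ii) applied to the dual poset $\mathcal{S}^*$, under which $\mathrm W_\star\leftrightarrow\mathrm W^\star$ and $\preceq$ reverses while $(S,\tau)$, $K$, and the notion of ``interlaced'' are unchanged.

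The step I expect to cost the most care is fact~(2) — correctly classifying which closed sections are up-sets and which are down-sets — together with the bookkeeping at the degenerate indices ($\kappa=1$, where $(\leftarrow,\tau]=S$ is both, and $\kappa$ with $C(\kappa)=\varnothing$, where $u_*^{(\kappa)}$ is a leaf), so that the ``minimal or maximal tail'' dichotomy of Definition~\ref{classw} is matched to the correct direction of each inequality in \eqref{interlace}. Once that alignment is nailed down, each inequality is a single application of the appropriate stochastic comparison to one up-set or one down-set.
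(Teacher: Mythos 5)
Your proof is correct and follows essentially the same route as the paper: classify each closed section $(\leftarrow,x]$ as an up-set or down-set via the tail's maximality/minimality, use the additive decomposition of $F(u_*^{(\kappa)}-)$ over the children $u_1^{(\sigma_i)}$, and chain the resulting one-sided inequalities (your explicit maximal/minimal case split for (i) is just the paper's single chain with $F_\alpha\vee F_\beta$ and $F_\alpha\wedge F_\beta$ unpacked). The treatment of (ii) matches the paper's chain verbatim, and (iii) by dualizing is exactly the paper's "dually verified."
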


Note in Lemma~\ref{interlace.lem} and the rest of paper that
we simply write $F_{\alpha}\vee F_{\beta}(x)$ and
$F_{\alpha}\wedge F_{\beta}(x)$ respectively
for $\max\{F_{\alpha}(x),F_{\beta}(x)\}$
and $\min\{F_{\alpha}(x),F_{\beta}(x)\}$.

\begin{proof}[Proof of Lemma~\ref{interlace.lem}]
Consider the case (i).
If $C(\kappa)=\varnothing$ then $\mu_{(\alpha,\beta)}$ of \eqref{ab.dist} satisfies
(\ref{interlace}).
Otherwise, for every $\sigma_i\in C(\kappa)$
the closed section
$(\leftarrow,u_1^{(\sigma_i)}]$ is a down-set of $\mathcal{S}$
if $(\leftarrow,u_*^{(\kappa)}]$ is an up-set;
$(\leftarrow,u_1^{(\sigma_i)}]$ is an up-set of $\mathcal{S}$
if $(\leftarrow,u_*^{(\kappa)}]$ is a down-set.
Thus, we can observe that
\begin{equation*}
  \mu_{(\alpha,\beta)}(\kappa-)
  = \sum_{\sigma_i\in C(\kappa)}\mu_{(\alpha,\beta)}(\sigma_i)
  \le F_\alpha\vee F_\beta(u_*^{(\kappa)}-)
  \le F_\alpha\wedge F_\beta(u_*^{(\kappa)})
  \le F(u_*^{(\kappa)})
\end{equation*}
and conclude that $\mu_{(\alpha,\beta)}$ satisfies (\ref{interlace}).

Consider the case (ii).
Then $(\leftarrow,u_*^{(\kappa)}]$ is an up-set of $\mathcal{S}$
if $C(\kappa)\neq\varnothing$,
and $(\leftarrow,u_1^{(\kappa)}]$ is a down-set of $\mathcal{S}$.
Hence, (\ref{interlace}) is implied by
\begin{equation*}
  \mu_\alpha(\kappa-) = \sum_{\sigma_i\in C(\kappa)}F_\alpha(u_1^{(\sigma_i)})
  \le F_\alpha(u_*^{(\kappa)})
  \le F(u_*^{(\kappa)})
  \le F(u_1^{(\kappa)})
  \le F_\alpha(u_1^{(\kappa)}).
\end{equation*}
The assertion of (iii) is dually verified.
\end{proof}

Under Theorem~\ref{me.bounded}(i)
we find $\mu_{(a_*,a^*)}$ of \eqref{ab.dist} desired for Corollary~\ref{map.cor}.
In the case of Theorem~\ref{me.bounded}(ii) and (iii)
we can choose $\mu_\alpha$ of \eqref{a.dist} with $\alpha=a_*$ and $\alpha=a^*$ respectively.
For Theorem~\ref{me.bounded}(iv)
we find $K=\{1\}$ and
$\mu(1)=1$ trivially interlaced with any $F_\alpha$.
In each case of Theorem~\ref{me.bounded}
we have established monotonicity equivalence by Corollary~\ref{map.cor}.

\section{Monotonicity equivalence by synchronization}\label{sync.sec}
\setcounter{equation}{0}\setcounter{figure}{0}

In this section we investigate how the notion of synchronizability
extends the sufficient conditions of Theorem~\ref{me.bounded}
for monotonicity equivalence,
and establish the following theorem.

\begin{theorem}\label{me.sync}
Let $\mathcal{S}$ be a poset of Class W.
Then each of the following conditions sufficiently
guarantees monotonicity equivalence for $(\mathcal{A},\mathcal{S})$:
\begin{enumerate}
\renewcommand{\labelenumi}{(\roman{enumi})}
\item
$\mathcal{A}$ is a synchronizable poset;
\item
$\mathcal{S}$ is in Class W$_{\star}$ and
$\mathcal{A}$ is a synchronizable poset for Class W$_{\star}$;
\item
$\mathcal{S}$ is in Class W$^{\star}$ and
$\mathcal{A}$ is a synchronizable poset for Class W$^{\star}$.
\end{enumerate}
\end{theorem}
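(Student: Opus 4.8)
The plan is to reduce Theorem~\ref{me.sync} to the already-proven Theorem~\ref{me.bounded} (in fact to Corollary~\ref{map.cor}) by using the synchronizing bijections $\Phi_{(\alpha,\beta)}$ that the introduction promises to construct in Sections~\ref{rsb.sec}--\ref{rsb.comp.sec}. The key point is that for a single pair $(\alpha,\beta)\in D_{\mathcal{A}}\times D_{\mathcal{A}^*}$, the restricted poset $\{\gamma\in\mathcal{A}:\alpha\le\gamma\le\beta\}$ has both a minimum and a maximum, so Corollary~\ref{map.cor} applies there and produces inverse transforms $X_{(\alpha,\beta),\gamma}=X_{\mu_{(\alpha,\beta)},\gamma}$ on $[0,1)$ that already realize monotonicity \emph{within} that sub-poset. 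The difficulty is gluing these local realizations into one global system $(X_\gamma:\gamma\in\mathcal{A})$: the naive choice $X_\gamma=X_{(\alpha,\beta),\gamma}$ depends on the choice of $(\alpha,\beta)$, and two different pairs give transforms that need not agree. This is exactly where the measure-preserving bijection $\Phi_{(\alpha,\beta)}:[0,1)\to[0,1)$ enters: we set $X_\gamma = X_{(\alpha,\beta),\gamma}\circ\Phi_{(\alpha,\beta)}$ and must show the composite does not depend on $(\alpha,\beta)$.

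First I would fix a locally connected spanning tree $T$ of the interlaced-relation graph $G_{\mathcal{A}}$, whose existence is the definition of synchronizability; this $T$ is the combinatorial object along which the recursion of Sections~\ref{rsb.sec}--\ref{rsb.comp.sec} runs, and along which the bijections $\Phi_{(\alpha,\beta)}$ are defined consistently. Next I would invoke the three facts established in those sections: (a) each $\Phi_{(\alpha,\beta)}$ is a well-defined measure-preserving bijection of $[0,1)$; (b) for adjacent vertices $\beta,\beta'$ of $T$ (equivalently, for pairs of minimal elements interlaced through some common upper bound), the transforms are compatible in the sense that $X_{(\alpha,\beta),\gamma}\circ\Phi_{(\alpha,\beta)} = X_{(\alpha,\beta'),\gamma}\circ\Phi_{(\alpha,\beta')}$ on the overlap $\{\gamma:\alpha,\beta,\beta'\text{ all comparable to }\gamma\}$; and (c) local connectedness guarantees that for any $\gamma\in\mathcal{A}$ the set $D_{\mathcal{A}}(\gamma)$ of minimal elements below $\gamma$ spans a subtree of $T$, so the local definitions can be propagated along edges of that subtree to a single well-defined value $X_\gamma(\omega)$. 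Combining (a)--(c) gives a well-defined system $(X_\gamma:\gamma\in\mathcal{A})$, each $X_\gamma$ distributed as $P_\gamma$ (because $\Phi$ is measure-preserving and $X_{(\alpha,\beta),\gamma}$ realizes $F_\gamma$ by Proposition~\ref{map.prop}), and $X_\alpha\le X_\beta$ in $\mathcal{S}$ whenever $\alpha<\beta$ in $\mathcal{A}$ (because any such comparable pair sits inside some common $D_{\mathcal{A}}(\cdot)$-subtree where the local monotonicity of Proposition~\ref{map.prop} applies). That establishes part~(i).

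For parts~(ii) and~(iii) I would run the dual-restricted version of the same argument. When $\mathcal{S}\in$~Class~$\mathrm{W}_{\star}$, Lemma~\ref{interlace.lem}(ii) shows we only need a minimal element below $\gamma$ — not a maximal one above it — to produce an interlaced $\mu_\alpha$; hence the relevant index set is just $D_{\mathcal{A}}$ rather than $D_{\mathcal{A}}\times D_{\mathcal{A}^*}$, the graph is $G_{\mathcal{A}}$ on $D_{\mathcal{A}}$, and "synchronizable for Class~$\mathrm{W}_{\star}$'' is precisely the right hypothesis. The bijections become $\Phi_\alpha$, and the gluing argument is verbatim. Part~(iii) is the order-dual, using Lemma~\ref{interlace.lem}(iii) and $D_{\mathcal{A}^*}$. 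The main obstacle throughout is step~(b)/(c) — verifying that the synchronizing bijections are mutually compatible along the edges of $T$ and that local connectedness propagates this compatibility globally without running into a cocycle obstruction; this is the technical heart deferred to Sections~\ref{rsb.sec}--\ref{rsb.proof.sec}, and once it is in hand the proof of Theorem~\ref{me.sync} is the short assembly just described, which is then completed in Section~\ref{wstar.sec}.
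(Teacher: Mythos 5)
Your assembly plan is exactly the paper's architecture: define $X_\gamma=X_{(\alpha,\beta),\gamma}\circ\Phi_{(\alpha,\beta)}$, show independence of the choice of $(\alpha,\beta)$, and get realization and monotonicity from Proposition~\ref{map.prop}; for (ii)--(iii) run the same scheme indexed by $D_{\mathcal{A}}$ alone with $\mu_\alpha$ from Lemma~\ref{interlace.lem}. So the route is the same as the paper's. The problem is that what you call facts (a)--(c) ``established in those sections'' is precisely the content of the theorem's proof, and your proposal does not supply it: you name the cocycle obstruction and then defer it. Concretely, three things are missing. First, the synchronizing bijections themselves must be constructed; the paper does this recursively on the rooted plane tree $(K,1)$ (Algorithm~\ref{rst.cons}, and a second variant, Algorithm~\ref{rstg.cons}, adapted to a fixed $F_\gamma$), starting from mutually interlaced pairs $(\mu_{(\alpha,\beta)},\mu_{(\alpha',\beta')})$ whose interlacing is itself a lemma (Lemma~\ref{abcd.pair}, which notably does not require a common middle distribution $F$). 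Second, the compatibility you assert in (b) is not a one-line consequence of anything in Section~\ref{me.sec}: it rests on the identity $X_{\mu,\gamma}=X_{\mu',\gamma}\circ\Phi_{\mu,\mu',\gamma}$ (Lemma~\ref{map.unique}, proved by induction on the recursion) together with path-independence of composites of RSB's, which the paper proves via the commutation relation of Lemma~\ref{abcd.rst} and an induction on minimum-length paths exploiting complete bipartite induced subposets (Lemma~\ref{rst.welldef}). Without these, ``$\Phi_{(\alpha,\beta)}$ is well defined'' is an assumption, not a conclusion.

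Third, for part (i) your write-up fixes only a locally connected spanning tree $T$ of $G_{\mathcal{A}}$, but the gluing must be performed over both $T$ and a locally connected spanning tree $T^*$ of $G_{\mathcal{A}^*}$, on the product graph $T\,\mbox{\scriptsize$\square$}\,T^*$ restricted to comparable pairs $(\alpha,\beta)$ with $\alpha\le\beta$. That this restricted product is connected is not automatic; it is Lemma~\ref{g.product}, and its proof uses local connectedness of $T$ in an essential way. Connectedness is needed even to define $\Phi_{(\alpha,\beta)}$ as a composition along a path from a base pair, and the minimality of the path is what guarantees (via local connectedness) that every intermediate pair $(\alpha'',\beta'')$ satisfies $\alpha''\le\gamma\le\beta''$, which is the step that lets Proposition~\ref{map.prop} apply along the whole path. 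In short: your reduction-and-gluing skeleton coincides with the paper's, but the construction of the RSB's, their composability, and the connectivity of the synchronization graph are the actual proof, and they are absent from the proposal.
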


\subsection{Recursive synchronizing bijections}
\label{rsb.sec}

Let $(S,\tau)$ be a rooted tree of a poset $\mathcal{S}$ of Class W,
and let $(K,1)$ be a rooted plane tree
constructed by Algorithm~\ref{rp.tree}.
A pair $(\mu,\mu')$ of distribution functions on $K$
is said to be \emph{mutually interlaced} if
for each $\kappa\in K$
\begin{equation*}
  \textstyle
  \mu\vee\mu'(\kappa-)
  \le\mu\wedge\mu'(\kappa)
\end{equation*}
where we simply write
$\mu\vee\mu'(\kappa-)$
and $\mu\wedge\mu'(\kappa)$
respectively for
$\max\{\mu(\kappa-),\mu'(\kappa-)\}$
and
$\min\{\mu(\kappa),\mu'(\kappa)\}$.
It should be noted particularly that
if both $\mu$ and $\mu'$ are interlaced with some distribution function $F$ on
$\mathcal{S}$ then the pair $(\mu,\mu')$ is mutually interlaced.

\begin{remark}\label{interlace.rem}
Consider a stochastically monotone system
$(F_\alpha:\alpha\in\mathcal{A})$ of probability distribution functions on $\mathcal{S}$.
By Lemma~\ref{interlace.lem} we find a mutually interlaced pair of the following cases:
\begin{enumerate}
\renewcommand{\labelenumi}{(\alph{enumi})}
\item
We can construct $\mu_{(\alpha,\beta)}$ and $\mu_{(\alpha',\beta')}$ by \eqref{ab.dist}
respectively with $(F_\alpha,F_\beta)$ and $(F_{\alpha'},F_{\beta'})$
if $\alpha,\alpha'\le\gamma\le\beta,\beta'$ for some $\gamma\in\mathcal{A}$.
\item
We can construct $\mu_{\alpha}$ and $\mu_{\alpha'}$ by \eqref{a.dist}
respectively with $F_\alpha$ and $F_{\alpha'}$
if $\mathcal{S}$ is in Class~W$_{\star}$
and $\alpha,\alpha'\le\gamma$ with some $\gamma\in\mathcal{A}$;
or
if $\mathcal{S}$ is in Class~W$^{\star}$
and $\gamma\le\alpha,\alpha'$ with some $\gamma\in\mathcal{A}$.
\end{enumerate}
\end{remark}

Assuming that $I$ and $I'$ are Borel measurable subsets of $\mathbb{R}$
of equal length [i.e., $\lambda(I) = \lambda(I')$],
a Borel-measurable bijection $\Phi$ from $I$ to $I'$ is said to be \emph{measure-preserving}
if $\lambda(\Phi^{-1}(B)) = \lambda(B)$ for any Borel measurable subset $B$ of $I'$.
Provided an interlaced pair $(\mu,\mu')$,
we can construct a measure-preserving bijection $\Phi_{\mu,\mu'}^{(\kappa)}$
from $[0,\mu\wedge\mu'(\kappa))$ to itself
recursively by Algorithm~\ref{rst.cons}.
We call it
a \emph{recursive synchronizing bijection} (RSB) of $(\mu,\mu')$,
and simply write $\Phi_{\mu,\mu'}$ for an RSB $\Phi_{\mu,\mu'}^{(1)}$.

\begin{algorithm}\label{rst.cons}
We construct a measure-preserving bijection $\Phi_{\mu,\mu'}^{(\kappa)}$ recursively
on the rooted plane tree $(K,1)$.
If $C(\kappa)=\{\sigma_1,\ldots,\sigma_M\}\neq\varnothing$
then we have an RSB $\Phi_{\mu,\mu'}^{(\sigma_i)}$
from $[0,\mu\wedge\mu'(\sigma_i))$ to itself by recursion
for $i=1,\ldots,M$.
By using $\mu^{(\kappa)}\lfloor\sigma_i\rfloor$ of (\ref{lex.floor})
we can construct
\begin{equation}\label{rst.sigma}
  \Phi_{\mu,\mu'}^{(\kappa)}(\omega) =
  \Phi_{\mu,\mu'}^{(\sigma_i)}(\omega-\mu^{(\kappa)}\lfloor\sigma_i\rfloor)
  + \mu'^{(\kappa)}\lfloor\sigma_i\rfloor
\end{equation}
from
$\mu^{(\kappa)}\lfloor\sigma_i\rfloor +
[0,\mu\wedge\mu'(\sigma_i))$
to
$\mu'^{(\kappa)}\lfloor\sigma_i\rfloor +
[0,\mu\wedge\mu'(\sigma_i))$.
Furthermore, we set the identity map $\Phi_{\mu,\mu'}^{(\kappa)}$ on
\begin{equation*}
  \left[\mu\vee\mu'(\kappa-),\mu\wedge\mu'(\kappa)\right),
\end{equation*}
and extend a measure-preserving bijection of
$\Phi_{\mu,\mu'}^{(\kappa)}$ from
\begin{equation}\label{j.domain}
  J_{\mu,\mu'}^{(\kappa)} =
  \left[\mu(\kappa-),\mu\vee\mu'(\kappa-)\right)
    \cup
    \bigcup_{\sigma_i\in C(\kappa)}
    \left\{
    \mu^{(\kappa)}\lfloor\sigma_i\rfloor +
       \left[\mu\wedge\mu'(\sigma_i),\mu(\sigma_i)\right)
         \right\}
\end{equation}
to
\begin{equation}\label{j.range}
  {J'}_{\mu,\mu'}^{(\kappa)} =
  \left[\mu'(\kappa-),\mu\vee\mu'(\kappa-)\right)
    \cup
    \bigcup_{\sigma_i\in C(\kappa)}
    \left\{
    \mu'^{(\kappa)}\lfloor\sigma_i\rfloor +
       \left[\mu\wedge\mu'(\sigma_i),\mu'(\sigma_i)\right)
         \right\}
\end{equation}
of the equal length
\begin{equation*}
  \lambda(J_{\mu,\mu'}^{(\kappa)}) = \lambda(J_{\mu,\mu'}'^{(\kappa)})
  =\mu\vee\mu'(\kappa-)
  -\sum_{\sigma_i\in C(\kappa)}\mu\wedge\mu'(\sigma_i).
\end{equation*}
\end{algorithm}

\begin{remark}\label{rst.rem}
(a) Algorithm~\ref{rst.cons} starts with
the base case when $C(\kappa)=\varnothing$.
It should be noted for the base case
that
${J}_{\mu,\mu'}^{(\kappa)} = {J'}_{\mu,\mu'}^{(\kappa)} = \varnothing$,
and therefore, that
$\Phi_{\mu,\mu'}^{(\kappa)}$
is the identity map from
$[0,\mu\wedge\mu'(\kappa))$ to itself.
(b)
The inverse $\Phi_{\mu,\mu'}^{-1}$ becomes an RSB
of $(\mu',\mu)$ by Algorithm~\ref{rst.cons}.
\end{remark}

Assuming that the underlying distribution functions $F_\alpha$ and $F_\beta$
are indexed by $\alpha$ and $\beta$,
we refer $\mu_{(\alpha,\beta)}$ and $\mu_{\alpha}$ respectively to
\eqref{ab.dist} and \eqref{a.dist}.

\begin{lemma}\label{abcd.pair}
Let $F_a,F_b,F_c,F_d$ be distribution functions on $\mathcal{S}$.
If $F_a,F_b\preceq F_c,F_d$
then $\mu_{(a,c)}$ and $\mu_{(b,d)}$ are mutually interlaced.
\end{lemma}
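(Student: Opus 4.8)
The plan is to verify, for each $\kappa\in K$, the defining inequality $\mu_{(a,c)}\vee\mu_{(b,d)}(\kappa-)\le\mu_{(a,c)}\wedge\mu_{(b,d)}(\kappa)$ of mutual interlacing. First I would note that $\mu_{(a,c)}$ is interlaced not only with $F_a$, via the chain $F_a\preceq F_a\preceq F_c$, but equally with $F_c$, via $F_a\preceq F_c\preceq F_c$ (the outer comparabilities being trivial and $F_a\preceq F_c$ the hypothesis), so Lemma~\ref{interlace.lem}(i) applies twice; symmetrically $\mu_{(b,d)}$ is interlaced with both $F_b$ and $F_d$. In particular $\mu_{(a,c)}$ and $\mu_{(b,d)}$ are distribution functions on $K$, so mutual interlacing makes sense for the pair, and feeding these four interlacings into \eqref{interlace} gives, for every $\kappa$, the chains $\mu_{(a,c)}(\kappa-)\le F_a\wedge F_c(u_*^{(\kappa)})$ and $F_a\vee F_c(u_*^{(\kappa)})\le\mu_{(a,c)}(\kappa)$, and likewise $\mu_{(b,d)}(\kappa-)\le F_b\wedge F_d(u_*^{(\kappa)})$ and $F_b\vee F_d(u_*^{(\kappa)})\le\mu_{(b,d)}(\kappa)$.

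It then remains to interpose a single ``crossing'' bound, $F_a\wedge F_c(u_*^{(\kappa)})\le F_b\vee F_d(u_*^{(\kappa)})$, together with its counterpart under $(a,c)\leftrightarrow(b,d)$. I would prove this by the dichotomy --- already recorded in the excerpt --- that the closed section $(\leftarrow,u_*^{(\kappa)}]$ is either an up-set or a down-set of $\mathcal{S}$. If it is an up-set, then evaluating the hypothesis $F_a,F_b\preceq F_c,F_d$ on $(\leftarrow,u_*^{(\kappa)}]$ reads $F_a(u_*^{(\kappa)}),F_b(u_*^{(\kappa)})\le F_c(u_*^{(\kappa)}),F_d(u_*^{(\kappa)})$, whence $F_a\wedge F_c(u_*^{(\kappa)})=F_a(u_*^{(\kappa)})\le F_d(u_*^{(\kappa)})\le F_b\vee F_d(u_*^{(\kappa)})$; if $(\leftarrow,u_*^{(\kappa)}]$ is a down-set the four inequalities reverse and $F_a\wedge F_c(u_*^{(\kappa)})=F_c(u_*^{(\kappa)})\le F_b(u_*^{(\kappa)})\le F_b\vee F_d(u_*^{(\kappa)})$. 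The counterpart follows by the symmetry of the hypothesis under $(a,c)\leftrightarrow(b,d)$. Chaining the pieces then yields $\mu_{(a,c)}(\kappa-)\le F_a\wedge F_c(u_*^{(\kappa)})\le F_b\vee F_d(u_*^{(\kappa)})\le\mu_{(b,d)}(\kappa)$ and, symmetrically, $\mu_{(b,d)}(\kappa-)\le\mu_{(a,c)}(\kappa)$; combined with the trivial $\mu_{(a,c)}(\kappa-)\le\mu_{(a,c)}(\kappa)$ and $\mu_{(b,d)}(\kappa-)\le\mu_{(b,d)}(\kappa)$ (themselves parts of the interlacings), this is exactly $\mu_{(a,c)}\vee\mu_{(b,d)}(\kappa-)\le\mu_{(a,c)}\wedge\mu_{(b,d)}(\kappa)$.

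I do not anticipate a serious obstacle. The two facts carrying the weight --- that every closed section is a down-set or an up-set of $\mathcal{S}$, and that $\mu_{(a,c)}$ and $\mu_{(b,d)}$ are genuine interlaced distribution functions (this is where Class~W, through the minimality or maximality of the tails $u_*^{(\kappa)}$, actually enters) --- are already available from the observation preceding the definition of $F\preceq F'$ and from Lemma~\ref{interlace.lem}(i). What is left is order-theoretic bookkeeping on the rooted plane tree $(K,1)$: keeping track of which of $F_a,F_c$ (resp.\ $F_b,F_d$) realizes the maximum in \eqref{ab.dist} according to whether $(\leftarrow,u_*^{(\kappa)}]$ is an up-set or a down-set, and confirming that the crossing inequalities, once inserted into \eqref{interlace}, really do sandwich $\mu_{(a,c)}\vee\mu_{(b,d)}(\kappa-)$ below $\mu_{(a,c)}\wedge\mu_{(b,d)}(\kappa)$. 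No probabilistic ingredient (Strassen's theorem, explicit couplings) is needed; the argument stays entirely at the level of the order $\preceq$ on distribution functions.
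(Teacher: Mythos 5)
Your argument is correct, and it reaches the conclusion by a somewhat different decomposition than the paper's own proof. You reduce everything to Lemma~\ref{interlace.lem}(i) applied with the reflexive chains $F_a\preceq F_a\preceq F_c$, $F_a\preceq F_c\preceq F_c$ (and likewise for $(b,d)$), so that each of $\mu_{(a,c)}$, $\mu_{(b,d)}$ is interlaced with both of its defining distribution functions, and then the only new ingredient is the single crossing bound $F_a\wedge F_c(u_*^{(\kappa)})\le F_b\vee F_d(u_*^{(\kappa)})$ (and its mirror image), proved from the up-set/down-set dichotomy for $(\leftarrow,u_*^{(\kappa)}]$; chaining through \eqref{interlace} then gives the mutual-interlacing inequality at every $\kappa$, including the trivial case $C(\kappa)=\varnothing$. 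The paper instead argues directly from \eqref{ab.dist}: it uses the same dichotomy to identify which component attains the maximum on the children sections, yielding $\mu_{(\alpha,\beta)}(\kappa-)=\mu_\alpha(\kappa-)$ as in \eqref{abcd.k-}, and then pivots through $F_c\wedge F_d(u_*^{(\kappa)})$ with a further sub-case on $(\leftarrow,u_1^{(\kappa)}]$ to identify $\mu_{(a,c)}\wedge\mu_{(b,d)}(\kappa)$ as in \eqref{abcd.k}. Your route is shorter and makes transparent why no common middle function $F$ with $F_a,F_b\preceq F\preceq F_c,F_d$ is needed (the crossing inequality replaces it); what the paper's more explicit computation buys is precisely the identifications \eqref{abcd.k-} and \eqref{abcd.k}, which are cited again verbatim in the proof of Lemma~\ref{abcd.rst}, so your more modular proof would still leave those identities to be extracted separately there.
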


\begin{proof}
Suppose that $(\leftarrow,u_*^{(\kappa)}]$ is an up-set of $\mathcal{S}$.
Then we find $(\leftarrow,u_*^{(\kappa)})$ a down-set,
and therefore, we can observe that
\begin{equation}\label{abcd.k-}
  \mu_{(\alpha,\beta)}(\kappa-) = \mu_\alpha(\kappa-)
\end{equation}
for $\alpha\in\{a,b\}$ and $\beta\in\{c,d\}$.
Thus, we obtain
\begin{align}
  \nonumber
  &\mu_{(a,c)}\vee\mu_{(b,d)}(\kappa-)
  = \mu_a\vee\mu_b(\kappa-)
  \le F_c\wedge F_d(u_*^{(\kappa)})
  \\ & \hspace{6ex} \label{abcd.k}
  \le \mu_{(a,c)}\wedge\mu_{(b,d)}(\kappa)
  = \begin{cases}
    \mu_a\wedge\mu_b(\kappa)
    & \mbox{if $(\leftarrow,u_1^{(\kappa)}]$ is a down-set;} \\
    \mu_c\wedge\mu_d(\kappa)
    & \mbox{if $(\leftarrow,u_1^{(\kappa)}]$ is an up-set,}
  \end{cases}
\end{align}
The case for $(\leftarrow,u_*^{(\kappa)}]$ being a down-set can be similarly verified.
\end{proof}

We note in Lemma~\ref{abcd.pair} that
the interlaced pair $(\mu_{(a,c)},\mu_{(b,d)})$
does not require the existence of $F$
such that $F_a,F_b\preceq F\preceq F_c,F_d$.
Furthermore, we obtain the well-definedness of RSB's in the sense of the
following lemma.

\begin{lemma}\label{abcd.rst}
Let $F_a,F_b,F_c,F_d$ be distribution functions on $\mathcal{S}$ satisfying
$F_a,F_b\preceq F_c,F_d$,
and let $\Phi_{(\alpha,\beta),(\alpha',\beta')}^{(\kappa)}$ be the RSB constructed by
Algorithm~\ref{rst.cons}
for an interlaced pair $(\mu_{(\alpha,\beta)},\mu_{(\alpha',\beta')})$
with $\alpha,\alpha'\in\{a,b\}$ and $\beta,\beta'\in\{c,d\}$.
If the bijection of Algorithm~\ref{rst.cons}
is uniquely assigned for each pair
$(J_{\mu,\mu'}^{(\kappa)},J_{\mu,\mu'}'^{(\kappa)})$ of \eqref{j.domain} and \eqref{j.range}
then we have
\begin{equation*}
  \Phi_{(a,c),(b,d)}^{(\kappa)}
  = \Phi_{(b,c),(b,d)}^{(\kappa)}\circ\Phi_{(a,c),(b,c)}^{(\kappa)}
  = \Phi_{(a,d),(b,d)}^{(\kappa)}\circ\Phi_{(a,c),(a,d)}^{(\kappa)}
\end{equation*}
from $[0,\mu_{(a,c)}\wedge\mu_{(b,d)}(\kappa))$ to itself.
\end{lemma}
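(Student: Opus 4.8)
The plan is to prove the two equalities by induction on the rooted plane tree $(K,1)$, treating the first equality $\Phi_{(a,c),(b,d)}^{(\kappa)} = \Phi_{(b,c),(b,d)}^{(\kappa)}\circ\Phi_{(a,c),(b,c)}^{(\kappa)}$ in detail (the second is obtained symmetrically, or by applying the first equality to the dual poset $\mathcal{S}^*$ with the roles of $\{a,b\}$ and $\{c,d\}$ swapped, using Remark~\ref{rst.rem}(b)). First I would record, via Lemma~\ref{abcd.pair}, that all three pairs $(\mu_{(a,c)},\mu_{(b,d)})$, $(\mu_{(b,c)},\mu_{(b,d)})$, $(\mu_{(a,c)},\mu_{(b,c)})$ are mutually interlaced, so each RSB in the statement is well-defined by Algorithm~\ref{rst.cons}; I would also note the elementary identities $\mu_{(a,c)}\wedge\mu_{(b,d)} = \mu_{(a,c)}\wedge\mu_{(b,c)}\wedge\mu_{(b,d)} = \mu_{(a,c)}\wedge\mu_{(b,c)}$ that follow from $F_b\preceq F_d$ combined with the down-set/up-set dichotomy for $(\leftarrow,u_1^{(\kappa)}]$, so that all three maps genuinely have the common domain $[0,\mu_{(a,c)}\wedge\mu_{(b,d)}(\kappa))$ asserted; here the hypothesis that the bijection on each pair $(J,J')$ is \emph{uniquely assigned} is what guarantees that the ``$J$-part'' of the three RSBs is consistent across the factorization.

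The base case is $C(\kappa)=\varnothing$: by Remark~\ref{rst.rem}(a) every RSB is then the identity on $[0,\mu\wedge\mu'(\kappa))$, and since the three common domains coincide the identity equals identity$\,\circ\,$identity. For the inductive step, suppose $C(\kappa)=\{\sigma_1,\ldots,\sigma_M\}\neq\varnothing$ and the claimed factorization holds at every $\sigma_i$. I would split the domain $[0,\mu_{(a,c)}\wedge\mu_{(b,d)}(\kappa))$ into the ``children blocks'' $\mu^{(\kappa)}\lfloor\sigma_i\rfloor + [0,\mu\wedge\mu'(\sigma_i))$ (with the superscript bookkeeping of \eqref{lex.floor}), the identity interval $[\mu\vee\mu'(\kappa-),\mu\wedge\mu'(\kappa))$, and the set $J^{(\kappa)}$ of \eqref{j.domain}. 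On a children block, formula \eqref{rst.sigma} expresses each of the three RSBs at level $\kappa$ in terms of the corresponding RSB at $\sigma_i$ conjugated by the shift $\omega\mapsto\omega-\mu^{(\kappa)}\lfloor\sigma_i\rfloor$ on the way in and $\omega\mapsto\omega+\mu'^{(\kappa)}\lfloor\sigma_i\rfloor$ on the way out; the key point is that these floor-offsets telescope correctly under composition — the ``output offset'' of $\Phi^{(\sigma_i)}_{(a,c),(b,c)}$ is $\mu_{(b,c)}^{(\kappa)}\lfloor\sigma_i\rfloor$, which is exactly the ``input offset'' expected by $\Phi^{(\sigma_i)}_{(b,c),(b,d)}$ — so the composition at level $\kappa$ restricted to a children block reduces precisely to the composition at level $\sigma_i$, and the induction hypothesis finishes that piece. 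On the identity interval all three maps are the identity, and on $J^{(\kappa)}$ the uniqueness hypothesis forces the three $J$-parts to be the \emph{same} bijection onto the common $J'^{(\kappa)}$, so the composition there is just that bijection composed appropriately with an identity, again matching.

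The main obstacle I anticipate is the bookkeeping on the set $J^{(\kappa)}$: one must check that $J_{(a,c),(b,d)}^{(\kappa)}$, $J_{(a,c),(b,c)}^{(\kappa)}$ and $J_{(b,c),(b,d)}^{(\kappa)}$ — and their images $J'^{(\kappa)}$ — are literally the same subsets of $\mathbb{R}$, so that ``uniquely assigned'' applies and the composition makes sense set-theoretically. This hinges on the identities $\mu_{(a,c)}(\kappa-)=\mu_{(b,c)}(\kappa-)$ and $\mu_{(b,c)}(\kappa)\wedge\mu_{(b,d)}(\kappa) = \mu_{(a,c)}(\kappa)\wedge\mu_{(b,c)}(\kappa)$, which again come from \eqref{abcd.k-}, \eqref{abcd.k} and the $F_b\preceq F_d$, $F_a\preceq F_c$ inequalities, together with the fact — established in the proof of Lemma~\ref{abcd.pair} — that $\mu_{(\alpha,\beta)}(\kappa)$ depends only on whether $(\leftarrow,u_1^{(\kappa)}]$ is a down-set or an up-set (equal to $\mu_\alpha\wedge\mu_\beta(\kappa)$ in the two cases via $F_\alpha\vee F_\beta$). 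Once those equalities of intervals are in hand, the verification reduces to the routine ``block-by-block'' argument sketched above, and the second factorization follows by the dual argument.
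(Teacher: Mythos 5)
Your overall skeleton (induction on $(K,1)$, splitting the domain into children blocks, an identity interval, and the $J$-part, with the offsets telescoping via \eqref{rst.sigma}) matches the paper's proof, but the step that carries the real content --- the $J$-part --- rests on claims that are false in general. You assert that $J_{\mu_{(a,c)},\mu_{(b,d)}}^{(\kappa)}$, $J_{\mu_{(a,c)},\mu_{(b,c)}}^{(\kappa)}$ and $J_{\mu_{(b,c)},\mu_{(b,d)}}^{(\kappa)}$ (and their primed images) are literally the same sets, supported by identities such as $\mu_{(a,c)}\wedge\mu_{(b,d)}(\kappa)=\mu_{(a,c)}\wedge\mu_{(b,c)}(\kappa)$ and $\mu_{(a,c)}(\kappa-)=\mu_{(b,c)}(\kappa-)$. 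Neither holds in both cases: by \eqref{ab.dist}, if $(\leftarrow,u_1^{(\kappa)}]$ is an up-set then $\mu_{(a,c)}\wedge\mu_{(b,c)}(\kappa)=F_c(u_1^{(\kappa)})$ while $\mu_{(a,c)}\wedge\mu_{(b,d)}(\kappa)=F_c\wedge F_d(u_1^{(\kappa)})$, and these differ since $F_c,F_d$ need not be comparable (so the three maps also do not share a common domain --- one only has containment); and if $(\leftarrow,u_*^{(\kappa)}]$ is an up-set then $\mu_{(a,c)}(\kappa-)=\mu_a(\kappa-)$ while $\mu_{(b,c)}(\kappa-)=\mu_b(\kappa-)$. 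In fact the three $J$-sets are never all equal in the way you need: when $(\leftarrow,u_*^{(\kappa)}]$ is an up-set, \eqref{abcd.k-} gives $J_{\mu_{(b,c)},\mu_{(b,d)}}^{(\kappa)}={J'}_{\mu_{(b,c)},\mu_{(b,d)}}^{(\kappa)}=\varnothing$ while $J_{\mu_{(a,c)},\mu_{(b,c)}}^{(\kappa)}=J_{\mu_{(a,c)},\mu_{(b,d)}}^{(\kappa)}$ (both built from $(\mu_a,\mu_b)$) is generally nonempty; when it is a down-set the roles swap, with $J_{\mu_{(a,c)},\mu_{(b,c)}}^{(\kappa)}=\varnothing$ and the other two built from $(\mu_c,\mu_d)$. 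So ``the assigned bijection composed with an identity'' is the right slogan, but your justification collapses, and you never determine which factor supplies the bijection and which supplies the identity.

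That case split is exactly the missing idea, and it is how the paper argues: for $(\leftarrow,u_*^{(\kappa)}]$ an up-set, \eqref{abcd.k-}--\eqref{abcd.k} show the middle factor $\Phi_{(b,c),(b,d)}^{(\kappa)}$ has empty $J$-part and therefore acts as the identity on ${J'}_{\mu_{(a,c)},\mu_{(b,c)}}^{(\kappa)}\cup[\mu_a\vee\mu_b(\kappa-),\mu_{(b,c)}\wedge\mu_{(b,d)}(\kappa))$ --- for the pieces $\mu_b^{(\kappa)}\lfloor\sigma_i\rfloor+[\mu_a\wedge\mu_b(\sigma_i),\mu_b(\sigma_i))$ inside children blocks this uses that they lie in the identity regions of the child maps, which follows from the mutual interlacing of Lemma~\ref{abcd.pair} applied to the pairs involving $a$ --- while $(J_{\mu_{(a,c)},\mu_{(b,c)}}^{(\kappa)},{J'}_{\mu_{(a,c)},\mu_{(b,c)}}^{(\kappa)})$ coincides with $(J_{\mu_{(a,c)},\mu_{(b,d)}}^{(\kappa)},{J'}_{\mu_{(a,c)},\mu_{(b,d)}}^{(\kappa)})$, so the uniqueness hypothesis forces the same bijection there; the down-set case is the mirror image with $\Phi_{(a,c),(b,c)}^{(\kappa)}$ as the trivial factor. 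Your treatment of the children blocks and the identity interval is essentially sound (up to noting that the three identity intervals are nested rather than equal), but without the up-set/down-set case analysis and the empty-$J$ observation the crucial step is not established.
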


\begin{proof}
For each $\kappa\in K$
we can show
\begin{equation}\label{abcd.comp}
  \Phi_{(a,c),(b,d)}^{(\kappa)}
  = \Phi_{(b,c),(b,d)}^{(\kappa)}\circ\Phi_{(a,c),(b,c)}^{(\kappa)}
\end{equation}
mapping from
\begin{equation*}
  J_{\mu_{(a,c)},\mu_{(b,d)}}^{(\kappa)}\cup
  [\mu_{(a,c)}\vee\mu_{(b,d)}(\kappa-),\mu_{(a,c)}\wedge\mu_{(b,d)}(\kappa))
\end{equation*}
to
\begin{equation*}
  {J'}_{\mu_{(a,c)},\mu_{(b,d)}}^{(\kappa)}\cup
  [\mu_{(a,c)}\vee\mu_{(b,d)}(\kappa-),\mu_{(a,c)}\wedge\mu_{(b,d)}(\kappa))
\end{equation*}
when $(\leftarrow,u_*^{(\kappa)}]$ is an up-set of $\mathcal{S}$.
We can similarly handle the case when $(\leftarrow,u_*^{(\kappa)}]$ is a down-set.
As in the proof of Lemma~\ref{abcd.pair},
we can observe \eqref{abcd.k-} and \eqref{abcd.k}.
The map $\Phi_{(a,c),(b,c)}^{(\kappa)}$ is identity on
\begin{equation*}
  [\mu_a\vee\mu_b(\kappa-),\mu_{(a,c)}\wedge\mu_{(b,c)}(\kappa)),
\end{equation*}
and it is bijective from
\begin{equation*}
  J_{\mu_{(a,c)},\mu_{(b,c)}}^{(\kappa)} =
  [\mu_a(\kappa-),\mu_a\vee\mu_b(\kappa-))
    \cup\bigcup_{\sigma_i\in C(\kappa)}
    \left\{
    \mu_a^{(\kappa)}\lfloor\sigma_i\rfloor +
       \left[\mu_a\wedge\mu_b(\sigma_i),\mu_a(\sigma_i)\right)
         \right\}
\end{equation*}
to
\begin{equation*}
  J_{\mu_{(a,c)},\mu_{(b,c)}}'^{(\kappa)} =
  [\mu_b(\kappa-),\mu_a\vee\mu_b(\kappa-))
    \cup\bigcup_{\sigma_i\in C(\kappa)}
    \left\{
    \mu_b^{(\kappa)}\lfloor\sigma_i\rfloor +
       \left[\mu_a\wedge\mu_b(\sigma_i),\mu_b(\sigma_i)\right)
         \right\}
\end{equation*}
Since
\begin{math}
  J_{\mu_{(b,c)},\mu_{(b,d)}}^{(\kappa)}
  = {J'}_{\mu_{(b,c)},\mu_{(b,d)}}^{(\kappa)}
  = \varnothing ,
\end{math}
the map $\Phi_{(b,c),(b,d)}^{(\kappa)}$ is identity on
\begin{equation*}
  J_{\mu_{(a,c)},\mu_{(b,c)}}'^{(\kappa)}\cup
  [\mu_a\vee\mu_b(\kappa-),\mu_{(b,c)}\wedge\mu_{(b,d)}(\kappa)) .
\end{equation*}
The map $\Phi_{(a,c),(b,d)}^{(\kappa)}$ is identity on
\begin{equation*}
  [\mu_a\vee\mu_b(\kappa-),\mu_{(a,c)}\wedge\mu_{(b,d)}(\kappa)),
\end{equation*}
and the same bijection of $\Phi_{(a,c),(b,c)}^{(\kappa)}$ is assigned to
the map $\Phi_{(a,c),(b,d)}^{(\kappa)}$
from $J_{\mu_{(a,c)},\mu_{(b,d)}}^{(\kappa)}$
to $J_{\mu_{(a,c)},\mu_{(b,d)}}'^{(\kappa)}$.
Therefore, \eqref{abcd.comp} must hold.
We can similarly verify the equality with the composition
$\Phi_{(a,d),(b,d)}^{(\kappa)}\circ\Phi_{(a,c),(a,d)}^{(\kappa)}$.
\end{proof}

\subsection{Compositions of RSB's}
\label{rsb.comp.sec}

Let $\mathcal{A}$ be a synchronizable poset,
and let $G_\mathcal{A}$ and $G_{\mathcal{A}^*}$ be the graphs of interlaced
relation on the respective vertex set
$D_\mathcal{A}$ of minimal elements of $\mathcal{A}$
and $D_{\mathcal{A}^*}$ of maximal elements of $\mathcal{A}$.
Then $G_\mathcal{A}$ (and respectively $G_{\mathcal{A}^*}$) has
a locally connected spanning tree $T$ (and respectively $T^*$).
Here we consider the Cartesian product of $T$ and $T'$
restricted on the vertex set
\begin{equation*}
  \{(\alpha,\beta)\in D_\mathcal{A}\times D_{\mathcal{A}^*}:
  \text{$\alpha\le\beta$ in $\mathcal{A}$}\},
\end{equation*}
and denote such graph by $T\,\mbox{\scriptsize$\square$}\,T^*$,
in which a pair $((\alpha,\beta),(\alpha',\beta'))$ is an edge
if $\alpha=\alpha'$ and $(\beta,\beta')\in T^*$;
or if $(\alpha,\alpha')\in T$ and $\beta=\beta'$.

\begin{lemma}\label{g.product}
$T\,\mbox{\scriptsize$\square$}\,T^*$ is connected.
\end{lemma}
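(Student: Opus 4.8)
**Proof proposal for Lemma 2.1 (connectedness of $T\,\mbox{\scriptsize$\square$}\,T^*$)**

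The plan is to show that any two vertices $(\alpha,\beta)$ and $(\alpha',\beta')$ of $T\,\mbox{\scriptsize$\square$}\,T^*$ are joined by a path inside the graph, exploiting the local connectedness of the spanning trees $T$ and $T^*$. The natural first move is to try to reduce to the two ``axis'' cases: first change the first coordinate from $\alpha$ to $\alpha'$ keeping the second coordinate fixed, then change the second coordinate from $\beta$ to $\beta'$. The obstruction is that an intermediate vertex $(\gamma,\beta)$ along the $T$-path from $\alpha$ to $\alpha'$ need not satisfy $\gamma\le\beta$ in $\mathcal{A}$, so it may not lie in the vertex set of $T\,\mbox{\scriptsize$\square$}\,T^*$ at all. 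Resolving this is the heart of the argument.

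To handle it, I would first establish a one-step lemma: if $(\alpha,\beta)$ is a vertex (so $\alpha\le\beta$) and $(\alpha,\alpha')\in T$, then there is a path in $T\,\mbox{\scriptsize$\square$}\,T^*$ from $(\alpha,\beta)$ to $(\alpha',\beta')$ for a suitable $\beta'\ge\alpha'$ with $(\beta,\beta')$ a $T^*$-path; symmetrically for a single $T^*$-edge in the second coordinate. The key point for the first-coordinate move is that $\alpha,\alpha'\in D_{\mathcal{A}}(\beta)$ would be needed, but only $\alpha\in D_{\mathcal{A}}(\beta)$ is guaranteed. Instead I would route through a common upper bound: since $(\alpha,\alpha')\in E_{\mathcal{A}}$, by definition of the interlaced-relation graph there is some $\delta\in\mathcal{A}$ with $\alpha,\alpha'<\delta$; pick a maximal element $\beta''\ge\delta$, so $\alpha,\alpha'\le\beta''$ and hence both $(\alpha,\beta'')$ and $(\alpha',\beta'')$ are vertices. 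Because $\alpha\le\beta,\beta''$ we have $\beta,\beta''\in D_{\mathcal{A}^*}(\alpha)$ (interpreting $D_{\mathcal{A}^*}(\alpha)$ dually via~\eqref{d.set} applied to $\mathcal{A}^*$), and local connectedness of $T^*$ gives that the $T^*$-path from $\beta$ to $\beta''$ stays inside $D_{\mathcal{A}^*}(\alpha)$; moving along it in the second coordinate keeps the first coordinate $\alpha\le$ every vertex of that path, so each intermediate pair is a vertex. This connects $(\alpha,\beta)$ to $(\alpha,\beta'')$ inside $T\,\mbox{\scriptsize$\square$}\,T^*$; the single $T$-edge $(\alpha,\alpha')$ then connects $(\alpha,\beta'')$ to $(\alpha',\beta'')$, since both have second coordinate $\beta''\ge\alpha,\alpha'$.

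With this one-step lemma in hand, the global connectedness follows by concatenation. Fix a vertex, say $(\alpha_0,\beta_0)$, and show every vertex $(\alpha,\beta)$ reaches it. Since $T$ is a spanning tree of $G_{\mathcal{A}}$, walk the $T$-path $\alpha=\gamma_0,\gamma_1,\dots,\gamma_r=\alpha_0$; apply the one-step lemma repeatedly to convert it into a walk in $T\,\mbox{\scriptsize$\square$}\,T^*$ from $(\alpha,\beta)$ to some $(\alpha_0,\beta^\sharp)$ with $\beta^\sharp\ge\alpha_0$. Finally, since $T^*$ is a spanning tree of $G_{\mathcal{A}^*}$ and all of $\beta^\sharp,\beta_0\in D_{\mathcal{A}^*}(\alpha_0)$ (because $\alpha_0\le\beta^\sharp$ and $\alpha_0\le\beta_0$), local connectedness of $T^*$ keeps the $T^*$-path from $\beta^\sharp$ to $\beta_0$ inside $D_{\mathcal{A}^*}(\alpha_0)$, so moving along it in the second coordinate (with first coordinate fixed at $\alpha_0$) stays among vertices and reaches $(\alpha_0,\beta_0)$. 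The main obstacle throughout is exactly the bookkeeping that every intermediate pair satisfies the defining inequality $\alpha\le\beta$; local connectedness of the two spanning trees, together with passing through common upper bounds supplied by the edges of $G_{\mathcal{A}}$, is precisely what makes this work.
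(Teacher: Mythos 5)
Your argument is correct, but it takes a genuinely different route from the paper. The paper proves the lemma by contradiction: it lays out the $T$-path $(a_1,\ldots,a_n)$ and the $T^*$-path $(b_1,\ldots,b_m)$ between the two coordinates, locates a ``corner'' where $(a_i,b_j),(a_{i+1},b_{j+1})$ lie in $T\,\mbox{\scriptsize$\square$}\,T^*$ but $(a_i,b_{j+1}),(a_{i+1},b_j)$ do not, and then uses the local connectedness of $T$ (applied to $D_{\mathcal{A}}(b_j)$ and $D_{\mathcal{A}}(b_{j+1})$, with a common minimal element $c$ below $b_j,b_{j+1}$ supplied by the edge $(b_j,b_{j+1})\in T^*$) to force a cycle in $T$, contradicting acyclicity. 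You instead give a direct construction: for each $T$-edge $(\alpha,\alpha')$ you pass the second coordinate up to a maximal element $\beta''$ above a common upper bound of $\alpha,\alpha'$, using the local connectedness of $T^*$ to keep all intermediate pairs inside the vertex set, and then concatenate along the $T$-path and finish with one more $T^*$-path move. Interestingly, each proof uses the local connectedness of only one of the two trees (the paper uses $T$, you use $T^*$), so both show slightly more than the stated hypotheses require, in dual ways. What the paper's argument buys that yours does not is the byproduct recorded just before Lemma~\ref{rst.welldef}: its grid/corner analysis shows that the two endpoints can be joined by a path that stays on the grid $\{(a_i,b_j)\}$ and hence has the minimum size $N=n+m-2$, a fact used later for the well-definedness of the compositions $\Phi_\Xi$. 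Your detour through auxiliary maximal elements $\beta''$ generally produces longer paths off that grid, so it establishes connectedness but not the minimum-length refinement.
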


\begin{proof}
We prove it by contradiction.
Suppose that $(a_1,b_1),(a_n,b_m)\in T\,\mbox{\scriptsize$\square$}\,T^*$
are disconnected.
Let $(a_1,\ldots,a_n)$ and $(b_1,\ldots,b_m)$
be the path respectively in $T$ and $T^*$.
Then we can find
$(a_i,b_j),(a_{i+1},b_{j+1})\in T\,\mbox{\scriptsize$\square$}\,T^*$
such that
$(a_i,b_{j+1}),(a_{i+1},b_j)\not\in T\,\mbox{\scriptsize$\square$}\,T^*$.
Since $(b_j,b_{j+1})\in T^*$
there exists some $c\in D_{\mathcal{A}}$ satisfying $c<b_j,b_{j+1}$.
The local connectedness of $T$ implies that
$D_{\mathcal{A}}(b_j)$ contains the path $(a_i,\ldots,c)$ in $T$
and $D_{\mathcal{A}}(b_{j+1})$ contains the path $(a_{i+1},\ldots,c)$ in $T$.
Since $a_{i+1}\not\in D_{\mathcal{A}}(b_j)$
and $a_i\not\in D_{\mathcal{A}}(b_{j+1})$,
the path $(a_i\ldots,c',\ldots,a_{i+1})$ contains
some vertex $c'\in D_{\mathcal{A}}(b_j)\cap D_{\mathcal{A}}(b_{j+1})$.
Because $(a_i,a_{i+1})\in T$,
it contradicts the property of $T$ being acyclic.
\end{proof}

In the setting of Remark~\ref{interlace.rem}(a)
we can associate each directed edge
$e=((\alpha,\beta),(\alpha',\beta'))$ in $T\,\mbox{\scriptsize$\square$}\,T^*$
with the interlaced pair $(\mu_{(\alpha,\beta)},\mu_{(\alpha',\beta')})$
and the RSB $\Phi_e$ by Algorithm~\ref{rst.cons}.
We express a path $\Xi=(e_1,\ldots,e_N)$ by a series of directed
edges $e_1,\ldots,e_N$ in $T\,\mbox{\scriptsize$\square$}\,T^*$.
Then we can build the composition $\Phi_{\Xi}$ of RSB's along the path $\Xi$ by
\begin{equation}\label{path.rst}
  \Phi_{\Xi}=\Phi_{e_N}\circ\cdots\circ\Phi_{e_1},
\end{equation}
where we set $\Phi_{\varnothing}(\omega)=\omega$
for the empty path $\Xi=\varnothing$ from a vertex to itself.

In Lemma~\ref{rst.welldef}
consider paths $(a_1,\ldots,a_n)$ and $(b_1,\ldots,b_m)$ respectively
in $T$ and $T^*$.
The proof of Lemma~\ref{g.product} also indicates that
a path $\Xi=(e_1,\ldots,e_N)$ in $T\,\mbox{\scriptsize$\square$}\,T^*$
from $(a_1,b_1)$ to $(a_n,b_m)$ can be constructed with
the minimum size $N=n+m-2$.

\begin{lemma}\label{rst.welldef}
Suppose that $\Xi$ and $\Xi'$ are paths of the size $N=n+m-2$
from $(a_1,b_1)$ to $(a_n,b_m)$ in
$T\,\mbox{\scriptsize$\square$}\,T^*$.
If the choice of bijections is predetermined for Algorithm~\ref{rst.cons}
as described in Lemma~\ref{abcd.rst} then
we have $\Phi_{\Xi}=\Phi_{\Xi'}$.
\end{lemma}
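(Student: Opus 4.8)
The plan is to show that any two size-$(n+m-2)$ paths from $(a_1,b_1)$ to $(a_n,b_m)$ in $T\,\square\,T^*$ give the same composition of RSB's, by reducing to a single elementary commuting square and then using connectedness of the ``grid'' of minimal-length paths. First I would observe that a path of length $n+m-2$ from $(a_1,b_1)$ to $(a_n,b_m)$ must use each of the $a$-steps along $(a_1,\ldots,a_n)$ exactly once and each of the $b$-steps along $(b_1,\ldots,b_m)$ exactly once, and in each step moves ``forward'' (toward $a_n$ or toward $b_m$); thus such a path is exactly a monotone lattice path in the grid whose vertices are $(a_i,b_j)$ with $(a_i,b_j)\in T\,\square\,T^*$. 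Any two such lattice paths are connected by a finite sequence of elementary moves, each replacing a corner $(a_i,b_j)\to(a_{i+1},b_j)\to(a_{i+1},b_{j+1})$ by the other corner $(a_i,b_j)\to(a_i,b_{j+1})\to(a_{i+1},b_{j+1})$ (or vice versa), where in the intermediate stages all four grid vertices $(a_i,b_j),(a_{i+1},b_j),(a_i,b_{j+1}),(a_{i+1},b_{j+1})$ lie in $T\,\square\,T^*$. This last point is exactly what the proof of Lemma~\ref{g.product} supplies: whenever a minimal path is forced to go through $(a_i,b_j)$ and $(a_{i+1},b_{j+1})$ while, say, $(a_{i+1},b_j)\notin T\,\square\,T^*$, the local connectedness and acyclicity of $T$ produce a common vertex $c'\in D_{\mathcal A}(b_j)\cap D_{\mathcal A}(b_{j+1})$ on the $T$-path from $a_i$ to $a_{i+1}$, forcing $a_i=c'=a_{i+1}$, i.e. the corner actually collapses; so genuinely distinct corners always have all four vertices present and the elementary swap is legitimate.

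Next I would reduce $\Phi_\Xi=\Phi_{\Xi'}$ to the commutation of RSB's across one such elementary square. Writing $a=a_i,\ b=a_{i+1}$ and $c=b_j,\ d=b_{j+1}$ (after relabelling so that $a,b\in D_{\mathcal A}$, $c,d\in D_{\mathcal A^*}$, and $a,b\le c,d$ in $\mathcal A$, which holds because these are grid vertices of $T\,\square\,T^*$), the two corner routes around the square give the compositions $\Phi_{(b,c),(b,d)}^{(1)}\circ\Phi_{(a,c),(b,c)}^{(1)}$ and $\Phi_{(a,d),(b,d)}^{(1)}\circ\Phi_{(a,c),(a,d)}^{(1)}$. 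By Lemma~\ref{abcd.rst}, with the bijection choices predetermined as assumed, both of these equal $\Phi_{(a,c),(b,d)}^{(1)}$; hence the two corner routes agree. Since $\Phi_{e_N}\circ\cdots\circ\Phi_{e_1}$ is built by composition, replacing a sub-path of $\Xi$ by the other corner of an elementary square leaves $\Phi_\Xi$ unchanged, and iterating over the finitely many elementary moves carrying $\Xi$ to $\Xi'$ yields $\Phi_\Xi=\Phi_{\Xi'}$.

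There is one technical point I would be careful about: Lemma~\ref{abcd.rst} asserts the identity $\Phi_{(a,c),(b,d)}^{(\kappa)}=\Phi_{(b,c),(b,d)}^{(\kappa)}\circ\Phi_{(a,c),(b,c)}^{(\kappa)}=\Phi_{(a,d),(b,d)}^{(\kappa)}\circ\Phi_{(a,c),(a,d)}^{(\kappa)}$ for \emph{every} $\kappa\in K$, in particular for $\kappa=1$, so the elementary-square commutation is available verbatim; I would just note that all four RSB's in a square are simultaneously well-defined because the hypothesis $F_a,F_b\preceq F_c,F_d$ needed for Lemma~\ref{abcd.rst} is precisely encoded by $(a,c),(b,c),(a,d),(b,d)$ all being vertices of $T\,\square\,T^*$ together with the defining property of the graphs $G_{\mathcal A}$ and $G_{\mathcal A^*}$ (each edge of $T$ sits below a common element of $\mathcal A$, each edge of $T^*$ above one, and the grid constraint $\alpha\le\beta$). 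I expect the main obstacle to be bookkeeping rather than conceptual: carefully setting up the correspondence between minimal-length paths and monotone lattice paths, and verifying that the sequence of elementary moves between two lattice paths never leaves the set of grid vertices of $T\,\square\,T^*$ — which, as noted, is exactly the content already extracted from the proof of Lemma~\ref{g.product}.
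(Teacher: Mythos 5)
Your reduction of the lemma to commutation across a single elementary square (via Lemma~\ref{abcd.rst} with $\kappa=1$), and the observation that minimum-size paths are exactly the monotone interleavings of the unique $T$-path $(a_1,\ldots,a_n)$ and $T^*$-path $(b_1,\ldots,b_m)$, are sound and in the same spirit as the paper. The genuine gap is the step that licenses the corner swaps: you claim that whenever the diagonal vertices $(a_i,b_j)$ and $(a_{i+1},b_{j+1})$ lie in $T\,\mbox{\scriptsize$\square$}\,T^*$, both off-diagonal corners do too (``genuinely distinct corners always have all four vertices present''), citing the proof of Lemma~\ref{g.product}. That proof derives a contradiction only when \emph{both} $(a_i,b_{j+1})$ and $(a_{i+1},b_j)$ are absent, so it gives that at least one off-diagonal corner is present --- not both; and the ``collapse'' $a_i=c'=a_{i+1}$ you invoke cannot occur, since $a_i$ and $a_{i+1}$ are distinct endpoints of an edge of $T$. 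A square with exactly three corners in $T\,\mbox{\scriptsize$\square$}\,T^*$ (say $a_i\le b_j,b_{j+1}$ and $a_{i+1}\le b_{j+1}$ but $a_{i+1}\not\le b_j$) is fully consistent with local connectedness of $T$ and $T^*$, and at such a square the flip you rely on is unavailable. Hence the flip-connectivity of monotone lattice paths within the allowed region is precisely what still requires proof, and your justification of it fails.

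The paper closes this hole by inducting on the path length: if $\Xi$ and $\Xi'$ end with different edges, it considers the terminal run $((a_{n-k},b_{m-1}),(a_{n-k},b_m),\ldots,(a_n,b_m))$ of $\Xi'$. Since $(a_{n-k},b_{m-1})$ lies on $\Xi'$ and $(a_n,b_{m-1})$ lies on $\Xi$, local connectedness of $T$ forces every intermediate $(a_{n-j},b_{m-1})$ into $T\,\mbox{\scriptsize$\square$}\,T^*$, so the induced subposet on $\{a_{n-k},\ldots,a_n,b_{m-1},b_m\}$ is complete bipartite and Lemma~\ref{abcd.rst} can be applied repeatedly along that run to move the final $b$-step to the end; the induction hypothesis then applies to the truncated paths. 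If you wish to keep your global flip formulation, you would need an argument of this kind --- for instance using that, by local connectedness, each row and each column of the grid meets $T\,\mbox{\scriptsize$\square$}\,T^*$ in an interval --- in place of the four-corner claim.
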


\begin{proof}
We start with the vacuous claim on
$\Xi=\varnothing$ from $(a_1,b_1)$ to itself,
and prove $\Phi_{\Xi}=\Phi_{\Xi'}$ by induction on the size $N=n+m-2$.
If both $\Xi$ and $\Xi'$ end with the same edge
$e=((a_{n},b_{m-1}),(a_{n},b_{m}))$ or
$e'=((a_{n-1},b_{m}),(a_{n},b_{m}))$ then
we have $\Phi_{\Xi}=\Phi_{\Xi'}$ by induction.
Otherwise, we can assume that
$\Xi$ and $\Xi'$ end with the respective edge
$e$ and $e'$.
The path $\Xi'$ ends with the tail
\begin{equation*}
  ((a_{n-k},b_{m-1}),(a_{n-k},b_{m}),\cdots,(a_{n},b_{m}))
\end{equation*}
with some $k\ge 1$.
Since $(a_{n},b_{m-1})\in T\,\mbox{\scriptsize$\square$}\,T^*$,
we can find the complete bipartite Hasse diagram
of the induced subposet of $\mathcal{A}$ on
$\{a_{n-k},\ldots,a_{n},b_{m-1},b_{m}\}$.
By applying Lemma~\ref{abcd.rst} repeatedly we can show
$\Phi_{\Xi'}=\Phi_{\Xi''}$ with the path $\Xi''$ which changes the
tail of $\Xi'$ to
$((a_{n-k},b_{m-1}),\cdots,(a_{n},b_{m-1}),(a_{n},b_{m}))$.
By using the induction hypothesis for the paths $\Xi-e$ and $\Xi''-e$
we have established $\Phi_{\Xi}=\Phi_{\Xi''}$.
\end{proof}

\subsection{Proof of Theorem~\ref{me.sync} for Class W}
\label{rsb.proof.sec}

Suppose that $\mu$ and $\mu'$ are interlaced with $F_\gamma$,
that is, that $\mu$, $\mu'$ and $F_\gamma$ satisfy
$\mu(1)=\mu'(1)=F_\gamma(\tau)$ and
\begin{equation*}
  \mu\vee\mu'(\kappa-)\le F_\gamma(u_*^{(\kappa)})\le F_\gamma(u_1^{(\kappa)})\le\mu\wedge\mu'(\kappa)
\end{equation*}
for each $\kappa\in K$.
In Algorithm~\ref{rstg.cons} we generalize an RSB $\Phi_{\mu,\mu'}^{(\kappa)}$
of the mutually interlaced pair $(\mu,\mu')$,
and construct an RSB $\Phi_{\mu,\mu',\gamma}^{(\kappa)}$ of $(\mu,\mu')$
for $F_\gamma$.

\begin{algorithm}\label{rstg.cons}
We construct a measure-preserving bijection $\Phi_{\mu,\mu',\gamma}^{(\kappa)}$
from $[0,F_\gamma(u_1^{(\kappa)}))$ to itself recursively for $\kappa\in K$.
If $C(\kappa)\neq\varnothing$ then
we have an RSB $\Phi_{\mu,\mu',\gamma}^{(\sigma_i)}$
from $[0,F_\gamma(u_1^{(\sigma_i)}))$ to itself
for each $\sigma_i\in C(\kappa)$ by recursion.
For $\Phi_{\mu,\mu',\gamma}^{(\kappa)}$
we construct \eqref{rst.sigma}
from
$\mu^{(\kappa)}\lfloor\sigma_i\rfloor +
[0,F_\gamma(u_1^{(\sigma_i)}))$
to
$\mu'^{(\kappa)}\lfloor\sigma_i\rfloor +
[0,F_\gamma(u_1^{(\sigma_i)}))$,
and set the identity map
$\Phi_{\mu,\mu',\gamma}^{(\kappa)}$
on the interval
\begin{equation}\label{rstg.id}
  \left[F_\gamma(u_*^{(\kappa)}),F_\gamma(u_1^{(\kappa)})\right).
\end{equation}
Furthermore, we set $I_{\mu,\gamma}^{(\kappa)}$ and $I_{\mu',\gamma}^{(\kappa)}$
by (\ref{map.i})
respectively for $(\mu,F_\gamma)$ and $(\mu',F_\gamma)$,
and extend a measure-preserving bijection
$\Phi_{\mu,\mu',\gamma}^{(\kappa)}$
from $I_{\mu,\gamma}^{(\kappa)}$
to $I_{\mu',\gamma}^{(\kappa)}$ of the equal length
\begin{equation*}
  \lambda(I_{\mu,\gamma}^{(\kappa)}) = \lambda(I_{\mu',\gamma}^{(\kappa)})
  = F_\gamma(u_*^{(\kappa)})
  - \sum_{\sigma_i\in C(\kappa)}F_\gamma(u_1^{(\sigma_i)})
\end{equation*}
\end{algorithm}

\begin{remark}\label{rstg.rem}
(a)
In the base case when $C(\kappa)=\varnothing$,
the map $\Phi_{\mu,\mu',\gamma}^{(\kappa)}$ is identity
on the interval \eqref{rstg.id},
and it is bijective from
$I_{\mu,\gamma}^{(\kappa)}=[0,F_\gamma(u_*^{(\kappa)}))$
to itself.
In Remark~\ref{rst.rem}(a)
we found it different from the base case of Algorithm~\ref{rst.cons}.
(b)
The RSB $\Phi_{(\mu,\mu')}^{(\kappa)}$ of Algorithm~\ref{rst.cons}
is bijective when restricted on $[0,F_\gamma(u_1^{(\kappa)}))$,
and it is viewed as an RSB of $(\mu,\mu')$ for $F_\gamma$.
(c)
Suppose that $\mu$, $\mu'$ and $\mu''$ are interlaced with $F_\gamma$.
Then the composition $\Phi_{\mu',\mu'',\gamma}^{(\kappa)}\circ\Phi_{\mu,\mu',\gamma}^{(\kappa)}$
is an RSB of $(\mu,\mu'')$ for $F_\gamma$ by Algorithm~\ref{rstg.cons}.
However, it may not be
an RSB of $(\mu,\mu'')$ by Algorithm~\ref{rst.cons}.
\end{remark}

\begin{lemma}\label{map.unique}
Let $X_{\mu,\gamma}^{(\kappa)}$ and $X_{\mu',\gamma}^{(\kappa)}$
be the recursive inverse transforms from $[0,F_{\gamma}(u_1^{(\kappa)}))$ to $S^{(\kappa)}$
by Algorithm~\ref{map.alg}.
Then we have
\begin{math}
  X_{\mu,\gamma}^{(\kappa)}
  =X_{\mu',\gamma}^{(\kappa)}\circ\Phi_{\mu,\mu',\gamma}^{(\kappa)} .
\end{math}
\end{lemma}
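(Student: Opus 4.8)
The plan is to prove Lemma~\ref{map.unique} by induction on the rooted plane tree $(K,1)$, mirroring the recursive structure that defines both $X_{\mu,\gamma}^{(\kappa)}$ (Algorithm~\ref{map.alg}) and $\Phi_{\mu,\mu',\gamma}^{(\kappa)}$ (Algorithm~\ref{rstg.cons}). Throughout, the domain is $[0,F_\gamma(u_1^{(\kappa)}))$, which is precisely the domain of $\Phi_{\mu,\mu',\gamma}^{(\kappa)}$ and the restriction of the domain of $X_{\mu,\gamma}^{(\kappa)}$; note $\mu(1)=\mu'(1)=F_\gamma(\tau)$ makes the endpoint match at the root, and the interlacing inequality $F_\gamma(u_1^{(\kappa)})\le\mu\wedge\mu'(\kappa)$ guarantees $[0,F_\gamma(u_1^{(\kappa)}))$ sits inside both $[0,\mu(\kappa))$ and $[0,\mu'(\kappa))$.

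First I would handle the base case $C(\kappa)=\varnothing$. Here $\hat S^{(\kappa)}$ is the linearly ordered path $\hat{\mathbf u}^{(\kappa)}$, and by \eqref{map.interval} we have $X_{\mu,\gamma}^{(\kappa)}(\omega)=x_i$ iff $F_\gamma(x_i-)\le\omega<F_\gamma(x_i)$; crucially this characterization depends only on $F_\gamma$ and not on $\mu$, so $X_{\mu,\gamma}^{(\kappa)}=X_{\mu',\gamma}^{(\kappa)}$ as maps on $[0,F_\gamma(u_1^{(\kappa)}))$ (with $\mu(\kappa-)=\mu'(\kappa-)=0$). By Remark~\ref{rstg.rem}(a), $\Phi_{\mu,\mu',\gamma}^{(\kappa)}$ is a bijection of $I_{\mu,\gamma}^{(\kappa)}=[0,F_\gamma(u_*^{(\kappa)}))$ to itself and the identity on $[F_\gamma(u_*^{(\kappa)}),F_\gamma(u_1^{(\kappa)}))$. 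On the identity part the claim is trivial; on $[0,F_\gamma(u_*^{(\kappa)}))$ we have $X_{\mu',\gamma}^{(\kappa)}(\omega)=u_*^{(\kappa)}=x_1$ for every $\omega$ in that interval (this is the $C(\kappa)=\varnothing$ instance of Remark~\ref{map.rem}), so $X_{\mu',\gamma}^{(\kappa)}\circ\Phi_{\mu,\mu',\gamma}^{(\kappa)}(\omega)=u_*^{(\kappa)}=X_{\mu,\gamma}^{(\kappa)}(\omega)$. Thus equality holds everywhere.

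Next, the inductive step with $C(\kappa)=\{\sigma_1,\dots,\sigma_M\}\ne\varnothing$, assuming $X_{\mu,\gamma}^{(\sigma_i)}=X_{\mu',\gamma}^{(\sigma_i)}\circ\Phi_{\mu,\mu',\gamma}^{(\sigma_i)}$ for each $i$. I would split $[0,F_\gamma(u_1^{(\kappa)}))$ into three regions according to how $\Phi_{\mu,\mu',\gamma}^{(\kappa)}$ is built: (1) the translated child blocks $\mu^{(\kappa)}\lfloor\sigma_i\rfloor+[0,F_\gamma(u_1^{(\sigma_i)}))$, (2) the identity interval \eqref{rstg.id}, and (3) the remaining set $I_{\mu,\gamma}^{(\kappa)}$ that gets bijected onto $I_{\mu',\gamma}^{(\kappa)}$. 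On region (1): by \eqref{map.recursion}, $X_{\mu,\gamma}^{(\kappa)}$ restricted to the $\sigma_i$-block equals $X_{\mu,\gamma}^{(\sigma_i)}(\cdot-\mu^{(\kappa)}\lfloor\sigma_i\rfloor)$; by \eqref{rst.sigma} for $\Phi_{\mu,\mu',\gamma}^{(\kappa)}$ and the fact that $X_{\mu',\gamma}^{(\kappa)}$ restricted to the $\sigma_i'$-block of the range equals $X_{\mu',\gamma}^{(\sigma_i)}(\cdot-\mu'^{(\kappa)}\lfloor\sigma_i\rfloor)$, the composition $X_{\mu',\gamma}^{(\kappa)}\circ\Phi_{\mu,\mu',\gamma}^{(\kappa)}$ on the $\sigma_i$-block unwinds to $X_{\mu',\gamma}^{(\sigma_i)}\circ\Phi_{\mu,\mu',\gamma}^{(\sigma_i)}(\cdot-\mu^{(\kappa)}\lfloor\sigma_i\rfloor)$, which by induction equals $X_{\mu,\gamma}^{(\sigma_i)}(\cdot-\mu^{(\kappa)}\lfloor\sigma_i\rfloor)=X_{\mu,\gamma}^{(\kappa)}$. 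On region (2), $\Phi$ is the identity and both inverse transforms return values along $\hat{\mathbf u}^{(\kappa)}$ via \eqref{map.interval}, which depends only on $F_\gamma$; so equality holds. On region (3): $\Phi_{\mu,\mu',\gamma}^{(\kappa)}$ maps $I_{\mu,\gamma}^{(\kappa)}$ to $I_{\mu',\gamma}^{(\kappa)}$, and by Remark~\ref{map.rem} every $\omega\in I_{\mu,\gamma}^{(\kappa)}$ has $X_{\mu,\gamma}^{(\kappa)}(\omega)=u_*^{(\kappa)}$ while every $\omega'\in I_{\mu',\gamma}^{(\kappa)}$ has $X_{\mu',\gamma}^{(\kappa)}(\omega')=u_*^{(\kappa)}$; hence $X_{\mu',\gamma}^{(\kappa)}\circ\Phi_{\mu,\mu',\gamma}^{(\kappa)}(\omega)=u_*^{(\kappa)}=X_{\mu,\gamma}^{(\kappa)}(\omega)$.

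The main obstacle I anticipate is bookkeeping region (3) and its interplay with region (2): one must verify that the three regions really partition $[0,F_\gamma(u_1^{(\kappa)}))$ on the domain side (and likewise on the range side), using the interlacing inequalities $\mu^{(\kappa)}\lfloor\sigma_i\rfloor\le\cdots$ and $\mu(\kappa-)\le F_\gamma(u_*^{(\kappa)})$, and that the gaps between translated child blocks are exactly the pieces $\mu^{(\kappa)}\lfloor\sigma_i\rfloor+[F_\gamma(u_1^{(\sigma_i)}),\mu(\sigma_i))$ appearing in \eqref{map.i}. Once that partition is pinned down, each region's argument is short, so I would state the partition carefully as the first move of the inductive step and then dispatch the three cases as above.
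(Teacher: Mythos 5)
Your proposal is correct and follows essentially the same route as the paper: an induction on the rooted plane tree $(K,1)$ in which $[0,F_\gamma(u_1^{(\kappa)}))$ is split into the translated child blocks, the set $I_{\mu,\gamma}^{(\kappa)}$ of \eqref{map.i} (mapped onto $I_{\mu',\gamma}^{(\kappa)}$, where both transforms return $u_*^{(\kappa)}$), and the identity interval \eqref{rstg.id}. Your treatment merely makes explicit the base case and the partition bookkeeping that the paper's proof leaves implicit.
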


\begin{proof}
For the construction of $X_{\mu',\gamma}^{(\kappa)}$
we observe
(\ref{map.recursion}) in Algorithm~\ref{map.alg}
and (\ref{map.i}) in Remark~\ref{map.rem},
and obtain the recursive representation
of $X_{\mu',\gamma}^{(\kappa)}\circ\Phi^{(\kappa)}$ by
\begin{equation*}
  \begin{cases}
    X_{\mu',\gamma}^{(\sigma_i)}
    (\Phi^{(\sigma_i)}(\omega-\mu_{\beta}^{(\kappa)}\lfloor\sigma_i\rfloor))
    & \mbox{if $\omega\in
      \mu^{(\kappa)}\lfloor\sigma_i\rfloor
      + \left[0,F_\gamma(u_1^{(\sigma_i)})\right)$} \\
      & \mbox{\hspace{5ex} for some $\sigma_i\in C(\kappa)$;} \\
      u_*^{(\kappa)}
      & \mbox{if $\omega\in I_{\mu,\gamma}^{(\kappa)}$;} \\
      X_{\mu',\gamma}^{(\kappa)}(\omega)
      & \mbox{if $\omega\in
        \left[F_\gamma(u_*^{(\kappa)}),F_\gamma(u_1^{(\kappa)})\right)$}
  \end{cases}
\end{equation*}
which equals the inverse transform $X_{\mu,\gamma}^{(\kappa)}$ by induction.
\end{proof}

Suppose that $\mathcal{A}$ is a synchronizable poset,
and that $(F_\alpha:\alpha\in\mathcal{A})$ is a stochastically monotone system
of probability distribution functions.
Then we can consider the graph product $T\,\mbox{\scriptsize$\square$}\,T^*$
of Lemma~\ref{g.product},
and associate an edge
$e=((\alpha,\beta),(\alpha',\beta'))\in T\,\mbox{\scriptsize$\square$}\,T^*$
with the interlaced pair $(\mu_{(\alpha,\beta)},\mu_{(\alpha',\beta')})$
of Remark~\ref{interlace.rem}(a).
Then we construct the RSB $\Phi_{\mu_{(\alpha,\beta)},\mu_{(\alpha',\beta')}}$
by Algorithm~\ref{rst.cons},
and simply write $\Phi_e$.

In Lemma~\ref{map.welldef}
we fix $(a_1,b_1)\in T\,\mbox{\scriptsize$\square$}\,T^*$.
By Lemma~\ref{rst.welldef} we can set the well-defined composition \eqref{path.rst}
along a path $\Xi=(e_1,\ldots,e_N)$ of minimum size from $(a_1,b_1)$ to
$(\alpha,\beta)\in T\,\mbox{\scriptsize$\square$}\,T^*$,
and simply write $\Phi_{(\alpha,\beta)}$ for $\Phi_{\Xi}$ of \eqref{path.rst}.
For each $(\alpha,\beta)\in T\,\mbox{\scriptsize$\square$}\,T^*$
and each $\gamma\in\mathcal{A}$ satisfying $\alpha\le\gamma\le\beta$,
we write $X_{(\alpha,\beta),\gamma}$ for
the inverse transform $X_{\mu_{(\alpha,\beta)},\gamma}^{(1)}$ of Lemma~\ref{map.unique}.

\begin{lemma}\label{map.welldef}
Let $(\alpha,\beta),(\alpha',\beta')\in T\,\mbox{\scriptsize$\square$}\,T^*$
and $\gamma\in\mathcal{A}$.
If $\alpha,\alpha'\le\gamma\le\beta,\beta'$
then
\begin{equation}\label{x.gamma}
  X_{(\alpha,\beta),\gamma}\circ\Phi_{(\alpha,\beta)}
  =X_{(\alpha',\beta'),\gamma}\circ\Phi_{(\alpha',\beta')}
\end{equation}
\end{lemma}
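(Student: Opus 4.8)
The plan is to regard $X_{(\alpha,\beta),\gamma}\circ\Phi_{(\alpha,\beta)}$ as a function of the vertex $(\alpha,\beta)$ ranging over $V_\gamma=\{(\alpha,\beta)\in T\,\square\,T^*:\alpha\le\gamma\le\beta\}$ and to show that it does not depend on the choice of $(\alpha,\beta)\in V_\gamma$. First I would observe that the subgraph of $T\,\square\,T^*$ induced on $V_\gamma$ is exactly the Cartesian product of the restriction of $T$ to $D_{\mathcal{A}}(\gamma)$ and the restriction of $T^*$ to $D_{\mathcal{A}^*}(\gamma)$; by the local connectedness of $T$ and $T^*$ both restrictions are subtrees, hence their product, and so the induced subgraph on $V_\gamma$, is connected. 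It therefore suffices to prove $X_{(\alpha,\beta),\gamma}\circ\Phi_{(\alpha,\beta)}=X_{(\alpha',\beta'),\gamma}\circ\Phi_{(\alpha',\beta')}$ whenever $(\alpha,\beta)$ and $(\alpha',\beta')$ are the endpoints of a single edge $e$ of that subgraph, and then propagate the equality around the connected graph.

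So fix such an edge $e$ and write $v=(\alpha,\beta)$, $v'=(\alpha',\beta')$ for brevity; these two vertices agree in one coordinate and differ by a single step of $T$ or $T^*$ in the other. Since $T$, $T^*$, and hence $T\,\square\,T^*$ are bipartite, the distances of $v$ and $v'$ from the base vertex $(a_1,b_1)$ in $T\,\square\,T^*$ differ by exactly one; relabel so that $v$ is the farther one. Prepending any minimum-size path from $(a_1,b_1)$ to $v'$ to the edge $e$ oriented from $v'$ to $v$ then yields a path from $(a_1,b_1)$ to $v$ of minimum size, so Lemma~\ref{rst.welldef} lets us write $\Phi_{v}=\Phi_{e}\circ\Phi_{v'}$, where $\Phi_{e}$ is the RSB of the interlaced pair $(\mu_{v'},\mu_{v})$ from Algorithm~\ref{rst.cons}.

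Next I would feed this into Lemma~\ref{map.unique}. Because $v,v'\in V_\gamma$, stochastic monotonicity gives $F_{\alpha'}\preceq F_\gamma\preceq F_{\beta'}$ and $F_\alpha\preceq F_\gamma\preceq F_\beta$, so Lemma~\ref{interlace.lem}(i) shows that $\mu_{v'}$ and $\mu_{v}$ are both interlaced with $F_\gamma$; by Remark~\ref{rstg.rem}(b) the bijection $\Phi_{e}$, restricted to $[0,F_\gamma(\tau))=[0,1)$, is then an RSB of $(\mu_{v'},\mu_{v})$ for $F_\gamma$, and Lemma~\ref{map.unique} at $\kappa=1$ yields $X_{v',\gamma}=X_{v,\gamma}\circ\Phi_{e}$. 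Combining the two identities, $X_{v,\gamma}\circ\Phi_{v}=X_{v,\gamma}\circ\Phi_{e}\circ\Phi_{v'}=X_{v',\gamma}\circ\Phi_{v'}$, which is the edgewise equality; propagating it through the connected induced subgraph on $V_\gamma$ gives \eqref{x.gamma}.

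The step I expect to require the most care is the factorization $\Phi_{v}=\Phi_{e}\circ\Phi_{v'}$: neither of $\Phi_{v}$, $\Phi_{v'}$ is a priori obtained from the other by appending $e$ in a prescribed orientation, and it is precisely the bipartiteness of $T\,\square\,T^*$, forcing $v$ and $v'$ to lie at distances from $(a_1,b_1)$ differing by exactly one, that lets one pick the orientation so that the extended path still has minimum size, whence Lemma~\ref{rst.welldef} applies. A secondary point worth recording is that Lemma~\ref{map.unique}, although stated for the RSB produced by Algorithm~\ref{rstg.cons}, holds verbatim for \emph{any} RSB of $(\mu,\mu')$ for $F_\gamma$, in particular for $\Phi_{e}$: the only place the specific bijection enters the proof of Lemma~\ref{map.unique} is on $I_{\mu,\gamma}^{(\kappa)}$, where the composition returns $u_*^{(\kappa)}$ regardless of which bijection was chosen.
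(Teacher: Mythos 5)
Your proof is correct and follows essentially the same route as the paper: reduce, via local connectedness of $T$ and $T^*$, to the case of a single edge of $T\,\mbox{\scriptsize$\square$}\,T^*$ whose endpoints both satisfy $\alpha\le\gamma\le\beta$, factor the composition $\Phi_{(\alpha,\beta)}$ through that edge using Lemma~\ref{rst.welldef}, and conclude with Lemma~\ref{map.unique}. Your closing observation that Lemma~\ref{map.unique} applies to the Algorithm~\ref{rst.cons} bijection $\Phi_e$ via Remark~\ref{rstg.rem}(b) is precisely the identification the paper uses implicitly, and your bipartiteness/distance bookkeeping is just an equivalent phrasing of the paper's ``without loss of generality'' choice of which endpoint lies on the tree path from $a_1$.
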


\begin{proof}
If $\Xi$ is a path of minimum size from $(\alpha,\beta)$
to $(\alpha',\beta')$ then each $(\alpha'',\beta'')\in\Xi$ satisfies
$\alpha''\le\gamma\le\beta''$.
Hence, it suffices to show \eqref{x.gamma} for
$((\alpha,\beta),(\alpha',\beta'))\in T\,\mbox{\scriptsize$\square$}\,T^*$.
Consider the case when $\beta=\beta'$ and $(\alpha,\alpha')\in T$.
Without loss of generality
we have paths $\xi=(a_1,\ldots,a_{n-1},\alpha)$
and $\xi'=\xi+(\alpha,\alpha')$ in $T$,
and therefore, we can find a path $\Xi$ of minimum size
from $(a_1,b_1)$ to $(\alpha,\beta)$,
and $\Xi' = \Xi+((\alpha,\beta),(\alpha',\beta'))$
from $(a_1,b_1)$ to $(\alpha',\beta')$.
By Lemma~\ref{map.unique} we have
\begin{equation*}
  X_{(\alpha,\beta),\gamma}\circ\Phi_{\Xi}
  = X_{(\alpha',\beta'),\gamma}\circ\Phi_{\mu_{(\alpha,\beta)},\mu_{(\alpha',\beta')}}\circ\Phi_{\Xi}
  = X_{(\alpha',\beta'),\gamma}\circ\Phi_{\Xi'}
\end{equation*}
which implies \eqref{x.gamma} for $\beta=\beta'$.
The case when $\alpha=\alpha'$ and $(\beta,\beta')\in T^*$
is similarly handled.
\end{proof}

By Lemma~\ref{map.welldef} we can construct a well-defined inverse transform
$X_{(\alpha,\beta),\gamma}\circ\Phi_{(\alpha,\beta)}$ for
each $\gamma\in\mathcal{A}$
by arbitrarily choosing $(\alpha,\beta)\in T\,\mbox{\scriptsize$\square$}\,T^*$
which satisfies $\alpha\le\gamma\le\beta$,
and denote it by $X_\gamma$.
Since $\Phi_{(\alpha,\beta)}$ is a measure-preserving bijection from $[0,1)$
to itself, by Proposition~\ref{map.prop} the inverse transform $X_\gamma$ realizes
$F_\gamma$ in the sense of \eqref{map.realize}.
Suppose that $\gamma\le\gamma'$.
Then we can find some $(\alpha,\beta)\in T\,\mbox{\scriptsize$\square$}\,T^*$
satisfying $\alpha\le\gamma\le\gamma'\le\beta$,
and by applying Proposition~\ref{map.prop} for $F_{\gamma}\preceq F_{\gamma'}$
we obtain
\begin{equation*}
  X_{\gamma}(\omega)=X_{(\alpha,\beta),\gamma}\circ\Phi_{(\alpha,\beta)}(\omega)
  \le X_{(\alpha,\beta),\gamma'}\circ\Phi_{(\alpha,\beta)}(\omega)
  =X_{\gamma'}(\omega),
  \quad\omega\in [0,1)
\end{equation*}
Thus,
we found $(F_\alpha:\alpha\in\mathcal{A})$ realizably monotone,
which completes the proof of Theorem~\ref{me.sync} in the case of (i).

\subsection{Proof of Theorem~\ref{me.sync} for Class W$_\star$}
\label{wstar.sec}

Consider the case (ii) of Theorem~\ref{me.sync} where
$\mathcal{S}$ is in Class W$_\star$ and $\mathcal{A}$ is a synchronizable poset for
Class W$_\star$.
We continue to assume the stochastically monotone system $(F_\alpha:\alpha\in\mathcal{A})$
of probability distribution functions.
Then we can associate a directed edge $e=(\alpha,\alpha')\in T$
with the interlaced pair $(\mu_{\alpha},\mu_{\alpha'})$ by \eqref{a.dist}
as in Remark~\ref{interlace.rem}(b),
and construct the RSB $\Phi_e$ by Algorithm~\ref{rst.cons}.
Let $T$ be a locally connected spanning tree on the graph
$G_{\mathcal{A}}$ of interlaced relation,
and $a_1\in T$ be a fixed vertex.
Thus, we obtain a unique path $\xi=(e_1,\ldots,e_N)$
from $a_1$ to a vertex $\alpha\in T$
with directed edges $e_1,\ldots,e_N\in T$,
and define the composition $\Phi_{\xi}$ by
\begin{equation*}
  \Phi_{\xi}=\Phi_{e_N}\circ\cdots\circ\Phi_{e_1},
\end{equation*}
where we stipulate $\Phi_{\varnothing}(\omega)=\omega$ when $\xi=\varnothing$.
Since $\Phi_{\xi}$ is uniquely determined for each $\alpha\in T$,
we simply write $\Phi_\alpha$ for $\Phi_{\xi}$.
In Lemma~\ref{map2.welldef}
for $\alpha\in T$ and $\gamma\in\mathcal{A}$ satisfying $\alpha\le\gamma$,
we write $X_{\alpha,\gamma}$ for the inverse transform $X_{\mu_\alpha,\gamma}^{(1)}$ of
Lemma~\ref{map.unique}.

\begin{lemma}\label{map2.welldef}
Let $\alpha,\alpha'\in T$ and $\gamma\in\mathcal{A}$.
If $\alpha,\alpha'\le\gamma$ then
\begin{equation}\label{x2.gamma}
  X_{\alpha,\gamma}\circ\Phi_{\alpha}=X_{\alpha',\gamma}\circ\Phi_{\alpha'}.
\end{equation}
\end{lemma}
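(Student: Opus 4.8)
The plan is to mimic closely the proof of Lemma~\ref{map.welldef}, replacing the graph product $T\,\mbox{\scriptsize$\square$}\,T^*$ by the single locally connected spanning tree $T$ of $G_{\mathcal{A}}$ and the pair data $(\mu_{(\alpha,\beta)},\mu_{(\alpha',\beta')})$ by the single-index data $(\mu_{\alpha},\mu_{\alpha'})$ of \eqref{a.dist}. First I would reduce the general case to the case of a single edge: if $\xi$ is the unique path in $T$ from $\alpha$ to $\alpha'$, then every intermediate vertex $\alpha''$ on $\xi$ lies in $D_{\mathcal{A}}(\gamma)$, because the local connectedness of $T$ forces the subtree on $D_{\mathcal{A}}(\gamma)$ to be connected and hence to contain the whole $\alpha$-to-$\alpha'$ path; thus $\alpha''\le\gamma$ for each such $\alpha''$, so $\mu_{\alpha''}$ is interlaced with $F_\gamma$ by Lemma~\ref{interlace.lem}(ii). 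Consequently it suffices to prove \eqref{x2.gamma} when $(\alpha,\alpha')\in T$ is a single edge.

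For the single-edge step I would argue exactly as in Lemma~\ref{map.welldef}: without loss of generality take the path $\xi=(a_1,\ldots,a_{n-1},\alpha)$ in $T$ from the fixed root $a_1$ to $\alpha$, so that $\xi'=\xi+(\alpha,\alpha')$ is the path to $\alpha'$ and $\Phi_{\alpha'}=\Phi_{\mu_{\alpha},\mu_{\alpha'}}\circ\Phi_{\alpha}$. Since $\mathcal{S}$ is in Class~$\mathrm{W}_\star$, $F_\alpha\preceq F_\gamma$ and $F_{\alpha'}\preceq F_\gamma$, both $\mu_{\alpha}$ and $\mu_{\alpha'}$ are interlaced with $F_\gamma$ by Lemma~\ref{interlace.lem}(ii), and $(\mu_{\alpha},\mu_{\alpha'})$ is mutually interlaced (either directly as in Remark~\ref{interlace.rem}(b), or because both are interlaced with the common $F_\gamma$). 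Then Lemma~\ref{map.unique} applied to the pair $(\mu_{\alpha},\mu_{\alpha'})$ and the distribution $F_\gamma$ gives $X_{\mu_{\alpha},\gamma}^{(1)} = X_{\mu_{\alpha'},\gamma}^{(1)}\circ\Phi_{\mu_{\alpha},\mu_{\alpha'}}$, i.e.
\begin{equation*}
  X_{\alpha,\gamma}\circ\Phi_{\alpha}
  = X_{\alpha',\gamma}\circ\Phi_{\mu_{\alpha},\mu_{\alpha'}}\circ\Phi_{\alpha}
  = X_{\alpha',\gamma}\circ\Phi_{\alpha'},
\end{equation*}
which is \eqref{x2.gamma} for an edge, and hence along any path by composing edge-by-edge.

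The one point requiring care — and the only real obstacle — is the reduction step: I must be sure that along the $\alpha$-to-$\alpha'$ path in $T$ every intermediate vertex $\alpha''$ satisfies $\alpha''\le\gamma$, so that $X_{\alpha'',\gamma}$ and $\Phi_{\alpha''}$ are even defined. This is exactly where the hypothesis that $T$ is \emph{locally connected} for Class~$\mathrm{W}_\star$ is used: the induced subgraph of $T$ on $D_{\mathcal{A}}(\gamma)$ of \eqref{d.set} is a subtree, so the unique path in $T$ joining the two vertices $\alpha,\alpha'\in D_{\mathcal{A}}(\gamma)$ stays inside $D_{\mathcal{A}}(\gamma)$; no vertex of $D_{\mathcal{A}}\setminus D_{\mathcal{A}}(\gamma)$ is visited. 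Once that is in hand, the rest is a routine induction on the length of the path, applying the single-edge identity and the associativity of composition, with no further subtleties. (The dual statement for Class~$\mathrm{W}^\star$, needed for case (iii) of Theorem~\ref{me.sync}, follows by passing to $\mathcal{S}^*$ and $\mathcal{A}^*$.)
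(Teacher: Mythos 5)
Your proposal is correct and follows essentially the same route as the paper's proof: reduce to a single edge $(\alpha,\alpha')\in T$ via the local connectedness of $T$ (so the path stays in $D_{\mathcal{A}}(\gamma)$ and has upper bound $\gamma$), then apply Lemma~\ref{map.unique} to the interlaced pair $(\mu_{\alpha},\mu_{\alpha'})$ along the paths $\xi$ and $\xi'=\xi+(\alpha,\alpha')$. Your added remarks merely make explicit details the paper leaves implicit, so there is nothing to correct.
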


\begin{proof}
Since a unique path in $T$ from $\alpha$ to $\alpha'$ has an upper bound $\gamma$ in
$\mathcal{A}$,
it suffices to show \eqref{x2.gamma} for $(\alpha,\alpha')\in T$.
As in the proof of Lemma~\ref{map.welldef}
we have a path $\xi$ from $a_1$ to $\alpha$,
and a path $\xi'=\xi+(\alpha,\alpha')$ from $a_1$ to $\alpha'$.
By Lemma~\ref{map.unique} we obtain
\begin{equation*}
  X_{\alpha,\gamma}\circ\Phi_{\xi}
  =X_{\alpha',\gamma}\circ\Phi_{\mu_{\alpha},\mu_{\alpha'}}\circ\Phi_{\xi}
  =X_{\alpha',\gamma}\circ\Phi_{\xi'}
\end{equation*}
as desired.
\end{proof}

For each $\gamma$ the inverse transform $X_{\alpha,\gamma}\circ\Phi_{\alpha}$,
denoted by $X_\gamma$,
is well-defined by Lemma~\ref{map2.welldef}
regardless of $\alpha\in D_{\mathcal{A}}(\gamma)$,
and it realizes $F_\gamma$.
For $\gamma\le\gamma'$
we can find some $\alpha\in T$ such that $\alpha\le\gamma$,
and obtain by Proposition~\ref{map.prop}
\begin{equation*}
  X_{\gamma}(\omega)=X_{\alpha,\gamma}\circ\Phi_{\alpha}(\omega)
  \le X_{\alpha,\gamma'}\circ\Phi_{\alpha}(\omega)=X_{\gamma'}(\omega),
  \quad\omega\in [0,1),
\end{equation*}
which implies that
$(F_\alpha:\alpha\in\mathcal{A})$ is realizably monotone.
Thus, Theorem~\ref{me.sync} holds in the case of (ii),
and the case of (iii) can be dually established.

\section{Monotonicity inequivalence for Class W$_\star$}\label{mi.sec}
\setcounter{equation}{0}\setcounter{figure}{0}

In~\cite{fm2002}
we conjectured that the synchronizability of $\mathcal{A}$
is somewhat necessary for the monotonicity equivalence.
In this section we will complete the proof of

\begin{proposition}\label{main.claim}
If $\mathcal{A}$ is not synchronizable for Class W$_{\star}$
then there is some $n\ge 1$ so that
monotonicity equivalence fails for $(\mathcal{A},\mathcal{S})$
with $(n+2)$-legged W$_{\star}$-poset $\mathcal{S}$
of Figure~\ref{wposet}.
\end{proposition}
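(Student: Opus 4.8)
The plan is to prove the contrapositive directly: assuming $\mathcal{A}$ is not synchronizable for Class~$\mathrm{W}_\star$, I must exhibit an integer $n\ge 1$, a concrete stochastically monotone system $(P_\gamma:\gamma\in\mathcal{A})$ on the $(n+2)$-legged $\mathrm{W}_\star$-poset $\mathcal{S}$, and then argue that no realization $(X_\gamma)$ can make it realizably monotone. The first step is to extract a combinatorial obstruction from the failure of synchronizability. Since no spanning tree of $G_{\mathcal{A}}$ is locally connected, for \emph{every} spanning tree $T$ there is some $\alpha\in\mathcal{A}$ whose restricted vertex set $D_{\mathcal{A}}(\alpha)$ of \eqref{d.set} fails to induce a subtree. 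I would want to distill this into a minimal ``bad cycle'' structure: a cyclic sequence $\beta_1,\beta_2,\dots,\beta_k,\beta_1$ of minimal elements, consecutive pairs joined by edges of $G_{\mathcal{A}}$, such that each consecutive edge $\{\beta_j,\beta_{j+1}\}$ lies below a common $\alpha_j\in\mathcal{A}$, but the whole set $\{\beta_1,\dots,\beta_k\}$ is \emph{not} contained in any single $D_{\mathcal{A}}(\alpha)$ in a way that would let a spanning tree thread through consistently — concretely, a minimal configuration forcing a genuine conflict no matter how one chooses the tree edges. The parameter $n$ (hence the number of legs $n+2$) will be governed by the size of this obstruction.

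Next I would build the distributions. On the $(n+2)$-legged $\mathrm{W}_\star$-poset $\mathcal{S}$ (one bottom vertex $w$ covered by $n+2$ legs, or the iterated version in Figure~\ref{wposet}), I assign to each minimal $\beta_j\in D_{\mathcal{A}}$ a distribution $P_{\beta_j}$ concentrated on a distinguished subset of legs — e.g.\ $P_{\beta_j}$ charges legs $\ell_j$ and $\ell_{j+1}$ — chosen so that: (a) consecutive pairs $\beta_j,\beta_{j+1}$ can be dominated by a common $P_{\alpha_j}$ supported higher up (on $w$ together with the shared leg), making the system stochastically monotone; but (b) any single realization must, leg by leg, transport the mass of each $X_{\beta_j}$ upward into the mass of the dominating variables, and following the cycle around forces the total leg-mass of one realization to exceed what is available — a pigeonhole/counting contradiction. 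For $\gamma\in\mathcal{A}$ not among the $\beta_j$ or $\alpha_j$ I extend the assignment consistently (the simplest choice makes $P_\gamma$ depend only on $D_{\mathcal{A}}(\gamma)$), and I must check stochastic monotonicity on every relation $\gamma<\gamma'$ — this reduces to verifying $P_\gamma(U)\le P_{\gamma'}(U)$ for each up-set $U$ of the $\mathrm{W}_\star$-poset, of which there are few (each up-set is $\{w\}$ together with a choice of legs, or a union of legs), so this is a finite, routine check.

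The core of the argument is the impossibility of a monotone realization. Here I would fix a hypothetical realization $(X_\gamma:\gamma\in\mathcal{A})$ on a common probability space with $X_\gamma\le X_{\gamma'}$ in $\mathcal{S}$ whenever $\gamma<\gamma'$. Because $w$ is the unique minimum of $\mathcal{S}$ and the legs are pairwise incomparable, the event $\{X_\gamma\in\text{leg }\ell\}$ for a higher $\gamma$ must, on the portion of the sample space where a lower $X_{\beta_j}$ already sits in leg $\ell$, continue to sit in leg $\ell$ (it can only move up within the same leg). Tracking this leg-by-leg along the cycle $\beta_1\to\alpha_1\to\beta_2\to\alpha_2\to\cdots\to\beta_k\to\alpha_k\to\beta_1$ — using at each step that two consecutive $\beta$'s share a dominating $\alpha$ whose $X$ forces their leg-supported masses to be nested on a common event — yields a chain of measure inequalities that closes up into a strict inequality $c<c$, the contradiction. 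The technical engine is exactly the Strassen/Kamae--Krengel--O'Brien coupling structure: monotone coupling on a poset with a unique bottom and an antichain of legs behaves like independent couplings on each leg, and the $\mathrm{W}_\star$ shape is the minimal poset where this rigidity bites.

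I expect the \textbf{main obstacle} to be the first step: correctly identifying, from the purely graph-theoretic statement ``no locally connected spanning tree exists,'' the precise minimal cyclic obstruction and the correct value of $n$, and then matching that obstruction to the geometry of the $(n+2)$-legged poset so that the counting argument in step three goes through exactly. The realization-impossibility argument, once the distributions are in hand, should be a relatively mechanical coupling computation; the delicate part is choosing the distributions $P_{\beta_j}$ and $P_{\alpha_j}$ so that stochastic monotonicity holds globally on $\mathcal{A}$ while the cyclic leg-mass constraint is genuinely infeasible — this is where I would spend the most care, likely defining the masses in terms of a small parameter and verifying the two competing inequality systems are simultaneously (stochastic order: satisfiable) and (realizable order: unsatisfiable).
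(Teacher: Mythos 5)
Your overall shape of attack (build an explicitly stochastically monotone system on the multi-legged $\mathrm{W}_\star$-poset and derive a contradiction by mass counting against any monotone realization) is the same as the paper's, but the proposal leaves exactly the hard content unresolved, and the one concrete idea you do offer for it would not work as stated. Non-synchronizability is \emph{not} the presence of a ``bad cycle'' of minimal elements with consecutive pairs dominated by common upper bounds: the poset of Figure~\ref{a.poset}(a) has precisely such a cycle in $G_{\mathcal{A}}$ and is nevertheless synchronizable. The paper's route to a usable obstruction is quite different: it puts the length \eqref{theta} on $G_{\mathcal{A}}$, proves via a Kruskal-type exchange argument that locally connected spanning trees are exactly the minimum-weight ones (Corollary~\ref{l.conn.cor}), takes a minimum-weight $T$ with some $T\cap D_{\mathcal{A}}(a_0)$ disconnected, and then uses the \emph{minimality of the weight} to produce induced fence subposets supported by a forest $T_0$ (Lemma~\ref{fence.lem}); the integer $n$ is not the length of a cycle but the number of fences in a carefully chosen family (the maximality choice of $\mathcal{F}^{(1)}$), and this choice is what yields the counting bound of Lemma~\ref{b.star}, without which the distributions $P_{\beta_*}$ of \eqref{p.beta*} could not even be defined (they would have negative mass at $z$). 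Your proposal explicitly flags this first step as the main obstacle, which is an accurate self-assessment: without it there is no determination of $n$ and no counterexample.

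Two further gaps. First, your distribution design (each $P_{\beta_j}$ on two legs, dominating $P_{\alpha_j}$ on the shared leg plus the extreme vertex, contradiction by a cyclic chain of inequalities) is only a heuristic; the paper's construction is structurally different --- a hub $b_0$ spread uniformly over $n+1$ legs, the $a_1^{(j)}$'s each pinned to one leg, a distinguished leg $y_*$, and the contradiction is that $n+2$ pairwise disjoint events of probability $\tfrac1{n+1}$ would coexist (Lemma~\ref{fail.me}) --- and whether your cyclic version can be made simultaneously stochastically monotone and non-realizable for an \emph{arbitrary} non-synchronizable $\mathcal{A}$ is precisely what you have not shown. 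Second, extending the system from the finite witness subposet to all of $\mathcal{A}$ is not routine: your suggestion that $P_\gamma$ ``depend only on $D_{\mathcal{A}}(\gamma)$'' is unverified, whereas the paper needs a bespoke partition of $\mathcal{A}$ (via $A_0$, $A_1$, $A_*$, $A_2$, Lemma~\ref{part.a}) with formulas depending on $\mathcal{A}(\alpha)\cap C_1$, and the monotonicity check of Lemma~\ref{sm.extension} is a nontrivial case analysis. (Minor but symptomatic: the $(n+2)$-legged $\mathrm{W}_\star$-poset of Figure~\ref{wposet} has a unique \emph{maximum} $z$ over minimal legs, not a unique bottom as in your sketch, so your coupling-rigidity statement would in any case need dualizing.) As it stands the proposal is a plausible plan sharing the paper's strategy, but the decisive combinatorial extraction, the choice of $n$, the explicit measures, and the global monotone extension are all missing.
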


\begin{figure}[h]\begin{center}
\tikzset{circle dotted/.style={line cap=round, line width=1pt, dash pattern=on 0pt off 10pt}}
\begin{tikzpicture}
  \node [mynode,label=above:$z$] (z) at (0,0) {};
  \node [mynode,label=below:$y_0$] (y0) at (-3,-1) {};
  \node [mynode,label=below:$y_1$] (y1) at (-2,-1) {};
  \node [mynode,label=below:$y_n$] (yn) at ( 2,-1) {};
  \node [mynode,label=below:$y_*$] (y*) at ( 3,-1) {};
  \draw[thick]
  (z) -- (y0)
  (z) -- (y1)
  (z) -- (yn)
  (z) -- (y*);
  \draw [circle dotted] (1,-1) -- (-1,-1);
\end{tikzpicture}
\caption{$(n+2)$-legged W$_{\star}$-poset}
\label{wposet}
\end{center}\end{figure}
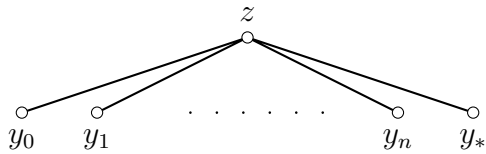

\subsection{Spanning trees and Kruskal algorithm}

Consider a connected graph $G = (D,E)$ with vertex set $D$ and edge set $E$.
Having fixed the vertex set $D$,
we can construct a new supergraph $G+e$ by adding a new edge $e$ to $E$,
or a subgraph $G-e$ by deleting an edge $e$ from $E$.
We can introduce a nonnegative function $\theta$ on the edge set $E$,
and call $\theta(e)$ the \emph{length} of an edge $e$.
For any subgraph $H$ of $G$ we can define the \emph{weight} of $H$ by
\begin{equation*}
  \theta(H) = \sum_{e\in H}\theta(e)
\end{equation*}
Kruskal~\cite{kruskal1956} showed that Algorithm~\ref{kruskal} constructs a minimum weight
spanning tree $T$ of $G$.

\begin{algorithm}[Kruskal algorithm]\label{kruskal}
Let $\mathcal{E} = (e_1,\ldots,e_M)$ be a linearly ordered edge set
by the length so that  
$\theta(e_i)\le\theta(e_j)$ whenever $e_i < e_j$ in $\mathcal{E}$.
It starts from $T_0 = (D,\varnothing)$,
and constructs $T_i$ recursively from $T_{i-1}$ as follows:
If $T_{i-1}+e_i$ forms a loop then we set $T_i = T_{i-1}$;
otherwise, we update $T_i = T_{i-1}+e_i$.
The update is repeated recursively until $T_i$ becomes a spanning tree.
\end{algorithm}

As the construction of Algorithm~\ref{kruskal} suggests,
a minimum weight spanning tree may not be uniquely determined unless the lengths are all distinct.
In Proposition~\ref{descend.alg}
if $T$ is a spanning tree then we can construct a new spanning tree $T' = T+e-f$ by adding a new
edge $e$ and deleting an edge $f$ belonging to the loop formed by $T+e$.
We can show that a series of such constructions can always descend to a
particular minimum weight spanning tree.

\begin{proposition}\label{descend.alg}
Let $T_k$ be a minimum weight spanning tree of $G$.
Starting from any spanning tree $T_0$,
we can find a series of constructions, $T_i = T_{i-1}+e_i-f_i$,
and terminate at the spanning tree $T_k$
while it maintains $\theta(T_i)\le\theta(T_{i-1})$ for $i=1,\ldots,k$.
\end{proposition}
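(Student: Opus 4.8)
The plan is to induct on a suitable ``distance'' between the arbitrary starting tree $T_0$ and the target minimum weight spanning tree $T_k$, measured by the number of edges in $T_0$ that do not belong to $T_k$, i.e.\ on $k = |E(T_0)\setminus E(T_k)|$. If $k=0$ then $T_0 = T_k$ (both are spanning trees with the same vertex set, and $E(T_0)\subseteq E(T_k)$ forces equality), so there is nothing to prove. For the inductive step, suppose $k\ge 1$ and pick an edge $e\in E(T_k)\setminus E(T_0)$; such an edge exists since $T_0\ne T_k$ and both trees have the same number of edges. Adding $e$ to $T_0$ creates a unique loop $L$, and since $T_k$ is acyclic, $L$ must contain some edge $f\in E(T_0)\setminus E(T_k)$. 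Set $T_1 = T_0 + e - f$; this is again a spanning tree, and $|E(T_1)\setminus E(T_k)| = k-1$, so by the induction hypothesis a valid descending series leads from $T_1$ to $T_k$. It remains only to check $\theta(T_1)\le\theta(T_0)$, i.e.\ $\theta(e)\le\theta(f)$.

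The crux of the argument is this weight inequality $\theta(e)\le\theta(f)$, and here I would invoke the minimality of $T_k$ together with an exchange argument. Consider instead adding $f$ to $T_k$: this creates a unique loop $L'$ in $T_k + f$. I claim $e$ lies on $L'$. Indeed, $L'$ is the path in $T_k$ joining the endpoints of $f$ together with $f$ itself; since $f$ lies on the loop $L$ created in $T_0 + e$, the two endpoints of $f$ are joined in $T_0$ by a path through $e$ (the rest of $L$), and removing $f$ from $L$ and replacing with $e$ shows $e$ and $f$ separate the same way. More carefully: in $T_0+e-f$ the two components of $T_0 - f$ get reconnected by $e$, so $e$ crosses the cut $(A,B)$ induced by deleting $f$ from $T_0$; but $T_k$ restricted to this picture, since $f\notin T_k$, must reconnect $A$ and $B$ through a path inside $T_k$, and that path together with $f$ is the loop $L'$, and $e\in T_k$ crosses the same cut, hence $e\in L'$. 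Now $T_k + f - e$ is a spanning tree of weight $\theta(T_k) + \theta(f) - \theta(e)$, and minimality of $T_k$ gives $\theta(T_k)\le\theta(T_k)+\theta(f)-\theta(e)$, i.e.\ $\theta(e)\le\theta(f)$, as required.

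The main obstacle I anticipate is making the ``$e$ lies on the loop $L'$'' claim fully rigorous, since it requires matching up two different cycle/cut structures (the loop in $T_0+e$ versus the loop in $T_k+f$) via the cut that $f$ induces in $T_0$; the clean way to phrase it is through the standard fact that for a spanning tree $T$ and edges $e\notin T$, $f\in T$, one has $f$ on the fundamental cycle of $e$ in $T$ if and only if $e$ is on the fundamental cut of $f$ in $T$, applied here with $T = T_0$ for one direction and $T = T_k$ for the other, using $f\in T_0\setminus T_k$ and $e\in T_k\setminus T_0$. Once that symmetric characterization is in hand, the weight inequality and hence the induction go through without further difficulty, and chaining the single-step constructions $T_0\to T_1\to\cdots\to T_k$ yields the stated series with $\theta(T_i)\le\theta(T_{i-1})$ throughout.
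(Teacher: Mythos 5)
Your induction scheme (descend on $|E(T_0)\setminus E(T_k)|$, exchanging one edge per step) is sound in outline, but the step on which everything rests is not. After picking $e\in E(T_k)\setminus E(T_0)$ and an \emph{arbitrary} $f\in L\cap(E(T_0)\setminus E(T_k))$ on the fundamental cycle $L$ of $e$ in $T_0$, you claim that $e$ lies on the fundamental cycle $L'$ of $f$ in $T_k$, so that $T_k+f-e$ is a spanning tree and minimality of $T_k$ gives $\theta(e)\le\theta(f)$. The justification offered --- $e$ crosses the cut determined by $T_0-f$, and the $T_k$-path joining the endpoints of $f$ also crosses that cut, hence contains $e$ --- does not hold: $T_k$ may have several edges across that cut, and the path can reconnect the two sides through one of the others. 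The cycle/cut duality you invoke is a statement about one fixed tree; you cannot take the cycle in $T_0$ and the cut in $T_k$. Concretely, on vertices $\{1,\dots,5\}$ take $T_0=\{12,23,34,45\}$, $T_k=\{12,13,34,25\}$, $e=25$, so $L=\{25,23,34,45\}$, and choose $f=23$: the fundamental cycle of $23$ in $T_k$ is $\{23,12,13\}$, which misses $e$, and $T_k+f-e$ is not a spanning tree. With $\theta(12)=\theta(13)=\theta(34)=0$, $\theta(23)=1$, $\theta(25)=5$, $\theta(45)=10$, the tree $T_k$ is the minimum weight spanning tree, yet $\theta(e)=5>1=\theta(f)$, so this exchange strictly \emph{increases} the weight. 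Thus the inequality $\theta(e)\le\theta(f)$ genuinely fails for an admissible run of your procedure; the gap is the unjustified (and false) claim $e\in L'$.

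The argument can be repaired by choosing the exchanged pair more carefully (a strong-exchange argument): fix $e\in T_k\setminus T_0$ and let $D$ be the set of edges of $G$ joining the two components of $T_k-e$. The cycle $L$ meets $D$ in an even number of edges and $e\in L\cap D$, so $L$ contains some $f\neq e$ with $f\in D$; since $e$ is the only edge of $T_k$ in $D$, this $f$ lies in $E(T_0)\setminus E(T_k)$. For such $f$ both $T_0+e-f$ and $T_k+f-e$ are spanning trees, minimality of $T_k$ gives $\theta(e)\le\theta(f)$, and your induction then goes through (equivalently, one can take $f$ of maximum weight in $L\cap(E(T_0)\setminus E(T_k))$ and compare it with this cut edge). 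Note that the paper avoids the issue by a different route: it linearly orders all edges by length with a tie-break favoring edges of $T_k$ and penalizing edges of $T_0\setminus T_k$, and runs a Kruskal-type pass that forces each intermediate tree to agree with $T_k$ on the processed edges, so each update $T_i=T_{i-1}+e_i-e_j$ automatically has $\theta(e_i)\le\theta(e_j)$. Either mechanism supplies exactly what your draft is missing: a principled selection of the pair $(e,f)$ for which both exchanges are legitimate.
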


\begin{proof}
By the lengths $\theta(e_i)$'s we can linearly order the edge set
$\mathcal{E}=(e_1,\ldots,e_M)$,
in which a pair $\{e_i,e_j\}$ of the same length
satisfies $e_i < e_j$ if $e_i\in T_k$ or $e_j \in T_0\setminus T_k$.
The subgraph $T_i\cap\{e_1,\ldots,e_i\}$
restricted on the edge set $\{e_1,\ldots,e_i\}$
will be identical to the subgraph $T_k\cap\{e_1,\ldots,e_i\}$
of Algorithm~\ref{kruskal}
in the following construction of $T_i$:
If $e_i\in T_{i-1}$ then we keep $T_i = T_{i-1}$ unchanged.
Otherwise, $T_{i-1}+e_i$ forms a unique loop $C$.
If $C\subseteq\{e_1,\ldots,e_i\}$
then $T_k$ contains $C-e_i$ but it cannot contain $e_i$;
thus, we keep $T_i = T_{i-1}$.
If $C$ contains an edge $e_j > e_i$ in $\mathcal{E}$
then $T_k$ contains $e_i$ but it does not contain $e_j$;
thus, we update $T_i = T_{i-1}+e_i-e_j$
and obtain $\theta(T_i)\le\theta(T_{i-1})$.
Consequently a subsequence of the updates of $T_i$ becomes a desired series of constructions.
\end{proof}

Let $\mathcal{A}$ be a poset,
and consider the graph
$G_{\mathcal{A}}=(D_{\mathcal{A}},E_{\mathcal{A}})$
of interlaced relation
on the set $D_{\mathcal{A}}$ of minimal elements in $\mathcal{A}$.
We define the principal up-set $\mathcal{A}(\beta)$ of $\beta\in\mathcal{A}$ by
\begin{equation*}
  \mathcal{A}(\beta) = \{\alpha\in\mathcal{A}: \mbox{$\beta\le\alpha$ in $\mathcal{A}$}\}
\end{equation*}
and for each $(\beta,\beta')\in E_{\mathcal{A}}$
\begin{equation}\label{theta.set}
  \Theta(\beta,\beta') = \{\gamma\in D_{\mathcal{A}}:
  \mathcal{A}(\beta)\cap\mathcal{A}(\beta')\subseteq\mathcal{A}(\gamma)\}
\end{equation}
Then we can assign a length $\theta(\beta,\beta')$ of the edge
$(\beta,\beta')$ by
\begin{equation}\label{theta}
  \theta(\beta,\beta') = |\Theta(\beta,\beta')|
\end{equation}
where $|\cdot|$ denotes the cardinality.
Using $D_{\mathcal{A}}(\alpha)$ of \eqref{d.set}
we can express the vertex set $\Theta(\beta,\beta')$ equivalently by
\begin{equation}\label{theta.cap}
  \Theta(\beta,\beta')
  = \bigcap_{\alpha\in\mathcal{A}(\beta)\cap\mathcal{A}(\beta')}
  D_{\mathcal{A}}(\alpha) .
\end{equation}

Suppose that $T$ is a spanning tree of $G_{\mathcal{A}}$.
Then the graph $T+(b_0,b_n)$ with an edge $(b_0,b_n)\not\in T$
forms a loop $C = (b_0,b_1,\ldots,b_n,b_0)$.
In Lemma~\ref{l.connected}
we arbitrarily choose an edge $(b_{i-1},b_i)$ from the path $C-(b_0,b_n)$,
and construct a spanning tree $T' = T+(b_0,b_n)-(b_{i-1},b_i)$.

\begin{lemma}\label{l.connected}
If $\theta(T')\le\theta(T)$ and $T$ is locally connected
then $\theta(T')=\theta(T)$ and $T'$ is also locally connected.
\end{lemma}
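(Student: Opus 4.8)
The plan is to exploit the relationship between local connectedness, the length function $\theta$ defined via $\Theta(\beta,\beta')$ of \eqref{theta.set}, and the structure of the loop $C = (b_0,b_1,\ldots,b_n,b_0)$ created by adding the edge $(b_0,b_n)$ to $T$. First I would establish a comparison between $\theta(b_0,b_n)$ and $\theta(b_{i-1},b_i)$. The key observation is that along the path $(b_0,b_1,\ldots,b_n)$ in $T$, the sets $\mathcal{A}(b_0)\cap\mathcal{A}(b_n)$ and $\mathcal{A}(b_{i-1})\cap\mathcal{A}(b_i)$ interact through the tree structure: any $\alpha\in\mathcal{A}$ that lies above both endpoints of the chord must, by local connectedness of $T$, have $D_{\mathcal{A}}(\alpha)$ containing the entire path $(b_0,\ldots,b_n)$, hence in particular containing the edge $(b_{i-1},b_i)$'s endpoints. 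This should give a containment between the corresponding $\Theta$-sets, and since $\theta(T')\le\theta(T)$ forces $\theta(b_0,b_n)\le\theta(b_{i-1},b_i)$, I expect the reverse containment as well, yielding $\Theta(b_0,b_n)=\Theta(b_{i-1},b_i)$ and $\theta(T')=\theta(T)$.

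Next I would verify that $T'$ is locally connected. Fix $\alpha\in\mathcal{A}$; I must show the subgraph of $T'$ on $D_{\mathcal{A}}(\alpha)$ is connected. Since $T$ is locally connected, the subgraph $T|_{D_{\mathcal{A}}(\alpha)}$ is a subtree. Deleting $(b_{i-1},b_i)$ from $T$ and adding $(b_0,b_n)$ affects this only when the relevant vertices lie in $D_{\mathcal{A}}(\alpha)$. The crucial case is when $b_{i-1},b_i\in D_{\mathcal{A}}(\alpha)$, i.e., $\alpha\in\mathcal{A}(b_{i-1})\cap\mathcal{A}(b_i)$: then by the equality $\Theta(b_0,b_n)=\Theta(b_{i-1},b_i)$ together with \eqref{theta.cap}, we also have $\alpha\in\mathcal{A}(b_0)\cap\mathcal{A}(b_n)$ (using that $\alpha\in\mathcal{A}(b_{i-1})\cap\mathcal{A}(b_i)$ implies, via local connectedness of $T$, that the whole loop path lies in $D_{\mathcal{A}}(\alpha)$, hence $b_0,b_n\in D_{\mathcal{A}}(\alpha)$ and $\alpha$ is an upper bound of both). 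Thus both the deleted edge $(b_{i-1},b_i)$ and the added edge $(b_0,b_n)$ have their endpoints in $D_{\mathcal{A}}(\alpha)$, and moreover the entire cycle $C$ lies in $D_{\mathcal{A}}(\alpha)$. Replacing one edge of a cycle by another within a subtree-plus-that-cycle keeps connectivity, so $T'|_{D_{\mathcal{A}}(\alpha)}$ remains a subtree. If instead $(b_{i-1},b_i)$ is not entirely in $D_{\mathcal{A}}(\alpha)$, then deleting it does not disconnect $T|_{D_{\mathcal{A}}(\alpha)}$, and adding $(b_0,b_n)$ can only help; the remaining bookkeeping (e.g., $(b_0,b_n)\in D_{\mathcal{A}}(\alpha)$ but $(b_{i-1},b_i)\notin$) cannot arise precisely because $D_{\mathcal{A}}(\alpha)$ inducing a connected subtree of $T$ forces the chord's endpoints to pull in the path between them.

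The main obstacle I anticipate is the first step: proving the containment $\Theta(b_{i-1},b_i)\subseteq\Theta(b_0,b_n)$ cleanly. This requires carefully unwinding \eqref{theta.set} and using local connectedness of $T$ to argue that an upper bound of $b_{i-1}$ and $b_i$ is forced to be an upper bound of $b_0$ and $b_n$ — which is where the ``locally connected spanning tree'' hypothesis does all its work, since without it the path in $T$ between $b_{i-1}$ and $b_i$ need not be controlled by $\mathcal{A}(b_{i-1})\cap\mathcal{A}(b_i)$. Once that containment is in hand, the hypothesis $\theta(T')\le\theta(T)$ upgrades it to equality almost for free (the two spanning trees differ only in these two edges, so $\theta(T')\le\theta(T)$ is equivalent to $\theta(b_0,b_n)\le\theta(b_{i-1},b_i)$, i.e., $|\Theta(b_0,b_n)|\le|\Theta(b_{i-1},b_i)|$, which combined with the containment gives set equality), and the local connectedness of $T'$ follows by the cycle-swap argument above.
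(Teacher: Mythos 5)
Your proposal is correct and takes essentially the same route as the paper's proof: local connectedness of $T$ together with the intersection formula \eqref{theta.cap} shows that $\Theta(b_0,b_n)$ contains every vertex of the loop path, hence $\mathcal{A}(b_0)\cap\mathcal{A}(b_n)\subseteq\mathcal{A}(b_{i-1})\cap\mathcal{A}(b_i)$ and $\Theta(b_{i-1},b_i)\subseteq\Theta(b_0,b_n)$, the weight inequality $\theta(b_0,b_n)\le\theta(b_{i-1},b_i)$ upgrades this to $\Theta(b_0,b_n)=\Theta(b_{i-1},b_i)$, and the case split on whether both $b_{i-1},b_i$ lie in $D_{\mathcal{A}}(\alpha)$ yields local connectedness of $T'$ exactly as in the paper. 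One minor slip to fix: in your closing paragraph the upper-bound implication is stated backwards --- what is needed (and what your first paragraph correctly argues) is that a common upper bound of $b_0$ and $b_n$ is forced to be a common upper bound of $b_{i-1}$ and $b_i$, not the reverse.
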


\begin{proof}
Since $T$ is locally connected,
the intersection formulation of \eqref{theta.cap}
implies that
$\Theta(b_0,b_n)$ contains the path $C-(b_0,b_n)$.
In particular we have $b_{i-1},b_i\in\Theta(b_0,b_n)$,
and by \eqref{theta.set} we find
\begin{equation*}
  \mathcal{A}(b_0)\cap\mathcal{A}(b_n)\subseteq\mathcal{A}(b_{i-1})\cap\mathcal{A}(b_i).
\end{equation*}
Since $\theta(b_0,b_n)\le\theta(b_{i-1},b_i)$, by \eqref{theta.set}
we obtain
$\Theta(b_0,b_n)=\Theta(b_{i-1},b_i)$,
and therefore, $\theta(b_0,b_n)=\theta(b_{i-1},b_i)$.
In the rest of proof
we claim that the subtree $T'\cap D_{\mathcal{A}}(\alpha)$ of $T'$
restricted on the vertex set $D_{\mathcal{A}}(\alpha)$
is connected for every $\alpha\in\mathcal{A}$.
Suppose that $b_{i-1}\not\in D_{\mathcal{A}}(\alpha)$
or $b_{i}\not\in D_{\mathcal{A}}(\alpha)$.
Then $D_{\mathcal{A}}(\alpha)$ contains only one of the two components
of $T-(b_{i-1},b_i)$,
and therefore,
$T'\cap D_{\mathcal{A}}(\alpha)=T\cap D_{\mathcal{A}}(\alpha)$
is connected.
Suppose that $b_{i-1},b_i\in D_{\mathcal{A}}(\alpha)$.
Then we have
\begin{equation*}
  D_{\mathcal{A}}(\alpha)\supseteq\Theta(b_{i-1},b_i)
  =\Theta(b_0,b_n)\supseteq C-(b_0,b_n)
\end{equation*}
and therefore,
\begin{math}
  T'\cap D_{\mathcal{A}}(\alpha)
  =T\cap D_{\mathcal{A}}(\alpha)+(b_0,b_n)-(b_{i-1},b_i)
\end{math}
is connected.
\end{proof}

We can start with a locally connected spanning tree $T_0$
in Proposition~\ref{descend.alg},
and find the following corollary to
Lemma~\ref{l.connected}.

\begin{corollary}\label{l.conn.cor}
Let $\mathcal{A}$ be a synchronizable poset,
and let $T$ be a spanning tree of $G_{\mathcal{A}}$.
Then $T$ is locally connected if and only if
$T$ attains the minimum weight by the length (\ref{theta}).
\end{corollary}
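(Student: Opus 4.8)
\textbf{Proof proposal for Corollary~\ref{l.conn.cor}.}
The plan is to prove the two implications separately, leveraging Lemma~\ref{l.connected} and Proposition~\ref{descend.alg} together with the hypothesis that $\mathcal{A}$ is synchronizable (so a locally connected spanning tree exists). For the ``only if'' direction, suppose $T$ is locally connected but does not attain the minimum weight under $\theta$. Let $T_*$ be a minimum weight spanning tree; then $\theta(T_*) < \theta(T)$. I would apply Proposition~\ref{descend.alg} starting from $T_0 = T$: this produces a series of constructions $T_i = T_{i-1} + e_i - f_i$ terminating at a minimum weight spanning tree $T_k$, with $\theta(T_i) \le \theta(T_{i-1})$ at each step. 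Since each single construction $T_{i-1} \rightsquigarrow T_i$ is exactly of the form handled by Lemma~\ref{l.connected} (adding an edge $e_i$ and deleting an edge $f_i$ on the loop created), and since each step maintains the weight inequality $\theta(T_i) \le \theta(T_{i-1})$, Lemma~\ref{l.connected} forces both $\theta(T_i) = \theta(T_{i-1})$ and the preservation of local connectedness. Inducting along the series, every $\theta(T_i)$ equals $\theta(T)$, so in particular $\theta(T_k) = \theta(T)$. But $T_k$ has minimum weight, contradicting $\theta(T_*) < \theta(T)$. Hence $T$ must attain the minimum weight.

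For the ``if'' direction, suppose $T$ attains the minimum weight by the length \eqref{theta}. Since $\mathcal{A}$ is synchronizable, there is a locally connected spanning tree $T_0$, and by the ``only if'' direction just proved, $T_0$ also attains the minimum weight, so $\theta(T_0) = \theta(T)$. Now I would run Proposition~\ref{descend.alg} in the reverse-minded way: actually Proposition~\ref{descend.alg} as stated lets us descend from \emph{any} spanning tree $T_0$ to \emph{a} prescribed minimum weight spanning tree $T_k$; so I instead start the descent from the \emph{non}-locally-connected candidate's complement --- more precisely, I take the locally connected $T_0$ as the starting tree and set $T_k = T$ as the target minimum weight spanning tree in Proposition~\ref{descend.alg}. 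This yields a series $T_i = T_{i-1} + e_i - f_i$ from $T_0$ to $T$ with $\theta(T_i) \le \theta(T_{i-1})$. Since $\theta(T_0) = \theta(T)$, every intermediate weight is squeezed to equal $\theta(T_0)$. Applying Lemma~\ref{l.connected} step by step, local connectedness propagates from $T_0$ through every $T_i$, and in particular $T = T_k$ is locally connected.

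The main obstacle I anticipate is verifying that each single descent step in Proposition~\ref{descend.alg} is genuinely an instance of the situation in Lemma~\ref{l.connected}, namely that deleting $f_i$ from the loop formed by $T_{i-1} + e_i$ is the arbitrary-edge-on-the-loop deletion allowed there, and that the weight-monotonicity furnished by Proposition~\ref{descend.alg} is exactly the hypothesis $\theta(T') \le \theta(T)$ required by Lemma~\ref{l.connected}. Once that bookkeeping is in place, the argument is a clean two-directional induction: one direction shows any locally connected tree has minimum weight by descending it (and finding the weight never drops), and the converse shows any minimum weight tree is reachable by descent \emph{from} a locally connected one (with the weight again never dropping, so local connectedness is never lost). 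I would also double-check the edge case where $T_0 = T$ already, in which the descent is vacuous and both claims are immediate.
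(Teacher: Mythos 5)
Your proposal is correct and uses essentially the same argument as the paper: start Proposition~\ref{descend.alg} from a locally connected spanning tree (which exists by synchronizability, or is $T$ itself in the ``only if'' direction) and apply Lemma~\ref{l.connected} at each descent step, which simultaneously freezes the weight and propagates local connectedness down to the prescribed minimum weight tree. The bookkeeping you flag (each step of the descent being an add-one-edge/delete-one-loop-edge move with $\theta(T_i)\le\theta(T_{i-1})$) is indeed exactly the situation of Lemma~\ref{l.connected}, so no gap remains.
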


\subsection{Proof of Proposition~\ref{main.claim}}

Let $T_0$ be a forest subgraph (i.e., an acyclic subgraph)
of the graph $G_{\mathcal{A}} = (D_{\mathcal{A}}, E_{\mathcal{A}})$,
and let $b_0\in T_0$.
An induced $(2l+3)$-fence subposet (Figure~\ref{fence.fig}; cf.~\cite{duffus1992,stanleyvol1}) 
\begin{equation}\label{t.fence}
  b_0<a_1>b_{1}<\ldots>b_{l}<a_{l+1}>b_*
\end{equation}
of $\mathcal{A}$ is said to be supported by $T_0$ from $b_0\in T_0$ to
$b_*\not\in T_0$
if $l\ge 1$ and there exists
a path $(x_0,x_1,\ldots,x_{j_l})$ in $T_0$ from $x_0=b_0$ to $x_{j_l}=b_l$
in which $b_0,b_{1},\ldots,b_{l}$
is identified as a subsequence $x_0,x_{j_1},\ldots,x_{j_l}$
with indices $0 < j_1 < \cdots < j_l$.
In the fence poset all the comparable pairs are expressed in \eqref{t.fence}
and no other pairs are comparable.

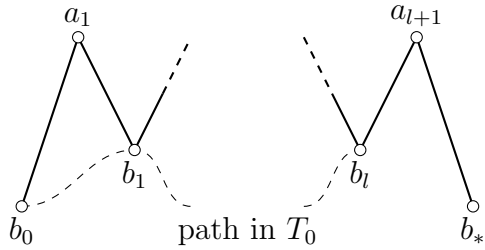
\begin{figure}[h]\begin{center}
\begin{tikzpicture}[xscale=1.5,yscale=1.5]
\draw[dashed] (0,0) to [out=0,in=180] (1,0.5) to [out=0,in=180] (1.5,0);
\draw[dashed] (2.5,0) to [out=0,in=180] (3,0.5);
\draw[thick] (0,0) -- (0.5,1.5) -- (1,0.5) -- (1.25,1);
\draw[dashed, thick] (1.25,1) -- (1.5,1.5);
\draw[dashed, thick] (2.5,1.5) -- (2.75,1);
\draw[thick] (2.75,1) -- (3,0.5) -- (3.5,1.5) -- (4,0);

\draw[fill=white] (0,0) circle [radius=0.05];
\node[below] at (0,0) {$b_0$};
\draw[fill=white] (1,0.5) circle [radius=0.05];
\node[below] at (1,0.5) {$b_1$};
\draw[fill=white] (3,0.5) circle [radius=0.05];
\node[below] at (3,0.5) {$b_l$};
\draw[fill=white] (4,0) circle [radius=0.05];
\node[below] at (4,0) {$b_*$};
\draw[fill=white] (0.5,1.5) circle [radius=0.05];
\node[above] at (0.5,1.5) {$a_1$};
\draw[fill=white] (3.5,1.5) circle [radius=0.05];
\node[above] at (3.5,1.5) {$a_{l+1}$};
\node[below] at (2,0) {path in $T_0$};
\end{tikzpicture}

\caption{
  $(2l+3)$-fence supported by $T_0$ from $b_0\in T_0$ to $b_*\not\in T_0$.
}\label{fence.fig}
\end{center}\end{figure}

In this subsection for the proof of Proposition~\ref{main.claim}
we assume that $\mathcal{A}$ is not synchronizable.
Let $T$ be a minimum weight spanning tree of
the graph $G_{\mathcal{A}} = (D_{\mathcal{A}}, E_{\mathcal{A}})$ by the length~(\ref{theta}),
and choose $a_0\in\mathcal{A}$ so that the subgraph
$T\cap D_{\mathcal{A}}(a_0)$ is disconnected.
For Lemma~\ref{fence.lem}
we can find a vertex $x_0=b_0$ in one component of $T\cap D_{\mathcal{A}}(a_0)$
such that a path $(x_0,\ldots,x_m)$ in $T$ traverses on edges not in $T\cap D_{\mathcal{A}}(a_0)$
and comes back to a vertex $x_m$ in another component of $T\cap D_{\mathcal{A}}(a_0)$.
Since $x_0$ and $x_m$ are disconnected in $T\cap D_{\mathcal{A}}(a_0)$,
we also find $m\ge 2$.
We define a forest by
\begin{equation*}
  T_0 = T\cap((D_{\mathcal{A}}\setminus D_{\mathcal{A}}(a_0))\cup\{b_0\})
\end{equation*}
which contains all the edges of $T$ not ending on
$D_{\mathcal{A}}(a_0)\setminus\{b_0\}$.

\begin{lemma}\label{fence.lem}
There exists an induced $(2l+3)$-fence subposet \eqref{t.fence} of $\mathcal{A}$
supported by $T_0$ from $b_0=x_0$ to $b_*=x_m$.
\end{lemma}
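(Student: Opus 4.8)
The plan is to build the required $(2l+3)$-fence by walking along the path $(x_0,\ldots,x_m)$ in $T$ and, at each step where the path leaves or re-enters a principal up-set, recording an element of $\mathcal{A}$ that certifies an edge of $G_{\mathcal{A}}$. First I would observe that since every edge $\{x_{k-1},x_k\}$ on the path lies in $E_{\mathcal{A}}$, there is some $a\in\mathcal{A}$ with $x_{k-1},x_k<a$; the point is to choose these witnesses carefully so that the resulting zig-zag $b_0<a_1>b_1<\cdots>b_l<a_{l+1}>b_*$ is an \emph{induced} subposet, meaning no spurious comparabilities are introduced. The key structural fact to exploit is that $T$ is a minimum weight spanning tree, hence (by Corollary~\ref{l.conn.cor}, applied to the locally connected subtrees that \emph{do} exist) the edges of $T$ lying inside a principal up-set behave well, while the edges $x_{k-1}x_k$ that the path uses to escape $D_{\mathcal{A}}(a_0)$ are exactly the places where a ``peak'' $a_k$ of the fence must be inserted.

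The main steps I envision: (1) Mark the indices $0=j_0<j_1<\cdots$ along $(x_0,\ldots,x_m)$ at which $x_{j}$ belongs to $D_{\mathcal{A}}(a_0)$ versus not; more precisely, identify the maximal sub-paths that stay outside $D_{\mathcal{A}}(a_0)\setminus\{b_0\}$, i.e. inside $T_0$. The vertices of $T\cap D_{\mathcal{A}}(a_0)$ visited along the way become the candidates for the troughs $b_1,\ldots,b_l$, and the fact that $x_0,x_m$ lie in distinct components of $T\cap D_{\mathcal{A}}(a_0)$ forces $l\ge 1$. (2) For each consecutive pair of troughs (together with $b_0$ at the start and $b_*=x_m$ at the end), pick a peak $a_k\in\mathcal{A}$ above both; since the path between them lies in $T_0$, which is a subtree of $T$, I can take $a_k$ to be a minimal common upper bound, and use the minimum-weight/local-connectedness machinery from Lemma~\ref{l.connected} to argue that this $a_k$ does not dominate any of the \emph{other} troughs $b_{k'}$. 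This is precisely what makes the fence induced. (3) Verify the non-comparability conditions: no $b_k\le b_{k'}$ (they are minimal elements of $\mathcal{A}$, hence incomparable as soon as they are distinct), no $a_k\le a_{k'}$ (if $a_k\le a_{k'}$ then $a_{k'}$ would lie above two troughs on opposite sides, contradicting the choice forced by disconnectedness of $T\cap D_{\mathcal{A}}(a_0)$), and no long-range $b_k\le a_{k'}$ for $|k-k'|\ge 2$ (this is the heart of the argument and uses that $T$ is acyclic together with the intersection formula \eqref{theta.cap} for $\Theta$).

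The hard part will be step (3), establishing that the constructed zig-zag has \emph{no extra} comparabilities — i.e. that $b_k\not\le a_{k'}$ whenever $a_{k'}$ is not one of the two designated peaks flanking $b_k$. My plan for this is a contradiction argument in the style of Lemma~\ref{l.connected}: if $b_k\le a_{k'}$ then $b_k\in D_{\mathcal{A}}(a_{k'})$, so $b_k$ and the two troughs flanking $a_{k'}$ all lie in the single principal up-set $D_{\mathcal{A}}(a_{k'})$; local connectedness of the relevant subtree of $T$ (which follows since $T$ has minimum weight on the components where it \emph{is} locally connected) would then force a $T$-path among these vertices staying inside $D_{\mathcal{A}}(a_{k'})$, and combining this with the segment of $(x_0,\ldots,x_m)$ already used produces a cycle in $T$, contradicting that $T$ is a tree. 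I also need to double-check the boundary cases where $b_0$ or $b_*$ is an endpoint, but those reduce to the generic case since $b_0\in T_0$ by construction and $b_*=x_m\notin T_0$. Finally, reading off $l\ge 1$ and the subsequence condition $0<j_1<\cdots<j_l$ from the path in $T_0$ is immediate from the way the indices were marked in step (1).
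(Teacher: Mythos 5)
There is a genuine gap, and in two places the proposal rests on tools that are not available in this setting. First, the troughs are misidentified. By the definition of ``supported by $T_0$,'' the vertices $b_1,\ldots,b_l$ must lie on a path inside $T_0$, whose vertex set is $(D_{\mathcal{A}}\setminus D_{\mathcal{A}}(a_0))\cup\{b_0\}$; hence $b_1,\ldots,b_l\notin D_{\mathcal{A}}(a_0)$ (this is exactly what is used afterwards to make $\mathcal{F}\cup\{a_0\}$ a crown). Your step (1) instead takes the troughs to be ``the vertices of $T\cap D_{\mathcal{A}}(a_0)$ visited along the way,'' which is incompatible with the statement being proved, and the path chosen in the paper traverses only edges outside $T\cap D_{\mathcal{A}}(a_0)$, so its interior supplies no such vertices anyway. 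Your derivation of $l\ge 1$ from the two components of $T\cap D_{\mathcal{A}}(a_0)$ therefore does not apply; in the paper $l\ge 1$ comes from the fact that no single peak can be chosen above both $b_0$ and $b_*$.

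Second, and more importantly, the key idea of the paper's proof is missing and is replaced by an appeal to local connectedness that is unavailable here. The crux is the claim \eqref{pick.y}: for every edge $(x_{i-1},x_i)$ on the $T$-path one can choose a common upper bound $y_i\notin\mathcal{A}(b_0)\cap\mathcal{A}(b_*)$. This is proved by an exchange argument using minimality of the weight of $T$: if no such $y_i$ existed then, by \eqref{theta.set} and since $a_0$ lies above $b_0,b_*$ but not above both $x_{i-1},x_i$, one gets $\Theta(b_0,b_*)\subsetneq\Theta(x_{i-1},x_i)$, so $T+(b_0,b_*)-(x_{i-1},x_i)$ is a spanning tree of strictly smaller weight, a contradiction. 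Your proposal never isolates this step; instead, your choice of peaks as ``a minimal common upper bound'' of consecutive troughs (whose existence is unjustified, since non-adjacent troughs need not have any common upper bound) and your entire induced-ness argument in step (3) invoke local connectedness of $T\cap D_{\mathcal{A}}(\alpha)$ via Lemma~\ref{l.connected} and Corollary~\ref{l.conn.cor}. But $\mathcal{A}$ is assumed not synchronizable, so no spanning tree is locally connected and $T\cap D_{\mathcal{A}}(a_{k'})$ may well be disconnected; that is the very premise of the proposition, so this machinery cannot be used to rule out extra comparabilities. In the paper, induced-ness requires no such argument: after the witnesses $y_i$ are fixed, $a_k$ is chosen greedily as the witness of largest index comparable with $b_{k-1}$ and $b_k$ as the path vertex of largest index comparable with $a_k$, and the maximality of these indices, together with $y_i\notin\mathcal{A}(b_0)\cap\mathcal{A}(b_*)$, rules out all unwanted comparabilities and forces $l\ge 1$. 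Without the exchange step and the greedy selection, the proposed proof does not go through.
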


\begin{proof}
First we claim by contradiction that
\begin{equation}\label{pick.y}
  \mathcal{A}(x_{i-1})\cap\mathcal{A}(x_i)
  \setminus
  (\mathcal{A}(b_0)\cap\mathcal{A}(b_*))
  \neq\varnothing
\end{equation}
for each $i=1,\ldots,m$.
Suppose that \eqref{pick.y} is the empty set for some $i$.
Since $m\ge 2$, we have
$a_0\not\in\mathcal{A}(x_{i-1})$ or $a_0\not\in\mathcal{A}(x_i)$,
and therefore,
$x_{i-1}\not\in\Theta(b_0,b_*)$ or $x_i\not\in\Theta(b_0,b_*)$.
Thus, $\Theta(b_0,b_*)$ must be strictly smaller than $\Theta(x_{i-1},x_i)$,
and consequently,
the spanning tree $T+(b_0,b_*)-(x_{i-1},x_i)$ has a strictly smaller weight,
contradicting the minimum weight of $T$.
Thus, we can choose an element $y_i$ of \eqref{pick.y} for each $i=1,\ldots,m$

Starting from $b_0=x_0$ we can determine
$a_1=y_{i_1}$ of the largest index $i_1$ comparable with $b_0$.
Provided $a_1$,
we determine $b_1=x_{j_1}$ of the largest index $j_1$
comparable with $a_1$ and so forth
until the last element $a_{l+1} = y_{i_{l+1}}$ is comparable with $b_*=x_m$.
The choice of $y_i$'s guarantees $l\ge 1$,
and the completed sequence of \eqref{t.fence}
forms the induced fence of $\mathcal{A}$ as desired.
\end{proof}

In the setting of Lemma~\ref{fence.lem}
we form the collection $\Gamma_0$ of induced fence subposets of
$\mathcal{A}$ supported by $T_0$ from $b_0$ to some element
$b_*\in D_{\mathcal{A}}(a_0)$.
Each of the maximal elements $a_1,\ldots,a_{l+1}$ of a fence $\mathcal{F}\in\Gamma_0$
cannot be comparable with both $b_0$ and $b_*$ in $\mathcal{A}$,
and therefore, it is incomparable with $a_0$
since $b_1,\ldots,b_l\not\in D_{\mathcal{A}}(a_0)$.
Thus, the subposet of $\mathcal{A}$ induced on $\mathcal{F}\cup\{a_0\}$
becomes an $(2l+4)$-crown poset (cf~\cite{duffus1992}).

In order to construct a counter example for Proposition~\ref{main.claim}
we enumerate $\Gamma_0$ and express each fence
$\mathcal{F}^{(i)}\in\Gamma_0$
by
\begin{equation*}
  b_0<a_1^{(i)}>b_1^{(i)}<\ldots>b_{l_i}^{(i)}<a_{l_i+1}^{(i)}>b_*^{(i)} .
\end{equation*}
For each $\mathcal{F}^{(j)}\in\Gamma_0$
we count $\mathcal{F}^{(i)}$'s
satisfying $b_*^{(j)}< a_1^{(i)}$ in $\mathcal{A}$,
and place $\mathcal{F}^{(1)}$
so that $M = |\{i: b_*^{(1)}< a_1^{(i)}\}|$ attains the maximum of such numbers.
Furthermore, we place
$\mathcal{F}^{(2)},\ldots,\mathcal{F}^{(n)}$
in such a way that
\begin{equation*}
  C_1=\{a_1^{(1)},a_1^{(2)},\ldots,a_1^{(n)}\}
\end{equation*}
is identified with the collection
of minimal elements in
the induced subposet on
\begin{equation}\label{a1.set}
  \{a_1^{(1)}\}\cup\{a_1^{(i)}: b_*^{(1)}< a_1^{(i)}\}.
\end{equation}
Note that $a_1^{(1)}$ is always minimal in such subposet
because $a_1^{(1)}$ is not comparable with $b_*^{(1)}$.
Then we define
\begin{equation*}
  B_* = \{b_*^{(1)},\ldots,b_*^{(n)}\}
\end{equation*}
whose cardinality may be less than $n$.
We further introduce
\begin{align*}
  B_1 &= \bigcup_{i=1}^n\{b_1^{(i)},\ldots,b_{l_i}^{(i)}\};
  \\
  C_2 &= \bigcup_{i=1}^n\{a_2^{(i)},\ldots,a_{l_i+1}^{(i)}\}.
\end{align*}
We have $B_1\subseteq D_{\mathcal{A}}\setminus D_{\mathcal{A}}(a_0)$
and $B_*\subseteq D_{\mathcal{A}}(a_0)$,
and therefore, $\{b_0\}$, $B_1$, and $B_*$ are mutually disjoint.
Similarly $\{a_0\}$, $C_1$, and $C_2$ are disjoint since $b_0$ is incomparable
with each $\alpha\in C_2$;
in particular, if $a_1^{(i)}\in C_1$ and $\alpha\in C_2$ are comparable
then we must have $\alpha< a_1^{(i)}$.

\begin{figure}[h]\begin{center}
\begin{tikzpicture}[xscale=1.5,yscale=1.5]
\draw[thick]
  (0,0) -- (1,1)
  (0,0) -- (2,1)
  (0,0) -- (3,1)
  (1,0) -- (0,1)
  (1,0) -- (2,1)
  (1,0) -- (3,1)
  (2,0) -- (0,1)
  (2,0) -- (1,1)
  (2,0) -- (3,1)
  (3,0) -- (0,1)
  (3,0) -- (1,1)
  (3,0) -- (2,1);

\draw[fill=white] (0,0) circle [radius=0.05];
\node[below] at (0,0) {$b_1$};
\draw[fill=white] (1,0) circle [radius=0.05];
\node[below] at (1,0) {$b_*^{(1)}$};
\draw[fill=white] (2,0) circle [radius=0.05];
\node[below] at (2,0) {$b_*^{(2)}$};
\draw[fill=white] (3,0) circle [radius=0.05];
\node[below] at (3,0) {$b_0$};
\draw[fill=white] (0,1) circle [radius=0.05];
\node[above] at (0,1) {$a_0$};
\draw[fill=white] (1,1) circle [radius=0.05];
\node[above] at (1,1) {$a_1^{(1)}$};
\draw[fill=white] (2,1) circle [radius=0.05];
\node[above] at (2,1) {$a_1^{(2)}$};
\draw[fill=white] (3,1) circle [radius=0.05];
\node[above] at (3,1) {$a_2$};
\end{tikzpicture}
\caption{
  Standard example of $4$-dimensional poset (cf. \cite{trotter})
}\label{4d.poset}
\end{center}\end{figure}
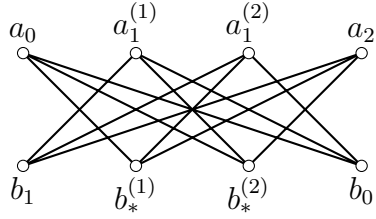

\begin{example}\label{4d.example}
Let $\mathcal{A}$ be a poset of Figure~\ref{4d.poset}.
Since $\Theta(\beta,\beta')=\{\beta,\beta'\}$ for each
$(\beta,\beta')\in E_{\mathcal{A}}$,
a spanning tree $T$ with edges
$(b_0,b_1)$, $(b_1,b_*^{(1)})$ and $(b_1,b_*^{(2)})$
attains the minimum weight of $6$.
$T\cap D_{\mathcal{A}}(a_0)$ is not connected,
and therefore, $\mathcal{A}$ is not synchronizable
by Corollary~\ref{l.conn.cor}.
We can choose $b_0\in D_{\mathcal{A}}(a_0)$
for which $T_0$ consists of one edge $(b_0,b_1)$.
Then we can enumerate two fences supported by $T_0$
from $b_0$ to $b_*^{(i)}\in D_{\mathcal{A}}(a_0)$ as follows:
\begin{center}
$\mathcal{F}^{(1)} =$
\begin{minipage}{30ex}
\begin{tikzpicture}[xscale=1.5,yscale=1.5]
\draw[thick]
  (0,0) -- (1,1)
  (0,0) -- (3,1)
  (1,0) -- (3,1)
  (3,0) -- (1,1);
\draw[dashed] (0,0) to [out=-40,in=220] (3,0);

\draw[fill=white] (0,0) circle [radius=0.05];
\node[below] at (0,0) {$b_1$};
\draw[fill=white] (1,0) circle [radius=0.05];
\node[below] at (1,0) {$b_*^{(1)}$};
\draw[fill=white] (3,0) circle [radius=0.05];
\node[below] at (3,0) {$b_0$};
\draw[fill=white] (1,1) circle [radius=0.05];
\node[above] at (1,1) {$a_1^{(1)}$};
\draw[fill=white] (3,1) circle [radius=0.05];
\node[above] at (3,1) {$a_2$};
\end{tikzpicture}
\end{minipage}
$\mathcal{F}^{(2)} =$
\begin{minipage}{30ex}
\begin{tikzpicture}[xscale=1.5,yscale=1.5]
\draw[thick]
  (0,0) -- (2,1)
  (0,0) -- (3,1)
  (2,0) -- (3,1)
  (3,0) -- (2,1);
\draw[dashed] (0,0) to [out=-40,in=220] (3,0);

\draw[fill=white] (0,0) circle [radius=0.05];
\node[below] at (0,0) {$b_1$};
\draw[fill=white] (2,0) circle [radius=0.05];
\node[below] at (2,0) {$b_*^{(2)}$};
\draw[fill=white] (3,0) circle [radius=0.05];
\node[below] at (3,0) {$b_0$};
\draw[fill=white] (2,1) circle [radius=0.05];
\node[above] at (2,1) {$a_1^{(2)}$};
\draw[fill=white] (3,1) circle [radius=0.05];
\node[above] at (3,1) {$a_2$};
\end{tikzpicture}
\end{minipage}
\end{center}
Here we find
$\{a_1^{(i)}:b_*^{(1)}<a_1^{(i)}\}=\{a_1^{(2)}\}$ maximal
and $C_1=\{a_1^{(1)},a_1^{(2)}\}$ identified
with the collection of minimal elements in \eqref{a1.set}.
We obtain
$B_*=\{b_*^{(1)},b_*^{(2)}\}$,
$B_1=\{b_1\}$ and $C_2=\{a_2\}$.
\end{example}

\begin{lemma}\label{b.star}
For each $b_*\in D_{\mathcal{A}}(a_0)\setminus\{b_0\}$
we have $|\mathcal{A}(b_*)\cap C_1|\le n-1$
or $\mathcal{A}(b_*)\cap C_2=\varnothing$.
\end{lemma}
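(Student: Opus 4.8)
The plan is to argue by contradiction, assuming $|\mathcal{A}(b_*)\cap C_1|=n$ and $\mathcal{A}(b_*)\cap C_2\neq\varnothing$. Since $C_1=\{a_1^{(1)},\dots,a_1^{(n)}\}$ has $n$ elements, the first hypothesis is $C_1\subseteq\mathcal{A}(b_*)$, i.e.\ $b_*<a_1^{(i)}$ for every $i\in\{1,\dots,n\}$ (strict because each $a_1^{(i)}$ dominates $b_0$, hence is not minimal). Fix $\alpha\in\mathcal{A}(b_*)\cap C_2$ and write $\alpha=a_k^{(i_0)}$ with $i_0\in\{1,\dots,n\}$ and $2\le k\le l_{i_0}+1$. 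The first step is to propagate ``$b_*$ lies below every element of $C_1$'' using the way $C_1$ was defined: $C_1$ is exactly the set of minimal elements of $A_1:=\{a_1^{(1)}\}\cup\{a_1^{(i)}:b_*^{(1)}<a_1^{(i)}\}$ (with $i$ running over all of $\Gamma_0$), so each element of $A_1$ dominates an element of $C_1$ and therefore $b_*<a$ for every $a\in A_1$. In particular $\{i:b_*^{(1)}<a_1^{(i)}\}\subseteq\{i:b_*<a_1^{(i)}\}$, so $|\{i:b_*<a_1^{(i)}\}|\ge M$. I also record that $b_0,b_*<a_1^{(i_0)}$ makes $\{b_0,b_*\}$ an edge of $G_{\mathcal{A}}$, while $b_0\not<a_k^{(i_0)}$, since in the induced fence $\mathcal{F}^{(i_0)}$ the maximal elements are pairwise incomparable and $b_0$ lies below only $a_1^{(i_0)}$.

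The crux is to produce a fence $\mathcal{F}^{\sharp}\in\Gamma_0$ that ends at $b_*$. I would split on whether $b_*$ lies in the same component of $T\cap D_{\mathcal{A}}(a_0)$ as $b_0$. If it does not, the unique $T$-path from $b_0$ to $b_*$ traverses edges outside $T\cap D_{\mathcal{A}}(a_0)$, and the construction in the proof of Lemma~\ref{fence.lem} applies verbatim, with this path in place of $(x_0,\dots,x_m)$ and $b_*$ in place of $x_m$, to yield such an $\mathcal{F}^{\sharp}$. For the remaining case I would instead route through $a_k^{(i_0)}$: starting from the $T_0$-path underlying $\mathcal{F}^{(i_0)}$ truncated at $b_{k-1}^{(i_0)}$ and closing with $a_k^{(i_0)}>b_*$, I would extract an induced fence supported by $T_0$ from $b_0$ to $b_*$, shortening it so that its final maximal element is the only used one dominating $b_*$; the minimum-weight property of $T$ (which rules out replacing a costlier edge of the $T$-path by $\{b_0,b_*\}$) should be what forces this to go through. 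Once we have $\mathcal{F}^{\sharp}\in\Gamma_0$ ending at $b_*$, the maximality of $M$ gives $|\{i:b_*<a_1^{(i)}\}|\le M$, hence equality, hence $\{i:b_*<a_1^{(i)}\}=\{i:b_*^{(1)}<a_1^{(i)}\}$.

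The contradiction should then come from $\alpha=a_k^{(i_0)}\in C_2$: another fence-construction along $T_0$ (again using $b_0\not<a_k^{(i_0)}$ and the path in $\mathcal{F}^{(i_0)}$) should exhibit a fence in $\Gamma_0$ whose first maximal element $a$ lies at or below $a_k^{(i_0)}$, so $b_*<a$, while $a$ is strictly below $a_1^{(i_0)}\in C_1$ by the observation that an element of $C_2$ comparable with an element of $C_1$ lies strictly below it; since $a_1^{(i_0)}$ is incomparable with $b_*^{(1)}$, one expects $b_*^{(1)}\not<a$, so $a$ is the first maximal element of a fence with $b_*<a$ but $b_*^{(1)}\not<a$, violating the displayed equality. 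I expect the genuine work to lie entirely in the two fence-constructions of the crux step and in pinning down the first maximal element coming from $C_2$: these require careful bookkeeping of the induced relations inside $\mathcal{F}^{(i_0)}$ and the crown $\mathcal{F}^{(i_0)}\cup\{a_0\}$, together with the minimality of $T$; the propagation step and the two cardinality inequalities are routine once the definitions of $C_1$ and of $M$ are unwound.
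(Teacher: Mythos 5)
Your overall skeleton (argue by contradiction, manufacture a fence in $\Gamma_0$ terminating at $b_*$, then play it off against the maximality of $M$) is the same as the paper's, but the step you yourself flag as the crux is exactly where the argument has genuine gaps. Your Case 1 shortcut does not work as stated: if $b_*$ lies in a different component of $T\cap D_{\mathcal{A}}(a_0)$ from $b_0$, the unique $T$-path from $b_0$ to $b_*$ need not traverse only edges outside $T\cap D_{\mathcal{A}}(a_0)$ (it may start inside $b_0$'s component or pass through further vertices of $D_{\mathcal{A}}(a_0)$), so the proof of Lemma~\ref{fence.lem} does not apply verbatim: the nonemptiness of \eqref{pick.y} there uses that every edge of the path has an endpoint outside $D_{\mathcal{A}}(a_0)$, and, worse, the resulting fence need not be supported by $T_0$ (its support path must lie in $T_0$, whose vertices other than $b_0$ avoid $D_{\mathcal{A}}(a_0)$), so its membership in $\Gamma_0$ --- which is what licenses the comparison with $M$ --- is not secured. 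Your Case 2 is in substance the paper's actual construction, which needs no case split at all: take the $T_0$-path from $b_0$ to $b_{k-1}^{(i)}$ (available because $\mathcal{F}^{(i)}$ is supported by $T_0$), choose $y_1,\ldots,y_{m-1}$ from \eqref{pick.y} relative to the pair $(b_0,b_*)$, and set $y_m=a_k^{(i)}\in\mathcal{A}(b_{k-1}^{(i)})\cap\mathcal{A}(b_*)\setminus(\mathcal{A}(b_0)\cap\mathcal{A}(b_*))$; the greedy extraction from Lemma~\ref{fence.lem} then yields an induced fence in $\Gamma_0$ from $b_0$ to $b_*$. You leave precisely this content at the level of ``should be what forces this to go through,'' so the construction is not actually carried out.

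The ending is also off. Once a fence $\mathcal{F}^{\sharp}\in\Gamma_0$ ending at $b_*$ exists, you are already done and your third paragraph is superfluous: by the hypothesis $|\mathcal{A}(b_*)\cap C_1|=n$ we have $b_*<a_1^{(1)}$ (since $a_1^{(1)}\in C_1$), while $a_1^{(1)}$ is incomparable with $b_*^{(1)}$, so the index $i=1$ lies in $\{i:b_*<a_1^{(i)}\}$ but not in $\{i:b_*^{(1)}<a_1^{(i)}\}$; together with the inclusion you proved this gives $|\{i:b_*<a_1^{(i)}\}|\ge M+1$, contradicting the maximality of $M$ --- equivalently, the set equality you derive is itself the contradiction, which you do not notice. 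The replacement argument you offer instead is not sound as written: from ``the first maximal element $a$ lies at or below $a_k^{(i_0)}$'' you conclude $b_*<a$, which is the wrong direction of transitivity (knowing $b_*<a_k^{(i_0)}$ gives nothing about elements below $a_k^{(i_0)}$), the comparison of $a$ with $C_1$ invokes a fact about elements of $C_2$ that $a$ need not belong to, and $b_*^{(1)}\not<a$ is only ``expected.'' So both the fence construction and the final contradiction need to be redone along the lines of the paper's single uniform argument.
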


\begin{proof}
We prove it by contradiction.
Suppose that there exists some $b_*\in D_{\mathcal{A}}(a_0)\setminus\{b_0\}$
satisfying $|\mathcal{A}(b_*)\cap C_1|=n$ and
$\mathcal{A}(b_*)\cap C_2\neq\varnothing$.
Then we can pick up $a_k^{(i)}\in \mathcal{A}(b_*)\cap C_2$ from some
fence $\mathcal{F}^{(i)}$, and find a path $(x_0,\ldots,x_{m-1})$ from
$x_0=b_0$ to $x_{m-1} = b_{k-1}^{(i)}$ in $T_0$.
Similarly to the proof of Lemma~\ref{fence.lem}
we can choose $y_i$ from the nonempty subset \eqref{pick.y}
for each $i=1,\ldots,m-1$,
and set
\begin{math}
  y_m = a_k^{(i)}\in
  \mathcal{A}(b_{k-1}^{(i)})\cap\mathcal{A}(b_*)
  \setminus(\mathcal{A}(b_0)\cap\mathcal{A}(b_*)),
\end{math}
which allows us to construct a fence $\mathcal{F}\in\Gamma_0$
supported by $T_0$ from $b_0$ to $b_*$.
Since $|\mathcal{A}(b_*)\cap C_1|=n$,
we obtain
$|\{i:b_*<a_1^{(i)}\}|\ge M+1$,
which contradicts the maximum $M$ of such numbers
held by the fence $\mathcal{F}^{(1)}$.
\end{proof}

Let $\mathcal{A}'$ be the subposet of $\mathcal{A}$ induced
on $\{a_0,b_0\}\cup B_1\cup B_*\cup C_1\cup C_2$,
and let $\mathcal{S}$ be the $(n+2)$-legged W$_{\star}$-poset of Proposition~\ref{main.claim}.
By $\delta_z$ we denote a point mass probability measure on $\{z\}$.
In Lemma~\ref{fail.me}
we introduce a system $(P_\alpha:\alpha\in\mathcal{A}')$ of probability
measures on $\mathcal{S}$ by
\begin{align}
  P_{a_0} &= \frac{1}{n+1}(\delta_{y_0}+n\delta_z);
  \nonumber \\
  P_{b_0} &= \frac{1}{n+1}\left(\delta_{y_0}+\sum_{j=1}^n\delta_{y_j}\right);
  \label{p.b0} \\  
  P_{a_1^{(j)}} &= \frac{1}{n+1}(\delta_{y_j}+n\delta_z)
  \text{ for $a_1^{(j)}\in C_1$, }
  \label{p.a1}
\end{align}
and
\begin{equation}\label{p.alpha}
  P_{\alpha} = \frac{1}{n+1}\left(\delta_{y_*}
  +(n-|\mathcal{A}(\alpha)\cap C_1|)\delta_z
  +\sum\{\delta_{y_j}:a_1^{(j)}\ge\alpha\}\right)
\end{equation}  
for $\alpha\in B_1\cup C_2$;
\begin{equation}\label{p.beta*}
  P_{\beta_*} = \frac{1}{n+1}\left(
  \delta_{y_*}+\delta_{y_0}+(n-1-|\mathcal{A}(\beta_*)\cap C_1|)\delta_z
  +\sum\{\delta_{y_j}:a_1^{(j)}\ge\beta_*\}\right)
\end{equation}
for $\beta_*\in B_*$.
It should be noted
in the construction of $P_{\beta_*}$ that
$|\mathcal{A}(b_*^{(i)})\cap C_1|\le n-1$ for $b_*^{(i)}\in B_*$
by Lemma~\ref{b.star}.

\begin{lemma}\label{fail.me}
The system $(P_\alpha:\alpha\in\mathcal{A}')$ is stochastically monotone,
but it is not realizably monotone.
\end{lemma}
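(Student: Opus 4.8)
The plan is to verify the two halves of the statement separately, since stochastic monotonicity is a routine check over the comparable pairs in $\mathcal{A}'$, while the failure of realizable monotonicity is the substantive part and will be extracted from a counting argument on point masses landing on the legs $y_1,\ldots,y_n$ of $\mathcal{S}$.

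First I would check stochastic monotonicity. Every comparable pair in $\mathcal{A}'$ is of the form $b_0<a_1^{(j)}$, $b_0<\alpha$ with $\alpha\in C_2$ (through some maximal element of a fence), $\beta_*<\alpha$ with $\beta_*\in B_*$ and $\alpha\ge\beta_*$ in $C_1\cup C_2$, or $a_0$ sitting above nothing in $\mathcal{A}'$ except possibly as an upper bound of $b_0$ through the crown structure. For each such pair $\gamma\le\gamma'$ I would exhibit the up-sets of $\mathcal{S}$ and compare $P_\gamma$ and $P_{\gamma'}$ coordinate by coordinate; the $\mathrm{W}_\star$-poset $\mathcal{S}$ has $z$ as its top element and $y_0,y_1,\ldots,y_n,y_*$ as its minimal legs, so an up-set is either all of $\mathcal{S}$, the singleton $\{z\}$, or $\{z\}$ together with any subset of legs. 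The only constraints are therefore $P_\gamma(\{z\})\le P_{\gamma'}(\{z\})$ and, for each leg $y$, $P_\gamma(\{y\})\ge P_{\gamma'}(\{y\})$ when $y$ is to be ``absorbed'' into $z$ as we move up. Inspecting \eqref{p.b0}--\eqref{p.beta*}, moving up replaces some $\delta_{y_j}$ mass by $\delta_z$ mass (and replaces $\delta_{y_0}$ by $\delta_z$ when passing from $b_0$ to $a_1^{(j)}$ or $a_0$), with the coefficient $n-|\mathcal{A}(\alpha)\cap C_1|$ shrinking monotonically as $\alpha$ increases; this is exactly what is needed, and the key bookkeeping point is that $\sum\{\delta_{y_j}:a_1^{(j)}\ge\alpha\}$ grows as $\alpha$ grows while the $\delta_z$ coefficient compensates, keeping the total mass at $1$.

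Next, the failure of realizable monotonicity. Suppose for contradiction a realization $(X_\alpha:\alpha\in\mathcal{A}')$ exists on some common probability space. The idea is that $X_{b_0}$ distributes unit mass $1/(n+1)$ to each of $y_0,y_1,\ldots,y_n$, and each leg $y_j$ ($j\ge1$) must, under $X_{a_1^{(j)}}\ge X_{b_0}$, be pushed up only to $z$ or stay at $y_j$; more precisely, on the event $\{X_{b_0}=y_j\}$ we must have $X_{a_1^{(j)}}\in\{y_j,z\}$, and by \eqref{p.a1} the mass $P_{a_1^{(j)}}(\{y_j\})=1/(n+1)$ forces $X_{a_1^{(j)}}=y_j$ a.s.\ on $\{X_{b_0}=y_j\}$ after a small computation matching the marginals. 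Then chase these forced equalities up the fences: along $\mathcal{F}^{(j)}$, the event $\{X_{b_0}=y_j\}$ propagates through $a_1^{(j)}>b_1^{(j)}<\cdots$ forcing $X_{a_k^{(j)}}$ and $X_{b_k^{(j)}}$ to sit on the leg $y_j$ wherever $a_1^{(j)}\ge$ the index, consistent with the $\delta_{y_j}$ term in \eqref{p.alpha}. Finally, on the event $\{X_{b_0}=y_0\}$, which has mass $1/(n+1)$, the comparability $b_0<a_0$ (if present in $\mathcal{A}'$ via the crown) forces $X_{a_0}\in\{y_0,z\}$, and one shows the $\delta_{y_0}$ term in each $P_{\beta_*}$ in \eqref{p.beta*} cannot be serviced: $\beta_*\in B_*$ lies above some $\alpha\in C_2$ (by Lemma~\ref{b.star}, for the $\beta_*$ with $\mathcal{A}(\beta_*)\cap C_2\ne\varnothing$ we have $|\mathcal{A}(\beta_*)\cap C_1|\le n-1$, leaving genuine $\delta_z$ slack), and the leg $y_0$ assigned to $\beta_*$ in $X_{\beta_*}$ would have to come from below via $X_{\gamma}\le X_{\beta_*}$ for every $\gamma<\beta_*$, but no such $\gamma$ in $C_1\cup C_2$ carries any $\delta_{y_0}$ mass. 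This contradiction establishes the claim.

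The main obstacle I expect is the second half: carefully setting up the ``forcing'' argument so that the point masses on the $n$ legs are genuinely pinned down by the marginal constraints, rather than merely bounded. The delicate point is that $\{X_{b_0}=y_j\}$ need not propagate cleanly unless one simultaneously tracks, for the fixed fence $\mathcal{F}^{(j)}$ through which $a_1^{(j)}$ sits above its successors, that the coefficient $n-|\mathcal{A}(\alpha)\cap C_1|$ in \eqref{p.alpha} leaves exactly enough $\delta_z$ room and no more, and that the maximality of $M$ (built into the placement of $\mathcal{F}^{(1)}$) prevents an element of $B_*$ from absorbing all of $C_1$ while still lying above $C_2$. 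This is precisely where Lemma~\ref{b.star} is used, and the cleanest route is probably to sum the marginal identities $P_\alpha(\{z\})$ over $\alpha\in C_1\cup C_2\cup B_*$ against the realization constraints to derive a contradiction of cardinalities, avoiding any event-by-event chase; I would look for such a counting identity first, and fall back on the explicit coupling argument only if the aggregate count does not close.
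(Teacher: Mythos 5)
Your first half is fine and is essentially the paper's check: for the $(n+2)$-legged W$_\star$-poset every up-set is $\{z\}$ together with a set of legs, so stochastic monotonicity reduces to leg-wise comparison of the point masses, verified pair by pair over the comparabilities of $\mathcal{A}'$. Your forcing step for realizability is also the right start and matches the paper: since $y_j$ is minimal in $\mathcal{S}$, $\{X_{a_1^{(j)}}=y_j\}\subseteq\{X_{b_0}=y_j\}$, and both events carry probability $1/(n+1)$, so they coincide a.s.; comparabilities then push these identifications onto neighbouring coordinates.

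The gap is in how you try to reach a contradiction. The claim that the $\delta_{y_0}$ term of $P_{\beta_*}$ in \eqref{p.beta*} ``cannot be serviced'' is false: on $E_0=\{X_{b_0}=y_0\}$ probability matching gives $X_{a_0}=y_0$, hence $X_{\beta_*}\le X_{a_0}=y_0$ forces $X_{\beta_*}=y_0$, which is exactly how that mass is realized; and the phrase ``would have to come from below via $X_\gamma\le X_{\beta_*}$'' is vacuous, since $\beta_*$ is a minimal element of $\mathcal{A}$ and the elements of $C_1\cup C_2$ sit above it, not below. The paper's actual contradiction is a disjoint-event count that your sketch never assembles: besides $E_i=\{X_{b_0}=y_i\}$, $i=0,\ldots,n$, one introduces $E_*=\{X_{b_*^{(1)}}=y_*\}$ and propagates it down the fence $\mathcal{F}^{(1)}$ (through $a_{l_1+1}^{(1)},b_{l_1}^{(1)},\ldots,b_1^{(1)}$, matching the $1/(n+1)$ masses at each step) to get $X_{b_1^{(1)}}=y_*$ on $E_*$, while $E_1$ forces $X_{b_1^{(1)}}=y_1$, $E_0$ forces $X_{b_*^{(1)}}=y_0$ via $a_0$, and $E_j$ for $j\ge 2$ forces $X_{b_*^{(1)}}=y_j$ via the relations $b_*^{(1)}<a_1^{(j)}$ built into the choice of $C_1$ (this is where the maximality of $M$ enters, not Lemma~\ref{b.star}, whose role is only to make \eqref{p.beta*} well defined). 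Hence $E_*,E_0,\ldots,E_n$ are $n+2$ pairwise disjoint events of probability $1/(n+1)$ each, giving total mass $(n+2)/(n+1)>1$. Your fallback ``aggregate counting identity'' is only a plan; without the extra event $E_*$, the fence propagation of $y_*$, and the relations $b_*^{(1)}<a_1^{(j)}$, no counting contradiction materializes, so the essential idea of the proof is missing.
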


\begin{proof}
First we show the stochastic monotonicity.
Within \eqref{p.a1}
$C_1$ is mutually incomparable, and so are $P_{a_1^{(j)}}$'s.
Within \eqref{p.alpha} or \eqref{p.beta*}
we have $P_{\alpha}\preceq P_{\beta}$ if $\alpha<\beta$ for any pair
$\{\alpha,\beta\}$ belonging to $B_1\cup C_2$ or $B_*$.
Let $a_1^{(j)}\in C_1$, $\alpha\in B_1\cup C_2$ and $\beta_*\in B_*$ be arbitrarily
fixed.
We can find $a_0$ comparable only with $b_0$ and $\beta_*$,
and obtain $P_{b_0}, P_{\beta_*} \preceq P_{a_0}$.
We also obtain $P_{b_0}\preceq P_{a_1^{(j)}}$.
For all the other comparable pairs we can verify
$P_{\alpha}\preceq P_{a_1^{(j)}}$ if $\alpha<a_1^{(j)}$;
$P_{\beta_*}\preceq P_{a_1^{(j)}}$ if $\beta_*<a_1^{(j)}$;
$P_{\beta_*}\preceq P_{\alpha}$ if $\beta_*\le\alpha$.
Thus, we have established the desired monotonicity.

We now prove by contradiction that the system is not realizably monotone.
Suppose that there is a system $(X_\alpha:\alpha\in\mathcal{A}')$
of random variables such that
the joint probability distribution $\mathbb{P}$
realizes the monotonicity and
each $X_\alpha$ is marginally distributed as $P_\alpha$.
Then we can observe that
\begin{enumerate}
\renewcommand{\labelenumi}{(\roman{enumi})}
\item  
$E_0=\{X_{b_0}=y_0\}$ implies $X_{a_0}=y_0$ and $X_{b_*^{(i)}}=y_0$ for each $i=1,\ldots,n$;
\item  
$E_1 = \{X_{b_0}=y_1\}$ implies $X_{a_1^{(1)}}=y_1$ and $X_{b_1^{(1)}}=y_1$;
\item  
$E_*=\{X_{b_*^{(1)}}=y_*\}$ implies $X_{a_k^{(1)}}=y_*$, $k=2,\ldots,l_1+1$,
and $X_{b_1^{(1)}}=y_*$;
\item  
for each $j=2,\ldots,n$,
the event $E_j=\{X_{b_0}=y_j\}$ implies
$X_{a_1^{(j)}}=y_j$ and $X_{b_*^{(1)}}=y_j$ since
$a_1^{(j)}>b_*^{(1)}$.
\end{enumerate}
Therefore,
$E_0=\{X_{b_0}=y_0\}$, $E_1=\{X_{b_0}=y_1\}$ and $E_j=\{X_{b_0}=y_j\}$,
$j=2,\ldots,n$, are mutually disjoint;
$E_1=\{X_{b_1^{(1)}}=y_1\}$, $E_*=\{X_{b_1^{(1)}}=y_*\}$ and
$E_j=\{X_{b_1^{(1)}}=y_j\}$,
$j=2,\ldots,n$, are mutually disjoint;
$E_0=\{X_{b_*^{(1)}}=y_0\}$ and $E_*=\{X_{b_*^{(1)}}=y_*\}$
are disjoint.
Hence, we find $E_*$ and $E_i$, $i=0,\ldots,n$, mutually disjoint.
Consequently we obtain
\begin{equation*}
  \mathbb{P}\left(E_*\cup\bigcup_{i=0}^n E_i\right)
  = \mathbb{P}(E_*) + \sum_{i=0}^n \mathbb{P}(E_i)
  = \frac{n+2}{n+1}
\end{equation*}
which contradicts that $\mathbb{P}$ is a probability measure.
\end{proof}

\begin{example}
Continue Example~\ref{4d.example}
in which we have $\mathcal{A}' = \mathcal{A}$.
Let $\mathcal{S}$ be a $4$-legged W$_{\star}$-poset of Figure~\ref{wposet}
with $n=2$.
Then we can construct the stochastically monotone system of Lemma~\ref{fail.me}
by
\begin{align*}
  &
  P_{a_0} = \frac{1}{3}\delta_{y_0} + \frac{2}{3}\delta_{z};
  & &
  P_{b_0} = \frac{1}{3}(\delta_{y_0} + \delta_{y_1} + \delta_{y_2});
  \\ &
  P_{a_1^{(1)}} = \frac{1}{3}\delta_{y_1} + \frac{2}{3}\delta_{z};
  & &
  P_{a_1^{(2)}} = \frac{1}{3}\delta_{y_2} + \frac{2}{3}\delta_{z};
  \\ &
  P_{b_1} = \frac{1}{3}(\delta_{y_*} + \delta_{y_1} + \delta_{y_2});
  & &
  P_{a_2} = \frac{1}{3}\delta_{y_*} + \frac{2}{3}\delta_{z};
  \\ &
  P_{b_*^{(1)}} = \frac{1}{3}(\delta_{y_*} + \delta_{y_0} + \delta_{y_2});
  & &
  P_{b_*^{(2)}} = \frac{1}{3}(\delta_{y_*} + \delta_{y_0} + \delta_{y_1}).
\end{align*}

Suppose that there exists a system $(X_\alpha:\alpha\in\mathcal{A})$
which realizes the monotonicity.
Then we can observe that
\begin{align*}
  E_0 &= \{X_{b_0}=y_0\} = \{X_{a_0}=y_0\} = \{X_{b_*^{(1)}}=y_0\},
  \\
  E_1 &= \{X_{b_0}=y_1\} = \{X_{a_1^{(1)}}=y_1\} = \{X_{b_1}=y_1\},
  \\
  E_2 &= \{X_{b_0}=y_2\} = \{X_{a_1^{(2)}}=y_2\} = \{X_{b_*^{(1)}}=y_2\},
  \\
  E_* &= \{X_{b_*^{(1)}}=y_*\} = \{X_{a_2}=y_*\} = \{X_{b_1}=y_*\}
\end{align*}
are mutually disjoint,
and therefore, we obtain
$\mathbb{P}(E_0\cup E_1\cup E_2\cup E_*) = 4/3 > 1$,
which is a contradiction.
Hence, the system cannot be realizably monotone.
\end{example}

For each $\alpha\in\mathcal{A}$ we can define a principal down-set of
$\mathcal{A}$ by
\begin{equation*}
  \mathcal{A}^*(\alpha) = \{\beta\in\mathcal{A}:
  \mbox{$\beta\le\alpha$ in $\mathcal{A}$}\}
\end{equation*}
Then we can introduce a partition of $\mathcal{A}$ by
\begin{align*}
  A_0 &= \mathcal{A}^*(a_0)\cap\bigcap_{j=1}^n\mathcal{A}^*(a_1^{(j)});
  \\
  A_1 &= \mathcal{A}\setminus\mathcal{A}^*(a_0);
  \\
  A_* &= \mathcal{A}^*(a_0)\setminus\bigcap_{j=1}^n\mathcal{A}^*(a_1^{(j)}),
\end{align*}
and find $b_0\in A_0$, $C_1\cup C_2\cup B_1\subseteq A_1$, and
$\{a_0\}\cup B_*\subseteq A_*$.
By setting the down-set
\begin{equation*}
  A_2 = \bigcup_{\alpha_2\in C_2}\mathcal{A}^*(\alpha_2)
\end{equation*}
we can further refine the partition of $\mathcal{A}$ by
\begin{equation}\label{part}
  A_0,\,
  A_1\cap A_2,\,
  A_1\setminus A_2,\,
  A_*\cap A_2
  \text{ and }
  A_*\setminus A_2,
\end{equation}
and find $b_0\in A_0$, $B_1\cup C_2\subseteq A_1\cap A_2$,
$C_1\subseteq A_1\setminus A_2$,
$B_*\subseteq A_*\cap A_2$ and
$a_0\in A_*\setminus A_2$.

\begin{lemma}\label{part.a}
The down-sets $A_0$ and $A_2$ are disjoint.
\end{lemma}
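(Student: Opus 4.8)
The plan is to argue by contradiction, exploiting that $A_0$ and $A_2$ are both down-sets of $\mathcal{A}$ --- $A_0$ being an intersection of principal down-sets and $A_2$ a union of them. If $A_0\cap A_2\neq\varnothing$, then, $\mathcal{A}$ being finite, this intersection contains a minimal element of $\mathcal{A}$; so I would fix some $\gamma\in A_0\cap A_2$ with $\gamma\in D_{\mathcal{A}}$.

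From $\gamma\in A_0=\mathcal{A}^*(a_0)\cap\bigcap_{j=1}^n\mathcal{A}^*(a_1^{(j)})$ I would read off $\gamma\le a_0$, hence $\gamma\in D_{\mathcal{A}}(a_0)$, and $\gamma\le a_1^{(j)}$ for every $j=1,\ldots,n$, i.e.\ $C_1\subseteq\mathcal{A}(\gamma)$; since $|C_1|=n$ this gives $|\mathcal{A}(\gamma)\cap C_1|=n$. From $\gamma\in A_2=\bigcup_{\alpha_2\in C_2}\mathcal{A}^*(\alpha_2)$ I would pick $\alpha_2\in C_2$ with $\gamma\le\alpha_2$, so that $\mathcal{A}(\gamma)\cap C_2\neq\varnothing$.

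I would then split on whether $\gamma=b_0$. If $\gamma=b_0$, then $b_0\le\alpha_2\in C_2$ contradicts the already-noted incomparability of $b_0$ with every element of $C_2$. If $\gamma\neq b_0$, then $\gamma\in D_{\mathcal{A}}(a_0)\setminus\{b_0\}$, and Lemma~\ref{b.star} applied with $b_*=\gamma$ forces $|\mathcal{A}(\gamma)\cap C_1|\le n-1$ or $\mathcal{A}(\gamma)\cap C_2=\varnothing$; the first alternative contradicts $|\mathcal{A}(\gamma)\cap C_1|=n$ and the second contradicts $\mathcal{A}(\gamma)\cap C_2\neq\varnothing$. In either case the assumption $A_0\cap A_2\neq\varnothing$ collapses, so $A_0$ and $A_2$ are disjoint.

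I do not expect a genuine obstacle here. The only point needing a little care is the reduction to a minimal common element, which uses only that an intersection of principal down-sets and a union of principal down-sets are again down-sets together with finiteness of $\mathcal{A}$; everything else is matching the definitions of $A_0$, $A_2$, $C_1$, and $C_2$ against Lemma~\ref{b.star}.
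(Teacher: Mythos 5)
Your proof is correct and follows essentially the same route as the paper: pick a minimal element of $A_0\cap A_2$, note it lies in $D_{\mathcal{A}}(a_0)$ and below some $\alpha_2\in C_2$, rule out $b_0$ via the incomparability of $b_0$ with $C_2$, and contradict Lemma~\ref{b.star} using $|\mathcal{A}(\gamma)\cap C_1|=n$ and $\mathcal{A}(\gamma)\cap C_2\neq\varnothing$. The only cosmetic difference is that you phrase the exclusion of $b_0$ as an explicit case split, while the paper folds it into one line.
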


\begin{proof}
Suppose that $A_0\cap A_2\neq\varnothing$.
Then we can find a minimal element $b_*\in A_0\cap A_2$
so that $b_*\in A_0\cap D_{\mathcal{A}}$ and
$b_*\le\alpha_2$ for some $\alpha_2\in C_2$.
Since $\alpha_2\in C_2$ and $b_0$ are incomparable,
we also find $b_*\in D_{\mathcal{A}}(a_0)\setminus\{b_0\}$.
Since $b_*\in A_0$, we have $|\mathcal{A}(b_*)\cap C_1| = n$,
but it contradicts that $|\mathcal{A}(b_*)\cap C_1|\le n-1$ by Lemma~\ref{b.star}.
\end{proof}

We can extend $(P_\alpha:\alpha\in\mathcal{A}')$ and construct
a system $(P_\alpha:\alpha\in\mathcal{A})$ of probability measures on
$\mathcal{S}$ by setting
$P_{\beta_0} = P_{b_0}$ of \eqref{p.b0} for $\beta_0\in A_0$;
(\ref{p.alpha}) for $\alpha\in A_1\cap A_2$;
\begin{equation*}
  P_{\alpha_1} = \frac{1}{n+1}\left(
  (n+1-|\mathcal{A}(\alpha_1)\cap C_1|)\delta_z
  +\sum\{\delta_{y_j}:a_1^{(j)}\ge\alpha_1\}\right)
\end{equation*}
for $\alpha_1\in A_1\setminus A_2$;
(\ref{p.beta*}) for $\beta_*\in A_*\cap A_2$;
\begin{equation*}
  P_{\alpha_0} = \frac{1}{n+1}\left(\delta_{y_0}
  +(n-|\mathcal{A}(\alpha_0)\cap C_1|)\delta_z
  +\sum\{\delta_{y_j}:a_1^{(j)}\ge\alpha_0\}\right)
\end{equation*}
for $\alpha_0\in A_*\setminus A_2$.

A stochastically monotone extension of
$(P_\alpha:\alpha\in\mathcal{A}')$ cannot be realizably monotone
by Lemma~\ref{fail.me}.
Thus, the following lemma completes the proof of Proposition~\ref{main.claim}.

\begin{lemma}\label{sm.extension}
The system $(P_\alpha:\alpha\in\mathcal{A})$ is stochastically monotone.
\end{lemma}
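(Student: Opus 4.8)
The plan is to verify $P_\alpha\preceq P_\beta$ for every covering pair $\alpha<\beta$ in $\mathcal{A}$, arguing block by block along the refined partition \eqref{part}. Since $(P_\alpha:\alpha\in\mathcal{A}')$ is already known to be stochastically monotone by Lemma~\ref{fail.me}, the new work is confined to comparable pairs that involve at least one element outside $\mathcal{A}'$, and to checking that the five blocks $A_0$, $A_1\cap A_2$, $A_1\setminus A_2$, $A_*\cap A_2$, $A_*\setminus A_2$ interact correctly. The key structural facts I would lean on are: $A_0$, $A_1=\mathcal{A}\setminus\mathcal{A}^*(a_0)$ and $A_*$ partition $\mathcal{A}$; $A_2=\bigcup_{\alpha_2\in C_2}\mathcal{A}^*(\alpha_2)$ is a down-set; and $A_0\cap A_2=\varnothing$ by Lemma~\ref{part.a}. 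Because $A_1$ and $A_2$ are down-sets while $A_0$ is not contained in either $\mathcal{A}^*(a_0)$'s complement structure trivially, the comparable pairs that can occur are limited: from a smaller block one can only move ``upward'' within $A_0$, from $A_0$ into $A_*$ (never into $A_1$, since $A_0\subseteq\mathcal{A}^*(a_0)$ but $C_1$-dominated while $A_1$ avoids $\mathcal{A}^*(a_0)$), within $A_1$ respecting $A_2\subseteq$ down-set, and within $A_*$ likewise.

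The calculations are all of the same flavor: each $P_\alpha$ is a normalized sum $\frac{1}{n+1}\big(c_\alpha\,\delta_z + \sum\{\delta_{y_j}: a_1^{(j)}\ge\alpha\} + (\text{possibly }\delta_{y_0}) + (\text{possibly }\delta_{y_*})\big)$, and the up-sets of the $(n+2)$-legged $\mathrm{W}_\star$-poset $\mathcal{S}$ of Figure~\ref{wposet} are exactly $\{z\}$ together with any set $\{z\}\cup L$ where $L\subseteq\{y_0,y_1,\ldots,y_n,y_*\}$, plus the singletons among the legs are \emph{not} up-sets (each $y_j$ lies below $z$ only — wait, $z$ is the top, so the up-set generated by $y_j$ is $\{y_j,z\}$). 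Thus checking $P_\alpha\preceq P_\beta$ reduces to: (a) $P_\alpha(\{z\})\le P_\beta(\{z\})$, i.e.\ $c_\alpha\le c_\beta$; and (b) for each leg $y$, $P_\alpha(\{y,z\})\le P_\beta(\{y,z\})$, i.e.\ $P_\alpha(\{y\})+c_\alpha\le P_\beta(\{y\})+c_\beta$. Since $\alpha<\beta$ forces $\{j:a_1^{(j)}\ge\beta\}\subseteq\{j:a_1^{(j)}\ge\alpha\}$ and hence $|\mathcal{A}(\beta)\cap C_1|\le|\mathcal{A}(\alpha)\cap C_1|$, the constant $c$ (which in each formula is $n$ or $n+1$ minus $|\mathcal{A}(\cdot)\cap C_1|$, possibly shifted by $1$) moves in the right direction; one then checks leg-by-leg that the mass transferred off a leg $y_j$ as $\alpha$ increases is reabsorbed into $\delta_z$, so the cumulative quantity $P(\{y_j,z\})$ is nondecreasing. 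The pairs that cross block boundaries — $\beta_0\in A_0$ below some $\alpha\in A_*$, or a $C_2$-element below a $C_1$-element — need the extra inputs that $|\mathcal{A}(\alpha_0)\cap C_1|$ and $|\mathcal{A}(\alpha_1)\cap C_1|$ are controlled; here Lemma~\ref{b.star} (giving $|\mathcal{A}(b_*)\cap C_1|\le n-1$ on $B_*\subseteq A_*\cap A_2$) and Lemma~\ref{part.a} (ensuring no element is simultaneously ``$A_0$-type'' and ``$A_2$-type,'' so the formulas never clash on overlaps) are exactly what is needed to keep the $\delta_z$-coefficients nonnegative and ordered.

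I would organize the verification into the cases: (1) both endpoints in $\mathcal{A}'$ — already done; (2) both in $A_0$ — then $P_{\beta_0}=P_{\alpha_0}=P_{b_0}$ and there is nothing to prove; (3) both in $A_1$, splitting according to whether each lies in $A_2$ — the two $A_1$-formulas differ only in the $\delta_z$-coefficient ($n+1-|\cdot|$ versus $n-|\cdot|$, the latter padded by $\delta_{y_*}$), and since $A_2$ is a down-set the smaller element is the one in $A_2$, so $z$-mass genuinely increases; (4) both in $A_*$, similarly split by $A_2$, using that $A_2$ is a down-set and Lemma~\ref{b.star} to keep coefficients in $[0,1]$; (5) $A_0\to A_*$ crossings: a $\beta_0\in A_0$ below an $\alpha\in A_*\setminus A_2$ (it cannot be below anything in $A_*\cap A_2$ by Lemma~\ref{part.a}), where $P_{\beta_0}=P_{b_0}=\frac1{n+1}(\delta_{y_0}+\sum_{j=1}^n\delta_{y_j})$ and $P_{\alpha_0}=\frac1{n+1}(\delta_{y_0}+(n-|\mathcal{A}(\alpha_0)\cap C_1|)\delta_z+\sum\{\delta_{y_j}:a_1^{(j)}\ge\alpha_0\})$, and one checks the $y_0$-leg, each $y_j$-leg, and $\{z\}$ directly; (6) crossings within $A_1$ or from $A_1$ down into things — but $A_1$ is a down-set disjoint from $A_0\cup A_*$ so there are no new cross-block comparable pairs here beyond (3). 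The main obstacle I anticipate is bookkeeping in case (5) and the $A_2$-boundary subcases of (3) and (4): one must confirm that every $\delta_z$-coefficient appearing in the new formulas is a nonnegative integer (this is where $|\mathcal{A}(\alpha)\cap C_1|\le n$, resp.\ $\le n-1$ on $B_*$, is essential) and that the ``$\delta_{y_0}$ appears iff the element sits in $A_*$'' and ``$\delta_{y_*}$ appears iff the element sits in $A_2$'' dictionary is consistent across the partition — Lemma~\ref{part.a} is precisely the statement that prevents these two markers from being simultaneously forced in a contradictory way, so invoking it at the right moment is the crux.
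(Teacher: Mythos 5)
Your overall strategy (block-by-block verification over the refined partition \eqref{part}, using Lemma~\ref{part.a} and Lemma~\ref{b.star}) is the same as the paper's, but two steps in your plan are genuinely flawed. First, your reduction of stochastic ordering on the $(n+2)$-legged $\mathrm{W}_\star$-poset to the conditions (a) $P_\alpha(\{z\})\le P_\beta(\{z\})$ and (b) $P_\alpha(\{y,z\})\le P_\beta(\{y,z\})$ for each leg $y$ is not valid: the up-sets are all sets $\{z\}\cup L$ with $L$ an arbitrary set of legs, and the inequalities for singleton $L$ do not imply those for larger $L$. For instance $P=\tfrac12(\delta_{y_1}+\delta_{y_2})$ and $P'=\tfrac12(\delta_{z}+\delta_{y_3})$ satisfy your (a) and (b) for every leg, yet $P(\{z,y_1,y_2\})=1>\tfrac12=P'(\{z,y_1,y_2\})$, so $P\not\preceq P'$. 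The correct (and in fact simpler) criterion here is leg-wise: every proper down-set of $\mathcal{S}$ is a union of singletons $\{y\}$, so $P_\alpha\preceq P_\beta$ if and only if $P_\beta(\{y\})\le P_\alpha(\{y\})$ for every leg $y$. That is the condition your bookkeeping must establish (and it does hold for the formulas, since $\{j:a_1^{(j)}\ge\beta\}\subseteq\{j:a_1^{(j)}\ge\alpha\}$ when $\alpha<\beta$ and the $\delta_{y_0}$, $\delta_{y_*}$ terms can only be dropped going upward), but as stated your check (a)+(b) is strictly weaker and would not prove the lemma.

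Second, your enumeration of cross-block comparable pairs rests on false structural claims: $A_1=\mathcal{A}\setminus\mathcal{A}^*(a_0)$ is an \emph{up}-set, not a down-set, and comparable pairs from $A_0$ and from $A_*$ into $A_1$ do occur --- e.g.\ $b_0<a_1^{(j)}$ with $a_1^{(j)}\in C_1\subseteq A_1\setminus A_2$, and $\beta_*\in A_*\cap A_2$ lies below elements of $C_1$ and of $C_2\subseteq A_1\cap A_2$. Your case (1) (``both endpoints in $\mathcal{A}'$'') does not absorb these, because the blocks contain many elements outside $\mathcal{A}'$ (a general $\beta_0\in A_0$, $\alpha_1\in A_1\setminus A_2$, $\beta_*\in A_*\cap A_2$, etc.), so pairs such as $\beta_0<\alpha_1$ or $\beta_*<\alpha$ with at least one endpoint outside $\mathcal{A}'$ are simply missing from your list. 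The paper's proof spends its cases (i), (ii) and (iv) exactly on these crossings (using that $A_1\setminus A_2$ is an up-set, that $A_0$ and $A_2$ are down-sets, and Lemma~\ref{part.a} to rule out the remaining crossings as incomparable); without an analogous treatment your argument is incomplete.
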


\begin{proof}
Clearly we have $P_{\alpha}\preceq P_{\beta}$ if $\alpha<\beta$ for a pair
$\{\alpha,\beta\}$ belonging to one of the refined partition (\ref{part}).
Let
\begin{equation*}
  \beta_0\in A_0,\,
  \alpha\in A_1\cap A_2,\,
  \alpha_1\in A_1\setminus A_2,\,
  \beta_*\in A_*\cap A_2,
  \text{ and }  
  \alpha_0\in A_*\setminus A_2
\end{equation*}
be arbitrarily fixed.
\begin{enumerate}
\renewcommand{\labelenumi}{(\roman{enumi})}
\item
Since $A_1\setminus A_2$ is an up-set of $\mathcal{A}$,
a comparable pair with $\alpha_1$ satisfies
$\alpha_0,\beta_0,\alpha,\beta_*<\alpha_1$,
for which we see
$P_{\alpha_0},P_{\beta_0},P_{\alpha},P_{\beta_*}\preceq P_{\alpha_1}$.
\item
Consider a comparable pair with $\alpha_0$
from $\beta_0$, $\alpha$ or $\beta_*$.
We observe that
$\alpha<\alpha_0$ cannot happen; otherwise,
$\alpha_0\in A_*$ implies $\alpha<a_0$ and
contradicts $\alpha\in A_1$.
Similarly $\alpha_0<\alpha$ cannot happen because
$\alpha\in A_2$ implies
$\alpha_0<\alpha_2$ for some $\alpha_2\in C_2$,
contradicting $\alpha_0\not\in A_2$.
Hence, the pair $\{\alpha_0,\alpha\}$ is incomparable.
Since $A_0$ is a down-set of $\mathcal{A}$, a comparable pair $\{\alpha_0,\beta_0\}$
satisfies $\beta_0<\alpha_0$.
A comparable pair $\{\alpha_0,\beta_*\}$ must have $\beta_*<\alpha_0$;
otherwise, $\beta_*\in A_2$ implies $\alpha_0<\alpha_2$ for some $\alpha_2\in C_2$,
contradicting $\alpha_0\not\in A_2$.
For these comparable pairs we have
$P_{\beta_0},P_{\beta_*}\preceq P_{\alpha_0}$ if $\beta_0,\beta_*<\alpha_0$.
\item
By Lemma~\ref{part.a} we find $\beta_0$ incomparable with $\alpha$ and
$\beta_*$ since $\beta_0\in A_0$ and $\alpha,\beta_*\in A_2$.
Thus, there is no pair with $\beta_0$ not considered in (i)--(ii).
\item
A comparable pair $\{\alpha,\beta_*\}$ must satisfy $\beta_*<\alpha$;
otherwise,
$\beta_*\in A_*$ implies $\alpha<a_0$, contradicting $\alpha\in A_1$.
Then we have $P_{\beta_*}\preceq P_{\alpha}$ if $\beta_*<\alpha$.
\end{enumerate}
Therefore, we have established the desired monotonicity for all the comparable pairs.
\end{proof}

\bibliographystyle{plain}
\bibliography{monotonicity}

\end{document}